\newtheorem{observation}{Observation}
\@nx\else[{#1}]\fi}
\@nx\else[{#1}]\fi\else\csname #2\@xa\endcsname\fi}
\DeclareMathOperator{\dimth}{dim_{TH}}
\DeclareMathOperator{\covth}{cov_{TH}}
\DeclareMathOperator{\dimint}{dim_{INT}}
\DeclareMathOperator{\dimsplit}{dim_{SPLIT}}
\DeclareMathOperator{\boxi}{box}
\DeclareMathOperator{\tw}{tw}
\DeclareMathOperator{\pw}{pw}
\title{Bounding threshold dimension: realizing graphic Boolean functions as the AND of majority gates}
\author{Mathew C. Francis\inst{1}
\and Atrayee Majumder\inst{2}
\and Rogers Mathew\inst{3}}
\institute{Indian Statistical Institute, Chennai, India. \email{mathew@isichennai.res.in}\and
Department of Computer Science and Engineering, Indian Institute of Technology, Kharagpur, India. \email{atrayee.majumder@iitkgp.ac.in}\and
Department of Computer Science and Engineering, Indian Institute of Technology, Hyderabad, India. \email{rogers@cse.iith.ac.in}}
\titlerunning{Bounding threshold dimension}
\date{}
\begin{document}
\maketitle
\begin{abstract}
A graph $G$ on $n$ vertices is a \emph{threshold graph} if there exist real numbers $a_1,a_2, \ldots, a_n$ and $b$ such that the zero-one solutions of the linear inequality $\sum \limits_{i=1}^n a_i x_i \leq b$ are the characteristic vectors of the cliques of $G$. Introduced in [Chv{\'a}tal and Hammer, Annals of Discrete Mathematics, 1977], the \emph{threshold dimension} of a graph $G$, denoted by $\dimth(G)$, is the minimum number of threshold graphs whose intersection yields $G$. Given a graph $G$ on $n$ vertices, in line with Chv{\'a}tal and Hammer, $f_G\colon \{0,1\}^n \rightarrow \{0,1\}$ is the Boolean function that has the property that $f_G(x) = 1$ if and only if $x$ is the characteristic vector of a clique in $G$. A Boolean function $f$ for which there exists a graph $G$ such that $f=f_G$ is called a \emph{graphic} Boolean function. It follows that for a graph $G$, $\dimth(G)$ is precisely the minimum number of \emph{majority} gates whose AND (or conjunction) realizes the graphic Boolean function $f_G$. The fact that there exist Boolean functions which can be realized as the AND of only exponentially many majority gates motivates us to  study threshold dimension of graphs. We give tight or nearly tight upper bounds for the threshold dimension of a graph in terms of its treewidth, maximum degree, degeneracy, number of vertices, size of a minimum vertex cover, etc. We also study threshold dimension of random graphs and graphs with high girth. 
\begin{keywords}
Intersection dimension \and Threshold dimension \and Boxicity \and Threshold graphs \and Graphic Boolean function \and Majority gates \and Depth-2 circuits \and Treewidth \and Maximum degree \and Degeneracy \and Random graphs \and Minimum vertex cover
\end{keywords}
\end{abstract}
\section{Introduction}
All the graphs that are mentioned in this paper are finite, simple, and undirected. Given a graph $G=(V,E)$, we shall use $V(G)$ and $E(G)$ to denote the vertex set and edge set of $G$, respectively. For any $v \in V (G)$, we use $N_G(v)$ to denote the
neighborhood of $v$ in $G$, i.e., $N_G(v) = \{u \in V (G)\colon vu \in E(G)\}$. We use $N_G[v]$ to denote $N_G(v) \cup \{v\}$.   For any $S \subseteq V(G)$, 
we shall use $G[S]$ to denote the subgraph induced by the vertex set $S$ in $G$. We use $G-S$ to denote the graph $G[V(G) \setminus S]$. 
A subset of vertices in a graph forms a \emph{clique} if each pair of vertices in this subset has an edge between them; if no pair of vertices have an edge between them, then the subset is called an \emph{independent set}.
\subsubsection{Graphic Boolean functions}
Given a graph $G$ on $n$ vertices, we define the Boolean function $f_G\colon \{0,1\}^n \rightarrow \{0,1\}$ as follows: $\forall x \in \{0,1\}^n$, $f_G(x) = 1$ if and only if $x$ is the characteristic vector of a clique in $G$. A Boolean function $f$ such that there exists a graph $G$ for which $f_G=f$ is called a \emph{graphic} Boolean function. Graphic Boolean functions were defined by Chv\'atal and Hammer~\cite{chvtal1977aggregation} (they defined the Boolean function corresponding to a graph $G$ to be the function whose solutions are exactly the characteristic vectors of the independent sets of $G$; it is easy to see that this is the function $f_{\overline{G}}$ and hence this definition and the one that we gave above for graphic Boolean functions are equivalent). Below, we give a characterization of graphic Boolean functions due to Hammer and Mahadev~\cite{hammermahadev}.

\begin{proposition}[Hammer and Mahadev~\cite{hammermahadev}]
\label{2cnf}
A Boolean function on $n$ variables $x_1, x_2, \ldots , x_n$ is graphic if and only if it can be written in conjunctive normal form where each clause is of the form $(\overline{x_i} \lor \overline{x_j})$, for some distinct $i,j \in [n]$.  
\end{proposition}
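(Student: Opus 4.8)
The plan is to exhibit, for each direction of the equivalence, an explicit translation between a graph and a "negative $2$-CNF" (a conjunction of clauses of the form $(\overline{x_i}\lor\overline{x_j})$), and then to verify that the two associated Boolean functions agree at every point of $\{0,1\}^n$.

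For the direction "graphic $\Rightarrow$ such a CNF", suppose $f = f_G$ for some graph $G$ on vertex set $[n]$. The key observation is that a $0$-$1$ vector $x$ is the characteristic vector of a clique of $G$ if and only if there is no pair of distinct, non-adjacent vertices $i,j$ with $x_i = x_j = 1$; indeed, a vertex subset $S$ fails to be a clique precisely when it contains two distinct non-adjacent vertices. Reading the constraint "$\lnot(x_i = x_j = 1)$" as the clause $(\overline{x_i}\lor\overline{x_j})$, we obtain that $f_G(x) = 1$ if and only if $x$ satisfies $\bigwedge_{\{i,j\}\notin E(G)}(\overline{x_i}\lor\overline{x_j})$, the conjunction ranging over the $2$-element non-edges of $G$ (an empty conjunction, identically $1$, when $G$ is complete). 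This CNF has exactly the prescribed form.

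For the converse, suppose $f$ is written as $\bigwedge_{k}(\overline{x_{i_k}}\lor\overline{x_{j_k}})$ with each $i_k\neq j_k$. Let $P\subseteq\binom{[n]}{2}$ be the set of unordered pairs $\{i,j\}$ that occur as some clause, and let $G$ be the graph on vertex set $[n]$ with edge set $\binom{[n]}{2}\setminus P$. For $x\in\{0,1\}^n$ with support $S=\{i : x_i = 1\}$, note that $x$ satisfies the CNF if and only if no pair in $P$ is contained in $S$, which holds if and only if every pair of distinct vertices of $S$ is an edge of $G$, i.e.\ $S$ is a clique of $G$. Hence $f = f_G$, so $f$ is graphic.

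There is essentially no hard step: the whole content is the observation that the defining condition of a clique is a conjunction of "forbidden pair" constraints, each a negative $2$-clause, together with the reverse reading of the same correspondence. The only points needing a word of care are that clauses must involve two \emph{distinct} variables (matching the fact that non-edges join distinct vertices), that repeated or reordered clauses collapse to the same forbidden pair, and the degenerate case of the empty conjunction, which corresponds to the complete graph $K_n$ and to the constant function $1$ (consistent with $\emptyset$ always being a clique).
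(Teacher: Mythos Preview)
Your proof is correct and follows essentially the same approach as the paper: both directions use the dictionary ``non-edge $\leftrightarrow$ clause $(\overline{x_i}\lor\overline{x_j})$'' and verify that $f_G$ agrees with the resulting negative $2$-CNF pointwise. Your write-up is in fact more explicit than the paper's, which merely asserts that the relevant equalities ``can be verified''.
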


\begin{proof}
Given a graph $G$ with vertex set $[n]$, it can be verified that $$f_G = \land_{i,j \in [n]\colon i\neq j,\,ij \notin E(G)}(\overline{x}_i \lor \overline{x}_j).$$ Given an $S \subseteq [n] \times [n]$ and a Boolean function $f = \land_{(i,j) \in S\colon i \neq j}(\overline{x}_i \lor \overline{x}_j)$, consider the graph $G$ on vertex set $[n]$ such that $ij \in E(G)$ if and only if $(\overline{x}_i \lor \overline{x}_j)$ is not a clause in $f$. It can be seen that $f=f_G$.\hfill\qed
\end{proof}
\subsubsection{Majority gates and LTFs}
A \emph{majority gate} is a logic gate that produces an output of 1 if and only if at least half of its input bits are 1. It can be easily seen that an AND or OR gate can be realized using a majority gate by the addition of a suitable number of hardcoded input bits.
A Boolean function $f\colon \{0,1\}^n\rightarrow \{0,1\}$ is called a \emph{Linear Threshold Function} (LTF) if there exists a linear inequality $I\colon \sum \limits_{i=1}^n a_i x_i \leq b$ on variables $x_1,x_2,\ldots,x_n$ such that $\forall x=(x_1,x_2,\ldots,x_n) \in \{0,1\}^n$, $f(x) = 1$ if and only if $x$ satisfies $I$. We say that the linear inequality $I$ ``represents'' $f$.
It is well known that every LTF can be represented by a linear inequality in which the coefficients 
$a_1,a_2,\ldots,a_n,b$ are integers (from here onward, a linear inequality representing an LTF shall be implicitly assumed to have integer coefficients). This implies the well known fact that every LTF can be realized using a majority gate by wire duplication.
Conversely, it is easy to see that any Boolean function that can be realized using a majority gate
is an LTF.

\subsubsection{Threshold graphs}
A graph $G$ on $n$ vertices is a \emph{threshold graph} if there exist real numbers $a_1,a_2, \ldots, a_n$ and $b$ such that the zero-one solutions of the linear inequality $\sum \limits_{i=1}^n a_i x_i \leq b$ are the characteristic vectors of the cliques of $G$. This implies that $G$ is a threshold graph if and only if $f_G$ is an LTF. Since LTFs are exactly the Boolean functions that can be realized using a majority gate, we can equivalently say that a graph $G$ is a threshold graph if and only if $f_G$ can be realized using a majority gate. 
Chv\'atal and Hammer~\cite{chvtal1977aggregation} showed that threshold graphs are exactly the graphs that contain no induced subgraph isomorphic to $2K_2$, $P_4$ or $C_4$ (the graph with four vertices and two disjoint edges, the path on four vertices and the cycle on four vertices respectively). Thus, the complement of a threshold graph is also a threshold graph, implying that one can replace `cliques' with `independent sets' in the  definition of a threshold graph.  
The complete graph on $n$ vertices is a threshold graph with the corresponding linear inequality being $\sum\limits_{i=1}^n x_i\leq n$. Similarly, the star graph $K_{1,n-1}$ is a threshold graph, as shown by the linear inequality $x_1+\sum\limits_{i=2}^n (n-1)x_i\leq n$.
For a graph $G$, the characteristic vectors of the subsets of $V(G)$ correspond to the corners of the $n$-dimensional hypercube. Thus, a graph $G$ is threshold if and only if there is a hyperplane in $\mathbb{R}^n$ that separates the corners of the $n$-dimensional hypercube that correspond to the cliques of $G$ from the other corners of the hypercube. 
Threshold graphs, which find applications in integer programming and set packing problems, were introduced by Chv{\'a}tal and Hammer~\cite{chvtal1977aggregation}. 
Refer to the book \cite{golumbic2004algorithmic} by Golumbic to know more about the different properties of threshold graphs. A more comprehensive study of threshold graphs can be found in the book~\cite{mahadev1995threshold} by Mahadev and Peled. 

The following equivalent characterization of threshold graphs (Corollary~1B in~\cite{chvtal1977aggregation}) will be useful for us.
\begin{proposition}[Chv{\'a}tal and Hammer~\cite{chvtal1977aggregation}]
\label{defn_neighborhood}
$G$ is a threshold graph if and only if there is a partition of $V(G)$ into an independent set $A$ and a clique $B$, and an ordering $u_1, u_2, \ldots, u_k$ of $A$ such that
$N_G(u_k) \subseteq N_G(u_{k-1}) \subseteq \cdots \subseteq N_G(u_1)$.
\end{proposition}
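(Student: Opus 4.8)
The plan is to prove both directions straight from the definition of a threshold graph, i.e., from the existence of real weights $a_v$ ($v\in V(G)$) and a bound $b$ such that the $0/1$-solutions of $\sum_v a_v x_v\le b$ are exactly the characteristic vectors of the cliques of $G$; the forbidden subgraph description ($G$ has no induced $2K_2$, $P_4$, or $C_4$) recalled above could be substituted at a couple of points, but I will not need it.

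For the forward implication I would induct on $|V(G)|$, the crux being the lemma that every threshold graph on at least two vertices has an isolated vertex or a universal (dominating) vertex. Fixing a representing inequality and relabelling so that $a_{v_1}\le a_{v_2}\le\cdots\le a_{v_n}$, the pair $\{v_1,v_n\}$ is the ``hardest'' pair to be a clique: since $\{u,w\}$ is a clique iff $a_u+a_w\le b$, if $a_{v_1}+a_{v_n}\le b$ then $a_{v_1}+a_w\le b$ for every $w\ne v_1$ and $v_1$ is universal, whereas if $a_{v_1}+a_{v_n}>b$ then $a_w+a_{v_n}>b$ for every $w\ne v_n$ and $v_n$ is isolated. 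Deleting such a vertex $z$ and restricting the inequality to the remaining coordinates (same $b$) represents $f_{G-z}$, so $G-z$ is a threshold graph, and the induction hypothesis supplies a partition of $V(G-z)$ into an independent set $A'=\{u_1,\dots,u_k\}$ and a clique $B'$ with $N_{G-z}(u_1)\supseteq\cdots\supseteq N_{G-z}(u_k)$. If $z$ was isolated I take $B=B'$ and append $z$ to the ordering of $A'$, its neighbourhood being empty; if $z$ was universal I take $A=A'$ with the same ordering and $B=B'\cup\{z\}$, noting that adjoining $z$ to every $N_{G-z}(u_i)$ keeps the sets nested. In each case $A$ remains independent and $B$ a clique, completing the step.

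For the converse I would build an explicit representing inequality from the given partition. Write $A=\{u_1,\dots,u_k\}$ with $N_G(u_1)\supseteq\cdots\supseteq N_G(u_k)$; since $A$ is independent these neighbourhoods all lie in $B$, so the nesting lets me order $B=\{w_1,\dots,w_m\}$ so that $N_G(u_i)=\{w_1,\dots,w_{d_i}\}$ where $d_i=|N_G(u_i)|$. A vertex set $S$ is then a clique exactly when $|S\cap A|\le 1$ and, whenever $S\cap A=\{u_i\}$, one has $S\cap B\subseteq\{w_1,\dots,w_{d_i}\}$. I would assign $a_{w_j}=2^{j}$ (so that a subset of $B$ ``spills past'' $w_{d_i}$ precisely when its weight exceeds $2^{d_i+1}-2$), pick a bound $b$ of size about $2^{m}$, and set $a_{u_i}=b-(2^{d_i+1}-2)$, so that selecting $u_i$ reduces the available budget to exactly what a permissible $S\cap B$ can consume, while $b$ being large forces any two vertices of $A$ together to exceed $b$. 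Checking that $\sum_v a_v x_v\le b$ cuts out precisely the clique characteristic vectors is then a brief case split on $|S\cap A|\in\{0,1,\ge 2\}$.

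The arithmetic in the converse is routine; the delicate part is the bookkeeping in the inductive step of the forward direction — in particular pinning down that a globally minimum-weight vertex is \emph{universal} (not merely of maximum degree), and checking that re-inserting the deleted vertex respects both the independence of $A$ and the chain $N_G(u_1)\supseteq\cdots\supseteq N_G(u_k)$, where the open/closed neighbourhood distinction in the universal case must be handled correctly.
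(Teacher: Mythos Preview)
The paper does not supply its own proof of this proposition; it is quoted as Corollary~1B of Chv\'atal and Hammer and used as a black box, so there is no argument in the paper to compare yours against.

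On its own merits your plan is sound. The forward direction via the isolated/universal-vertex lemma and induction is the standard route and your weight argument for that lemma is clean (though calling $\{v_1,v_n\}$ the ``hardest'' pair is a slip of phrasing---it is neither the maximum- nor minimum-weight pair---your actual case split on $a_{v_1}+a_{v_n}\lessgtr b$ is what matters and is correct). The re-insertion of $z$ into the inductive partition is handled correctly in both cases.

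For the converse, your construction works, but be careful with the size of $b$: ``about $2^m$'' is not enough. If some $u_i$ has $d_i=m$ (i.e.\ is adjacent to all of $B$) and you take $b=2^{m+1}-2=\sum_{j=1}^m 2^j$, then $a_{u_i}=0$, and two such vertices of $A$ would together have weight $0\le b$, falsely certifying a clique. You need $b$ strictly larger than $2(2^{m+1}-2)$; taking, say, $b=2^{m+2}$ gives $a_{u_i}\ge 2^{m+1}+2$ for every $i$, so any two $A$-vertices already exceed $b$, and the remaining case checks go through exactly as you describe. With that adjustment the argument is complete.
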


\subsubsection{Threshold dimension}
If $G_1, G_2,\ldots,G_k$ are graphs on the same vertex set as $G$ such that $E(G) = E(G_1) \cap E(G_2) \cap \cdots \cap E(G_k)$, then we say that $G=G_1\cap G_2\cap\cdots\cap G_k$. In a similar way, if $E(G) = E(G_1) \cup E(G_2) \cup \cdots \cup E(G_k)$, then we say that $G=G_1\cup G_2\cup\cdots\cup G_k$. 
Given a class $\mathcal{A}$ of graphs, Kratochv\'il and Tuza~\cite{kratotuza} defined the \emph{$\mathcal{A}$-dimension} of a graph $G$, denoted as $\dim_{\mathcal{A}}(G)$, to be the minimum integer $k$ such that there exist $k$ graphs in $\mathcal{A}$ whose intersection is $G$. 
Let $\mathrm{TH}$ denote the class of threshold graphs. Chacko and Francis~\cite{chacko2020representing} studied the parameter $\dimth(G)$ of a graph $G$, which in the language of~\cite{kratotuza}, can be called the \emph{threshold dimension} of $G$. 
\begin{definition}[Threshold dimension]
The threshold dimension of a graph $G$, denoted by $\dimth(G)$, is the smallest integer $k$ for which there exist threshold graphs $G_1,G_2,\ldots,G_k$ such that $G=G_1\cap G_2\cap\cdots\cap G_k$.
\end{definition}

Let $f\colon \{0,1\}^n \rightarrow \{0,1\}$ be a Boolean function. Let $\gamma(f)$ denote the minimum number of LTFs whose AND (or conjunction) realizes $f$, or equivalently, the minimum number of majority gates in a depth-2 circuit realizing $f$ whose first layer consists of only majority gates and second layer consists of a single output AND gate. 
 Chv\'atal and Hammer proved the following theorem connecting the parameters $\gamma(f_G)$ and $\dimth(G)$ for a graph $G$.

\begin{theorem}[Chv{\'a}tal and Hammer~\cite{chvtal1977aggregation}]
\label{thm:chvham}
\label{thm_alpha_dimth}
For a graph $G$, $\gamma(f_G) = \dimth(G)$.
\end{theorem}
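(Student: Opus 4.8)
The plan is to prove the two inequalities $\gamma(f_G)\le\dimth(G)$ and $\dimth(G)\le\gamma(f_G)$ separately. For the first one, suppose $G=H_1\cap H_2\cap\cdots\cap H_k$ with $k=\dimth(G)$ and each $H_i$ a threshold graph. Since a pair of vertices is an edge of $G$ if and only if it is an edge of every $H_i$, a set $S\subseteq V(G)$ is a clique of $G$ if and only if $S$ is a clique of each $H_i$. Hence for every $x\in\{0,1\}^n$, the vector $x$ is the characteristic vector of a clique of $G$ precisely when it is the characteristic vector of a clique of $H_i$ for every $i$; that is, $f_G=f_{H_1}\land f_{H_2}\land\cdots\land f_{H_k}$. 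Each $f_{H_i}$ is an LTF because $H_i$ is a threshold graph, so $f_G$ is realized as the AND of $k$ LTFs, giving $\gamma(f_G)\le k=\dimth(G)$.

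For the reverse inequality, write $f_G=g_1\land g_2\land\cdots\land g_m$ with $m=\gamma(f_G)$, where each $g_i$ is an LTF represented by a linear inequality $I_i\colon\sum_{j=1}^n a^{(i)}_j x_j\le b_i$ with integer coefficients. For each $i$ I would build a graph $H_i$ on $V(G)=[n]$ by declaring, for distinct $j,\ell\in[n]$, that $j\ell\in E(H_i)$ if and only if $a^{(i)}_j+a^{(i)}_\ell\le b_i$. The theorem then follows once we check (a) $G=H_1\cap H_2\cap\cdots\cap H_m$ and (b) each $H_i$ is a threshold graph. For (a): if $j\ell\in E(G)$ then $\{j,\ell\}$ is a clique of $G$, so $f_G(e_j+e_\ell)=1$, hence $g_i(e_j+e_\ell)=1$ and thus $a^{(i)}_j+a^{(i)}_\ell\le b_i$ for all $i$, i.e.\ $j\ell\in E(H_i)$ for all $i$; conversely, if $j\ell\notin E(G)$ then $f_G(e_j+e_\ell)=0$, so $g_i(e_j+e_\ell)=0$ for some $i$, whence $a^{(i)}_j+a^{(i)}_\ell>b_i$ and $j\ell\notin E(H_i)$. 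Therefore $E(G)=\bigcap_i E(H_i)$.

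The main obstacle is part (b): showing that the graph $H_i$, defined purely from the coefficients of $I_i$, is a threshold graph. I plan to verify this via the characterization (recalled in the introduction) that threshold graphs are exactly the graphs with no induced $2K_2$, $P_4$, or $C_4$; alternatively one can exhibit the structure of Proposition~\ref{defn_neighborhood} directly after sorting the coefficients. Fixing $i$ and writing $a_j=a^{(i)}_j$, $b=b_i$: an induced $2K_2$ with edges $pq,rs$ would give $a_p+a_q\le b$ and $a_r+a_s\le b$, hence $a_p+a_q+a_r+a_s\le 2b$, while the four strict non-edge relations $a_p+a_r>b$, $a_p+a_s>b$, $a_q+a_r>b$, $a_q+a_s>b$ sum to $a_p+a_q+a_r+a_s>2b$, a contradiction; an induced $C_4$ is ruled out by the identical counting (the two non-edges force a sum $>2b$, the four edges force it $\le 2b$); and an induced $P_4$ on $p-q-r-s$ would force simultaneously $a_p+a_r>b\ge a_p+a_q$, so $a_r>a_q$, and $a_q+a_s>b\ge a_r+a_s$, so $a_q>a_r$, again impossible. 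This shows each $H_i$ is threshold, and together with (a) we get $\dimth(G)\le m=\gamma(f_G)$.

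The conceptual point behind the hard direction is that an arbitrary LTF appearing in an optimal AND-of-LTFs representation of $f_G$ can be ``rounded down'' to the graphic LTF $f_{H_i}$ without changing the conjunction: the construction of $H_i$ from $I_i$ above is exactly this rounding, and the only thing that has to be argued is that the rounded object is still an LTF, i.e.\ that $H_i$ is a threshold graph, which is the step I expect to require the small case analysis above.
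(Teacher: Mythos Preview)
Your proof is correct and follows essentially the same approach as the paper: both directions are argued identically, with the graphs $H_i$ defined exactly as the paper's $G_\ell$ (since $g_i(v^{j,\ell})=1$ iff $a^{(i)}_j+a^{(i)}_\ell\le b_i$). The only difference is that the paper merely asserts ``It can be seen that $G_\ell$ cannot contain a $2K_2$, $P_4$ or $C_4$'', whereas you supply the short arithmetic verification of this fact; your version is thus a strict elaboration of the paper's argument rather than an alternative route.
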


\begin{proof}
	Let $[n]$ be the vertex set of $G$. For any distinct $i,j \in [n]$, we use $v^{i,j}$ to denote the $n$-bit $0$-$1$ vector which has a $1$ only at the $i$th and $j$th bit positions. 
	
	Suppose $\dimth(G) = k$. Then there exist $k$ threshold graphs, namely $G_1,$  $G_2, \ldots , G_k$ such that $G = G_1 \cap G_2 \cap \cdots \cap G_k$. We know that corresponding to each threshold graph $G_{\ell}$, for $1\leq\ell\leq k$, there is an LTF $f_{\ell}$ such that $f_{G_{\ell}} = f_{\ell}$. It is not difficult to see that $f_G = \land_{\ell = 1}^k f_{\ell}$ and therefore $\gamma(f_G) \leq \dimth(G)$. 
	
	To prove that $\dimth(G) \leq \gamma(f_G)$, assume $\gamma(f_G) = k$. Then there exist $k$ LTFs $f_1, \ldots , f_k$ such that $f_G = \land_{\ell = 1}^k f_{\ell}$. For each $\ell \in [k]$, we construct a threshold graph $G_{\ell}$ as described below. For every distinct $i,j \in [n]$, we let $ij \in E(G_{\ell})$ if and only if $f_{\ell}(v^{i,j}) = 1$. It can be seen that $G_{\ell}$ cannot contain a $2K_2$, $P_4$ or $C_4$ as an induced subgraph, and hence is a threshold graph. It remains to show that $G = \cap_{\ell = 1}^k G_{\ell}$. Consider any distinct $i,j\in [n]$. Suppose that $ij \in E(G)$. Then $f_G(v^{i,j}) = f_1(v^{i,j}) = \cdots = f_k(v^{i,j}) = 1$. Our construction of $G_{\ell}$ ensures that $ij \in E(G_{\ell})$, for every $\ell \in [k]$. Next suppose that $ij \notin E(G)$. Then there exist some $\ell\in [k]$ such that $f_{\ell}(v^{i,j}) = 0$. Then, from our construction, $ij \notin E(G_{\ell})$. We have thus shown that $G = G_1 \cap \cdots \cap G_k$, and therefore $\dimth(G)\leq\gamma(f_G)$.
	\hfill\qed
\end{proof}

For any Boolean function $f$ on $n$ variables, $\gamma(f)\leq 2^n$ (since any Boolean function on $n$ variables can be realized using a depth-2 circuit in which the first layer contains at most $2^n$ OR gates and the second layer contains an AND gate --- which is just another way of saying that $f$ can be written in conjunctive normal form), and there are families of Boolean functions $\{f^1,f^2,\ldots\}$, where $f^i$ is a Boolean function on $i$ variables, for which $\gamma(f^n)$ is exponential in $n$~\cite{mahajan}. For a Boolean function $f$ on $n$ variables that can be expressed as a 2-CNF formula, the number of clauses in it is at most ${2n\choose 2}$, which means that $f$ can be realized using a depth-2 circuit containing at most $2n\choose 2$ majority gates. If further, $f$ is a graphic Boolean function, then the number of clauses when written in 2-CNF form is at most $n\choose 2$ (by Proposition~\ref{2cnf}), implying that $f$ can be realized using a depth-2 circuit containing at most $n\choose 2$ majority gates.
As for any graph $G$, we have $\gamma(f_G)=\dimth(G)\leq n$ (this can be seen as follows: for every vertex $u\in V(G)$, we define the graph $G_u$ on vertex set $V(G)$ and having edge set $\{xy\colon$  
$x,y\in V(G)\setminus\{u\}$ and $x \neq y\}\cup\{uv\colon v\in N_G(u)\}$; then $G=\bigcap_{u\in V(G)} G_u$ and each $G_u$ is a threshold graph), every graphic Boolean function on $n$ variables can be realized using a depth-2 circuit whose first layer contains at most $n$ majority gates. This can be improved further by deriving better upper bounds for threshold dimension (see for example, Corollary~\ref{cor:uppboundn}). Further, when the graphs corresponding to the graphic Boolean functions have some nice properties, we can show even better bounds on the number of majority gates required in a depth-2 circuit realizing the function.


Note that Chv\'atal and Hammer~\cite{chvtal1977aggregation} use the term ``threshold dimension'' of a graph $G$ with a slightly different meaning: they define it to be the minimum integer $k$ for which there exist threshold graphs $G_1,G_2,\ldots,G_k$ such that $G=G_1\cup G_2\cup\cdots\cup G_k$. We call this the \emph{threshold cover number} of $G$ and denote it by $\covth(G)$. Since the complement of a threshold graph is also a threshold graph, we have the following.
\begin{observation}
\label{obv_threshcov_threshdim}
For a graph $G$, $\covth(G) = \dimth(\overline{G})$. 
\end{observation}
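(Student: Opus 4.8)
The plan is to use two facts already available: De Morgan's law applied to edge sets, and the result of Chv\'atal and Hammer recalled above that the complement of a threshold graph is again a threshold graph. The key elementary observation is that if $G_1, G_2, \ldots, G_k$ are graphs on the common vertex set $V(G)$, then $\overline{G_1 \cup G_2 \cup \cdots \cup G_k} = \overline{G_1} \cap \overline{G_2} \cap \cdots \cap \overline{G_k}$, because a pair of vertices is a non-edge of a union of graphs exactly when it is a non-edge of each graph in the union; symmetrically, complementing an intersection representation yields a union representation.

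For the inequality $\dimth(\overline{G}) \le \covth(G)$, I would start from an optimal threshold cover $G = G_1 \cup \cdots \cup G_k$ with $k = \covth(G)$, where each $G_i$ is a threshold graph on vertex set $V(G)$. Taking complements and applying the observation above gives $\overline{G} = \overline{G_1} \cap \cdots \cap \overline{G_k}$, and each $\overline{G_i}$ is a threshold graph; hence $\overline{G}$ is the intersection of $k$ threshold graphs, so $\dimth(\overline{G}) \le k = \covth(G)$. For the reverse inequality, I would run the same argument starting from an optimal threshold intersection representation $\overline{G} = H_1 \cap \cdots \cap H_k$ with $k = \dimth(\overline{G})$: complementing gives $G = \overline{H_1} \cup \cdots \cup \overline{H_k}$ with each $\overline{H_i}$ a threshold graph, so $\covth(G) \le k = \dimth(\overline{G})$. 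Combining the two inequalities yields the claimed equality.

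I do not anticipate any real obstacle; the proof is essentially a one-line application of self-complementarity of the threshold class together with De Morgan. The only point to be careful about is bookkeeping: all complements must be taken with respect to the same fixed vertex set $V(G)$ so that the De Morgan identity on edge sets is exact, and the degenerate cases with $k \in \{0,1\}$ are handled by the same argument.
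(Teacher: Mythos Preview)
Your proof is correct and is exactly the argument the paper has in mind: the observation is stated immediately after the remark that the class of threshold graphs is closed under complementation, and your De Morgan computation with that closure property is precisely the intended justification.
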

For a graph $G$, let $\alpha(G)$, $\omega(G)$, and $\chi(G)$ denote the size of a maximum independent set, the size of a maximum clique, and the chromatic number of $G$, respectively. 
It was shown in~\cite{chvtal1977aggregation} that for every graph $G$ on $n$ vertices, $\covth(G) \leq n - \alpha(G)$. 
In the same paper, the authors  also showed that for every positive $\epsilon$, there is a graph $G$ on $n$ vertices such that $\covth(G) > (1-\epsilon)n$. 
Yannakakis~\cite{yannakakis1982complexity} showed that it is NP-complete to recognize graphs having threshold cover number at most $k$, for all fixed $k \geq 3$. Raschle and Simon~\cite{raschle1995recognition} showed that there is a polynomial time algorithm that recognizes graphs having threshold cover number at most $2$. 
Combining Observation \ref{obv_threshcov_threshdim} with the results due in~\cite{chvtal1977aggregation, yannakakis1982complexity, raschle1995recognition} mentioned above directly yields the following.
\begin{corollary}
\label{thm_follows_from_thr_cov}
~
\begin{enumerate}
\renewcommand{\theenumi}{\alph{enumi}}
\renewcommand{\labelenumi}{(\theenumi)}
\item For a graph $G$ on $n$ vertices, $\dimth(G) \leq n - \omega(G)$, where $\omega(G)$ denotes the size of a largest clique in $G$.
\item For every positive $\epsilon$, there is a graph $G$ on $n$ vertices such that $\dimth(G) > (1-\epsilon)n$.
\item It is NP-complete to recognize graphs having threshold dimension at most $k$, for all fixed $k \geq 3$.
\item There is a polynomial time algorithm that recognizes graphs having threshold dimension at most $2$.
\end{enumerate}
\end{corollary}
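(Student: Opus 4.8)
The plan is to obtain all four parts of the corollary from Observation~\ref{obv_threshcov_threshdim}, which says $\covth(H) = \dimth(\overline{H})$ for every graph $H$, by translating each of the quoted facts about the threshold cover number into the corresponding statement about the threshold dimension. The first step, for part~(a), is to apply the bound $\covth(H) \leq n - \alpha(H)$ of Chv\'atal and Hammer to $H = \overline{G}$ and note that a vertex subset is independent in $\overline{G}$ exactly when it is a clique in $G$, so $\alpha(\overline{G}) = \omega(G)$; Observation~\ref{obv_threshcov_threshdim} then gives $\dimth(G) = \covth(\overline{G}) \leq n - \alpha(\overline{G}) = n - \omega(G)$. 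For part~(b), given $\epsilon > 0$, I would invoke the Chv\'atal--Hammer construction of a graph $H$ on $n$ vertices with $\covth(H) > (1-\epsilon)n$ and take $G = \overline{H}$; this $G$ has $n$ vertices and, by Observation~\ref{obv_threshcov_threshdim}, satisfies $\dimth(G) = \covth(H) > (1-\epsilon)n$.

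For parts~(c) and~(d), the key observation is that $G \mapsto \overline{G}$ is a polynomial-time computable involution under which the property ``$\dimth(G) \leq k$'' corresponds to ``$\covth(\overline{G}) \leq k$''. Hence the decision problem ``is $\dimth(G) \leq k$?'' is polynomial-time equivalent to ``is $\covth(G) \leq k$?''. For part~(d) this immediately transports the polynomial-time algorithm of Raschle and Simon for $k = 2$: run it on $\overline{G}$. For part~(c), the same reduction transports Yannakakis's $\mathsf{NP}$-hardness for each fixed $k \geq 3$; to upgrade hardness to $\mathsf{NP}$-completeness I would additionally argue membership in $\mathsf{NP}$, using as a certificate a list of graphs $G_1,\ldots,G_k$ on vertex set $V(G)$, each checked to be a threshold graph in polynomial time (via the forbidden induced subgraphs $2K_2$, $P_4$, $C_4$, or via the nested-neighborhood characterization of Proposition~\ref{defn_neighborhood}), together with a verification that $\bigcap_{\ell=1}^k E(G_\ell) = E(G)$.

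The steps are essentially routine substitutions, so I do not expect a real obstacle; the only point needing care is in part~(c), where the complementation reduction by itself yields only $\mathsf{NP}$-hardness, and so the $\mathsf{NP}$-membership argument above must be supplied explicitly rather than being inherited from the cited result.
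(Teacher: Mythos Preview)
Your proposal is correct and follows essentially the same route as the paper, which simply states that the corollary is obtained by combining Observation~\ref{obv_threshcov_threshdim} with the cited results of Chv\'atal--Hammer, Yannakakis, and Raschle--Simon; you have merely spelled out the routine translations in more detail. One small remark on part~(c): your separate $\mathsf{NP}$-membership argument is fine but not strictly needed, since $\mathsf{NP}$ is closed under polynomial-time many-one reductions, and the map $G\mapsto\overline{G}$ already reduces ``$\dimth(G)\leq k$'' to the $\mathsf{NP}$ problem ``$\covth(\overline{G})\leq k$''.
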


We now give a lower bound on the threshold dimension of a graph.

\begin{proposition}
\label{prop:lowboundchi}
For a graph $G$, $\dimth(G)\geq\min\{\chi(G-C)\colon C$ is a clique of $G\}$.
\end{proposition}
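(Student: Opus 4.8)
The plan is to use the structural characterization of threshold graphs given in Proposition~\ref{defn_neighborhood}. Suppose $\dimth(G) = k$, so that there exist threshold graphs $G_1, G_2, \ldots, G_k$ on the vertex set $V(G)$ with $G = G_1 \cap G_2 \cap \cdots \cap G_k$; in particular $E(G) = \bigcap_{i=1}^k E(G_i)$, so each $G_i$ is a supergraph of $G$ on the same vertex set. By Proposition~\ref{defn_neighborhood}, for each $i \in [k]$ we can fix a partition $V(G) = A_i \cup B_i$ where $A_i$ is an independent set of $G_i$ and $B_i$ is a clique of $G_i$. The goal is to exhibit a single clique $C$ of $G$ with $\chi(G - C) \leq k$; this immediately gives $\min\{\chi(G-C)\colon C \text{ a clique of } G\} \leq k = \dimth(G)$.

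The natural candidate is $C := \bigcap_{i=1}^k B_i$. First I would check that $C$ is a clique of $G$: if $u, v$ are distinct vertices of $C$, then $u, v \in B_i$ for every $i$, and since $B_i$ is a clique of $G_i$ we get $uv \in E(G_i)$ for every $i$; hence $uv \in \bigcap_{i=1}^k E(G_i) = E(G)$. (If $C$ has at most one vertex it is trivially a clique, and the empty set is a clique as well, so there is no degenerate case to worry about.)

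Next I would analyze $G - C$. A vertex lies outside $C$ precisely when it fails to lie in some $B_i$, so $V(G) \setminus C = \bigcup_{i=1}^k A_i$ (using that $A_i \cup B_i = V(G)$). For each $i$, the set $A_i$ is independent in $G_i$, and since $E(G) \subseteq E(G_i)$ the set $A_i$ is also independent in $G$. Thus $V(G) \setminus C$ is the union of $k$ independent sets of $G$; assigning to each vertex $v \notin C$ the smallest index $i$ with $v \in A_i$ yields a proper $k$-colouring of $G - C$, so $\chi(G - C) \leq k$. Combined with the previous paragraph this proves the inequality.

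There is essentially no hard obstacle here once one recalls the independent-set/clique partition characterization of threshold graphs; the only point that requires care is keeping straight the direction of containment $E(G) \subseteq E(G_i)$, which is exactly what lets both the ``clique'' conclusion about $C$ and the ``independent set'' conclusion about each $A_i$ transfer from the $G_i$'s down to $G$.
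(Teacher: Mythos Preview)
Your proof is correct and follows essentially the same approach as the paper: partition each threshold supergraph into an independent set $A_i$ and a clique $B_i$, take $C=\bigcap_i B_i$ as the clique of $G$, and use $V(G)\setminus C=\bigcup_i A_i$ to $k$-colour $G-C$. The only difference is that you spell out slightly more detail (why $C$ is a clique of $G$ and the explicit colouring rule), but the argument is the same.
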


\begin{proof}
Suppose that $G$ is a graph and $G_1,G_2,\ldots,G_k$ are threshold graphs such that $G=G_1\cap G_2\cap\cdots\cap G_k$. By Proposition~\ref{defn_neighborhood}, we have that for each $i\in [k]$, there is a partition of $V(G_i)$ into an independent set $A_i$ and a clique $B_i$.
It is not difficult to see that $B=B_1\cap B_2\cap\cdots\cap B_k$ is a clique of $G$, and each $A_i$, for $i\in [k]$, is an independent set of $G$. Since $V(G)\setminus B=A_1\cup A_2\cup\cdots\cup A_k$, we have that $V(G)\setminus B$ is the union of $k$ independent sets of $G$. This implies that $V(G)\setminus B$ can also be partitioned into $k$ independent sets of $G$, and therefore $\chi(G-B)\leq k$. Thus there always exists a clique $B$ in $G$ such that $k\geq\chi(G-B)$. This completes the proof.\hfill\qed
\end{proof}
Note that the above proposition actually gives a lower bound on $\dimsplit(G)$, where $\mathrm{SPLIT}$ is the class of ``split graphs'' --- the graphs whose vertex set can be partitioned into an independent set and a clique --- of which the class of threshold graphs is a subclass.

A graph is an \emph{interval graph} if there is a mapping from the set of vertices of the graph to the set of closed intervals on the real line such that two vertices in the graph are adjacent to each other if and only if the intervals they are mapped to have a non-empty intersection. Let $\mathrm{INT}$ denote the class of interval graphs. The parameter $\dimint(G)$ is more commonly known as the \emph{boxicity} of the graph $G$ and denoted as $\boxi(G)$. 
It is known that threshold graphs form a subclass of the class of interval graphs.  
This implies the following.
\begin{observation}  
\label{obv_thdim_box}
For a graph $G$, $\boxi(G) \leq \dimth(G)$. 
\end{observation}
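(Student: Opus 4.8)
The plan is to derive this from the general monotonicity of the $\mathcal{A}$-dimension parameter under containment of graph classes, combined with the fact (recalled immediately above the statement) that every threshold graph is an interval graph. No clever construction is needed; the content is purely that $\dimth$ and $\boxi$ are instances of the same parameter $\dim_{\mathcal{A}}$ applied to nested classes.

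First I would isolate the elementary principle: if $\mathcal{A}$ and $\mathcal{B}$ are classes of graphs with $\mathcal{A}\subseteq\mathcal{B}$, then $\dim_{\mathcal{B}}(G)\leq\dim_{\mathcal{A}}(G)$ for every graph $G$. The reason is that any collection of graphs from $\mathcal{A}$, on the vertex set of $G$, whose intersection equals $G$, is in particular such a collection of graphs from $\mathcal{B}$; hence every feasible witness for $\dim_{\mathcal{A}}(G)$ is also a feasible witness for $\dim_{\mathcal{B}}(G)$, and taking minima can only make the value drop. Then I would instantiate this with $\mathcal{A}=\mathrm{TH}$ and $\mathcal{B}=\mathrm{INT}$: since $\mathrm{TH}\subseteq\mathrm{INT}$, we get $\dimint(G)\leq\dimth(G)$, and as $\boxi(G)=\dimint(G)$ by definition, this is exactly the claim. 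Spelling it out, if $k=\dimth(G)$ and $G_1,\ldots,G_k$ are threshold graphs with $G=G_1\cap\cdots\cap G_k$, then each $G_i$ is an interval graph, so $G$ is realized as the intersection of $k$ interval graphs and $\boxi(G)\leq k$.

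There is no real obstacle here. The only external ingredient is the inclusion $\mathrm{TH}\subseteq\mathrm{INT}$, which is classical: a threshold representation as in Proposition~\ref{defn_neighborhood}, with the clique $B$ and the independent set $A$ ordered so that $N_G(u_k)\subseteq\cdots\subseteq N_G(u_1)$, is readily turned into an interval representation by assigning the vertices of $B$ intervals that all share a common point and the vertices of $A$ suitably nested intervals reflecting the neighbourhood containment. I would either cite this or include the one-line verification, after which the observation follows immediately.
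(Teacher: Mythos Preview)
Your proposal is correct and is exactly the paper's reasoning: the paper simply notes that threshold graphs form a subclass of interval graphs and states that Observation~\ref{obv_thdim_box} follows immediately, which is precisely the monotonicity-of-$\dim_{\mathcal{A}}$ argument you spelled out. You have only made explicit what the paper leaves implicit.
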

The graph parameter `boxicity' was introduced by Roberts \cite{Roberts} in  1969 and, since then, it has been extensively studied (see \cite{CHANDRAN2007733, AdiSunRog, tech-rep, boxMinor, Kratochvil, DiptAdigaHardness}). 
We will see how Observation \ref{obv_thdim_box} helps us get tight examples  to various bounds we prove for threshold dimension in this paper. Chacko and Francis~\cite{chacko2020representing} gave the following upper bound for the threshold dimension of a graph $G$ in terms of its boxicity and chromatic number.
\begin{theorem}[Theorem 19 in \cite{chacko2020representing}]
\label{thm_mathew_chacko_box_threshdim}
For a graph $G$, $\dimth(G) \leq \boxi(G)\cdot \chi(G)$.
\end{theorem}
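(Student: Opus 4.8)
The plan is to feed a box representation of $G$ into a proper colouring of $G$ and manufacture one threshold graph per (coordinate, colour) pair. Write $b=\boxi(G)$ and $c=\chi(G)$, fix a representation $G=I_1\cap\cdots\cap I_b$ with each $I_j$ an interval graph on $V=V(G)$, fix an interval representation of each $I_j$, and fix a proper colouring of $G$ with colour classes $V_1,\ldots,V_c$. For each $j\in[b]$ and $a\in[c]$ I will build a graph $T_{j,a}$ on $V$ as follows: fixing $j$, write the interval assigned to a vertex $v$ in $I_j$ as $[l_v,r_v]$; now let $V\setminus V_a$ be a clique of $T_{j,a}$, let $V_a$ be independent in $T_{j,a}$, and join $w\in V_a$ to $x\in V\setminus V_a$ precisely when $l_x\le r_w$. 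The claim to establish is that $G=\bigcap_{j\in[b],\,a\in[c]}T_{j,a}$, an intersection of $bc$ threshold graphs.

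The verification then splits into three short checks. First, each $T_{j,a}$ is a threshold graph: I apply Proposition~\ref{defn_neighborhood} with independent set $A=V_a$ and clique $B=V\setminus V_a$, ordering the vertices of $V_a$ by non-increasing right endpoint, since $N_{T_{j,a}}(w)=\{x\in B:\ l_x\le r_w\}$ for $w\in V_a$ and these sets are nested along that order. Second, $E(G)\subseteq E(T_{j,a})$ for every $j,a$: an edge $uv$ of $G$ joins two different colour classes, so it is either an edge inside the clique $B$, or it has (say) $u\in V_a$ and $v\in B$; in the latter case $uv\in E(G)\subseteq E(I_j)$ forces the intervals of $u$ and $v$ in $I_j$ to meet, so $l_v\le r_u$ and hence $uv\in E(T_{j,a})$. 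Third, every non-edge of $G$ is a non-edge of some $T_{j,a}$: if $uv\notin E(G)$ then $uv\notin E(I_j)$ for some $j$, so the intervals of $u$ and $v$ in $I_j$ are disjoint, say $r_u<l_v$; letting $a$ be the colour of $u$, the vertex $v$ is not a neighbour of $u$ in $T_{j,a}$, either because $v\in V_a$ as well, or because $v\in B$ but $l_v>r_u$. The three checks together give $G=\bigcap_{j,a}T_{j,a}$, hence $\dimth(G)\le bc$.

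I expect the real (though modest) obstacle to be finding the right definition of $T_{j,a}$, and in particular the right choice of colouring. The naive move is to decompose each $I_j$ on its own, which would cost $\chi(I_j)$ threshold graphs per coordinate; but $\chi(I_j)$ is uncontrolled (it can be as large as $n$), while we are only permitted $\chi(G)$. The resolution is to colour $G$ rather than the $I_j$'s and to exploit the fact that $\{x:\ l_x\le r_w\}$ is nested in $w$ regardless of which colour classes its elements occupy, so $T_{j,a}$ is still threshold even though $V_a$ need not be independent in $I_j$ — and this is harmless because $T_{j,a}$ only has to dominate the edges of $G$, not those of $I_j$. The only low-level point is the trivial remark that two disjoint closed intervals are strictly separated, which is exactly what makes the non-edges in the third check genuinely vanish; no general-position assumption on the endpoints is needed. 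Once the construction is written down, the remainder is the routine case analysis above.
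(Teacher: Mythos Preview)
The paper does not prove this theorem; it merely quotes it as Theorem~19 of~\cite{chacko2020representing}. So there is no in-paper proof to compare against, and the relevant question is simply whether your argument is correct. It is. All three checks go through as you state them: the nesting of $\{x\in B:l_x\le r_w\}$ in $r_w$ gives the threshold structure via Proposition~\ref{defn_neighborhood}; the supergraph property uses only that $uv\in E(G)\subseteq E(I_j)$ forces $l_v\le r_u$ and $l_u\le r_v$; and for a non-edge, disjointness of closed intervals gives a strict inequality $r_u<l_v$ (after relabelling), so taking $a$ to be the colour of the vertex with the smaller right endpoint kills the pair in $T_{j,a}$. Your remark that one must colour $G$ rather than $I_j$ is exactly the point of the construction, and your observation that $V_a$ need not be independent in $I_j$ is handled correctly because you build $T_{j,a}$ by hand rather than by invoking the $\tau$-operator of Definition~\ref{def:thsupgraph}.
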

We note here that the above upper bound is tight, as shown by the following observation, which also shows that the threshold dimension of a graph cannot be bounded by any function of its boxicity.

\begin{proposition}
\label{obv_int_dimth}
There is an interval graph $G$ for which $\dimth(G)=\chi(G)=|V(G)|/2$.
\end{proposition}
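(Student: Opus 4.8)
The plan is to exhibit an explicit family. For each integer $k\ge 2$, take $G$ to be the disjoint union of two cliques on $k$ vertices each, i.e.\ $G=K_k\sqcup K_k$, a graph on $n=2k$ vertices. I would then verify the three required facts in turn. That $G$ is an interval graph is immediate: realize the two cliques on disjoint stretches of the real line, each $K_k$ by a bunch of pairwise overlapping (say, nested) intervals; consequently $\boxi(G)=1$. That $\chi(G)=k=|V(G)|/2$ is also immediate, since each component is a $K_k$ and hence needs exactly $k$ colours, while $k$ colours clearly suffice for all of $G$, and $|V(G)|=2k$.

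The only substantive point is the two-sided estimate on $\dimth(G)$. For the upper bound I would invoke Theorem~\ref{thm_mathew_chacko_box_threshdim}, getting $\dimth(G)\le \boxi(G)\cdot\chi(G)=1\cdot k=k$; alternatively one can use Corollary~\ref{thm_follows_from_thr_cov}(a), which gives $\dimth(G)\le |V(G)|-\omega(G)=2k-k=k$. For the matching lower bound I would use Proposition~\ref{prop:lowboundchi}. Since $G$ is disconnected with its two copies of $K_k$ as the connected components, and a clique is connected, every clique $C$ of $G$ lies entirely inside one of the two components; therefore $G-C$ still contains the other component $K_k$ untouched, so $\omega(G-C)\ge k$ and hence $\chi(G-C)\ge k$. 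Taking the minimum over all cliques $C$ yields $\dimth(G)\ge\min_C\chi(G-C)\ge k$. Combining the two bounds gives $\dimth(G)=k=\chi(G)=|V(G)|/2$, as required.

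I do not expect a genuine obstacle here: the whole content is the observation that, for a disjoint union of cliques, no single clique can ``hit'' both components, which forces the generic lower bound of Proposition~\ref{prop:lowboundchi} up to $\chi(G)$. The only minor care needed is with boxicity conventions for complete graphs, but this is a non-issue because $G$ is not complete for $k\ge 2$, so $\boxi(G)=1$ holds unambiguously. As a side benefit, this same family ($\boxi(G_k)=1$ while $\dimth(G_k)=k\to\infty$) simultaneously witnesses that $\dimth$ cannot be bounded by any function of boxicity and that the bound of Theorem~\ref{thm_mathew_chacko_box_threshdim} is tight, matching the surrounding discussion.
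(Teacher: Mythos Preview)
Your proposal is correct and essentially identical to the paper's proof: the paper also takes $G=2K_n$, invokes Proposition~\ref{prop:lowboundchi} for the lower bound (since removing any clique leaves a copy of $K_n$), and Theorem~\ref{thm_mathew_chacko_box_threshdim} for the upper bound. Your write-up is a bit more explicit about the interval representation and the boxicity convention, and you note the alternative upper bound via Corollary~\ref{thm_follows_from_thr_cov}(a), but the argument is the same.
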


\begin{proof}
Consider the graph $2K_n$. This graph is clearly an interval graph, and removing any clique from this graph results in a graph that contains a clique of $n$ vertices. Thus by Proposition~\ref{prop:lowboundchi}, we have that $\dimth(2K_n)\geq n=\chi(2K_n)$. Theorem~\ref{thm_mathew_chacko_box_threshdim} implies $\dimth(2K_n)\leq n$.\hfill\qed
\end{proof}

In this paper, we prove tighter upper bounds for the threshold dimension of a graph that cannot be obtained from Theorem \ref{thm_mathew_chacko_box_threshdim} by plugging in known upper bounds for boxicity.
\subsection{Our results}
Let $G$ be a graph with $n$ vertices. Let $\Delta$ denote the maximum degree of a vertex in $G$ and let $\tw(G)$ denote the treewidth of $G$. Let $\alpha(G)$ and $\omega(G)$ denote the sizes of a maximum independent set and a maximum clique, respectively, in $G$. We prove the following results. 
\begin{enumerate}
\item Chandran and Sivadasan~\cite{CHANDRAN2007733} showed that for any graph $G$, $\boxi(G)\leq\tw(G)+2$. Chacko and Francis~\cite{chacko2020representing} note that for any graph $G$, $\dimth(G)\leq (\tw(G)+1)(\tw(G)+2)$ and ask if the threshold dimension of every graph can be bounded by a linear function of its treewidth. In Section \ref{sec_treewidth}, we answer this question in the affirmative by showing that $\dimth(G) \leq 2(\tw(G) +1)$. We show that this bound is tight up to a multiplicative factor of 2. Co-comparability graphs, AT-free graphs, and chordal graphs are known to have $O(\Delta)$ upper bounds on their treewidth. 
We thus get an $O(\Delta)$ upper bound to the threshold dimension of such graphs.

\item  Let $\dimth(\Delta) := \max \{\dimth(G)\colon G$ is a graph having maximum degree $\Delta \}$. In Section \ref{sec_maxdegree}, we show that $\dimth(\Delta) = O(\Delta \ln^{2+o(1)}\Delta)$. It was shown by Erd\H{o}s, Kierstead, and Trotter in \cite{erdos1991dimension} that there exist graphs $G$
having boxicity $\Omega(\Delta \ln\Delta)$. Using Observation \ref{obv_thdim_box}, we get $\dimth(\Delta) = \Omega(\Delta \ln \Delta)$. Bridging the gap between the upper and lower bounds for $\dimth(\Delta)$ would be interesting. Since, by Theorem~\ref{thm_alpha_dimth}, $\dimth(G) = \gamma(f_G)$, it may be worthwhile to see if techniques from complexity theory could be used to bridge this gap. 
\item Let $G$ be $k$-degenerate. We show in Section \ref{sec_degeneracy} that $\dimth(G) \leq 10k\ln n$. It was shown in Section 3.1 in \cite{AdiSunRog} that there exist $k$-degenerate graphs on $n$ vertices with boxicity in $\Omega(k\ln n)$. Together with Observation \ref{obv_thdim_box}, this implies that the upper bound for $\dimth(G)$ we prove in Section \ref{sec_degeneracy} is tight up to constants. This bound gives some interesting corollaries.
\begin{enumerate}
\item Let $G \in \mathcal{G}(n,m)$, where $m \geq  n/2$. Then, asymptotically almost surely $\dimth(G) \in O(d_{av}\log n)$, where $d_{av} = \frac{2m}{n}$ denotes the average degree of $G$. 
\item If $G$ has a girth greater than $g+1$, then $\dimth(G) = O(n^{\frac{1}{\lfloor g/2 \rfloor }}\ln n)$. 
\end{enumerate}  
\item In Section \ref{sec:thdim_MVC}, we show that the threshold dimension of any graph is upper bounded by its minimum vertex cover number, which implies that for any graph $G$, $\dimth(G) \leq n-\max\{\alpha(G), \omega(G)\}$. We show that this bound is tight. As a corollary we show that if $n$ is sufficiently large, then $\dimth(G) \leq n - 0.72\ln n$.   
\end{enumerate}
\subsection{Preliminaries}
\begin{definition}
\label{def:thsupgraph}
Given a graph $G$, an independent set $A = \{u_1, u_2,\ldots , u_t\}$ in $G$, and a total ordering $\sigma\colon u_1, u_2, \ldots , u_t$ of the vertices of $A$, we define the threshold supergraph $\tau(G,A,\sigma)$ of $G$ as below. Let $B = V(G) \setminus A$ and for $v\in B$, let $s(v)=\max\{i\colon u_i\in N_G(v)\}$ if $N(v)\cap B\neq\emptyset$ and $s(v)=0$ otherwise. In $\tau(G,A,\sigma)$, the vertices of $A$ form an independent set and those of $B$ form a clique and each vertex $v \in B$ is adjacent to exactly the vertices $u_1, u_2, \ldots, u_{s(v)}$. Formally, $$V(\tau(G,A,\sigma))=V(G)$$ $$E(\tau(G,A,\sigma))=E(G)\cup\{xy\colon x,y\in B \emph{ and } x \neq y\}\cup\bigcup_{v\in B}\{vu_1,vu_2,\ldots,vu_{s(v)}\}$$
\end{definition}
The following proposition follows directly from the above definition and Proposition~\ref{defn_neighborhood}.
\begin{proposition}
\label{prop:thsupgraph}
Given a graph $G$, an independent set $A$ of $G$, and an ordering $\sigma$ of $A$, the graph $\tau(G,A,\sigma)$ is a threshold graph and $G$ is its subgraph.
\end{proposition}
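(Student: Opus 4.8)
The plan is to verify directly that $\tau:=\tau(G,A,\sigma)$ meets the partition/nested-neighbourhood characterisation of threshold graphs from Proposition~\ref{defn_neighborhood}, and then to observe that $G\subseteq\tau$ is immediate from the definition of $\tau$. Throughout write $B=V(G)\setminus A$, $A=\{u_1,\dots,u_t\}$ with $\sigma\colon u_1,\dots,u_t$, and for $v\in B$ let $s(v)$ be as in Definition~\ref{def:thsupgraph} (with the stated convention $s(v)=0$ when $v$ has no neighbour in $A$).

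First I would check that $(A,B)$ is the required split partition of $V(\tau)=V(G)$. The set $B$ induces a clique in $\tau$ because $E(\tau)$ explicitly contains every pair $xy$ with $x,y\in B$, $x\neq y$. The set $A$ is independent in $\tau$ because none of the three parts of the union defining $E(\tau)$ contributes an edge inside $A$: the part $E(G)$ has no such edge since $A$ is independent in $G$, and the other two parts add only edges with at least one endpoint in $B$. Next I would compute, for each $i\in[t]$, the neighbourhood $N_\tau(u_i)$. Since $A$ is independent in $\tau$ we have $N_\tau(u_i)\subseteq B$, and by construction a vertex $v\in B$ is adjacent to $u_i$ in $\tau$ precisely when $i\leq s(v)$; hence $N_\tau(u_i)=\{v\in B\colon s(v)\geq i\}$. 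From this description, $i\leq j$ implies $N_\tau(u_j)\subseteq N_\tau(u_i)$, so ordering the vertices of $A$ as $u_t,u_{t-1},\dots,u_1$ (the reverse of $\sigma$) yields $N_\tau(u_t)\subseteq N_\tau(u_{t-1})\subseteq\cdots\subseteq N_\tau(u_1)$. By Proposition~\ref{defn_neighborhood}, $\tau$ is a threshold graph.

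It then remains to note that $G$ is a subgraph of $\tau$, which is immediate: $V(\tau)=V(G)$, and $E(\tau)$ is defined to be $E(G)$ together with some further edges, so $E(G)\subseteq E(\tau)$. I do not expect a genuine obstacle here; the only points needing (trivial) care are that $s(v)$ is well defined via the stated convention, and that the adjacency rule ``$v\in B$ is adjacent to exactly $u_1,\dots,u_{s(v)}$'' is read off correctly from $E(\tau)$ so that $N_\tau(u_i)$ really is the set $\{v\in B\colon s(v)\geq i\}$ used above — note in particular that $\{u_1,\dots,u_{s(v)}\}\supseteq N_G(v)\cap A$, since $s(v)$ is the largest index of a neighbour of $v$ in $A$, so passing to $\tau$ genuinely only adds edges.
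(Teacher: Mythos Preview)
Your proof is correct and follows exactly the approach the paper indicates (the paper simply remarks that the proposition ``follows directly from the above definition and Proposition~\ref{defn_neighborhood}'', without further detail). One trivial slip: the ordering witnessing Proposition~\ref{defn_neighborhood} is $\sigma$ itself, not its reverse --- the chain $N_\tau(u_t)\subseteq\cdots\subseteq N_\tau(u_1)$ you derived is precisely the nesting required for the ordering $u_1,\dots,u_t$.
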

\section{Threshold dimension and treewidth}
\label{sec_treewidth}
In this section, we show that, for a graph $G$, $\dimth(G) \leq 2(\tw(G) + 1)$, where $\tw(G)$ denotes the treewidth of $G$. 
We set up some notations and discuss some necessary existing results before going into the proof of the main result. 
\subsection{Definitions, notations, and known results}
The notion of treewidth was first introduced by Robertson and Seymour in~\cite{ROBERTSON1986309}.
\begin{definition}[Tree decomposition]
A \emph{tree decomposition} of a graph $G=(V,E)$ is a pair 
	$(T,\{X_i \colon  i \in V(T)\})$ where $T$ is a tree and for each $i\in V(T)$, $X_i$ is a subset of $V(G)$ (sometimes called a bag), such that the following conditions are satisfied:
	\begin {itemize}
	\item  $\bigcup_{i \in V(T)} X_i = V(G)$.
	\item  $ \forall uv \in E(G), \exists i \in V(T)$, such that $u,v\in X_i$.
	\item  $ \forall i,j,k \in V(T)$: if $j$ is on the path in $T$
	       from $i$ to $k$, then $X_i \cap X_k \subseteq  X_j$.
	\end {itemize}
\end{definition}
The \emph{width} of a tree-decomposition $(T,\{X_i \colon  i \in V(T)\})$ is $\max_{i \in V(T)} |X_i| - 1$.
\begin{definition}[Treewidth]
The \emph{treewidth} of a graph $G$, denoted by $\tw(G)$, is the minimum width over all possible tree decompositions of $G$.
\end{definition}

A tree decomposition $(T, \{X_i \colon  i \in V(T)\})$ of a graph $G$ is said to be a \emph{path decomposition} of $G$ if $T$ is a path.
The \emph{pathwidth} of G, denoted by $\pw(G)$, is defined as the minimum width over all possible path decompositions of $G$. The following result by Chacko and Francis connects threshold dimension of a graph with its pathwidth.
\begin{theorem}[Theorem 7 in \cite{chacko2020representing}]
For every graph $G$, $\dimth(G)\leq \pw(G) + 1$.
\end{theorem}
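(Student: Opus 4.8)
The plan is to build a threshold graph for each bag of a path decomposition, one per ``level'', and intersect them. Let $(T,\{X_i : i \in V(T)\})$ be a path decomposition of $G$ of width $\pw(G) = w$, so every bag has at most $w+1$ vertices, and write the path as $X_1, X_2, \ldots, X_r$. The standard trick is to assign to each vertex $v$ a colour in $\{1,\ldots,w+1\}$ so that within every bag all colours are distinct; this is possible because the ``interval graph'' naturally associated to a path decomposition (make two vertices adjacent iff they share a bag) is chordal with clique number at most $w+1$, hence $(w+1)$-colourable, and any such colouring works. Fix such a colouring $c\colon V(G) \to [w+1]$.

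For each colour class $j \in [w+1]$, I would construct one threshold supergraph $H_j$ of $G$ using Definition~\ref{def:thsupgraph}: take $A_j = c^{-1}(j)$, which is an independent set of $G$ (two vertices of the same colour never share a bag, hence are non-adjacent, since every edge lies inside some bag), and order $A_j$ by the index of the \emph{last} bag each of its vertices appears in, i.e.\ $u$ comes before $u'$ in $\sigma_j$ when $\max\{i : u \in X_i\} \le \max\{i : u' \in X_i\}$ (breaking ties arbitrarily). Set $H_j = \tau(G, A_j, \sigma_j)$. By Proposition~\ref{prop:thsupgraph}, each $H_j$ is a threshold graph containing $G$ as a subgraph, so $G \subseteq \bigcap_{j=1}^{w+1} H_j$. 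It remains to prove the reverse inclusion: if $xy \notin E(G)$, then $xy \notin E(H_j)$ for some $j$.

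The heart of the argument is this last step. Take a non-edge $xy$ of $G$. If $c(x) = c(y) = j$, then $x,y$ are both in the independent set $A_j$ of $H_j$ and hence non-adjacent in $H_j$, so we are done. Otherwise $c(x) \ne c(y)$; without loss of generality let $j = c(y)$ and consider $H_j$, where $x \in B_j = V(G)\setminus A_j$ and $y = u_\ell \in A_j$. The edge $xu_\ell$ is present in $H_j$ only if $\ell \le s(x)$, i.e.\ only if $x$ is adjacent in $G$ to some $u_m \in A_j$ with $m \ge \ell$; by the ordering $\sigma_j$, such a $u_m$ appears in a bag with index at least $\max\{i : y \in X_i\}$. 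The key combinatorial claim is that this forces $x$ to appear together with $y$ in some common bag. Indeed, since $x$ is non-adjacent to $y$ but $x$ and $y$ have the \emph{same} colour-class neighbour structure relevant here, I would argue using the third (interpolation) property of tree decompositions: $x$ shares a bag with $u_m$, and $x$ must ``start'' before $u_m$'s last appearance, while $y = u_\ell$'s bags end no later than $u_m$'s; a short interval-overlap argument on the path shows $x$ and $y$ occupy a common bag, contradicting $xy \notin E(G)$. The main obstacle — and the place where the ordering of $A_j$ must be chosen exactly right — is making this interval-overlap argument airtight: one has to verify that the interval of bags containing $x$ genuinely meets the interval of bags containing $y$, using only that it meets the interval of bags containing some $u_m$ that is ``ordered after $y$''. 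Once that geometric lemma about intervals on a path is established, the theorem follows, giving $\dimth(G) \le w+1 = \pw(G)+1$.
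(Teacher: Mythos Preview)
Your construction is the right one, and it does yield $\dimth(G)\le\pw(G)+1$; however, your verification of the reverse inclusion contains a genuine error, not just a missing detail. Two problems:

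\textbf{(1) The ``without loss of generality'' is false.} You take a non-edge $xy$ with $c(x)\neq c(y)$ and set $j=c(y)$ ``WLOG''. It is not. Take $y$ with bag-interval $[1,3]$, $x$ with $[5,7]$, and $u_m\in A_j$ with $[5,8]$ (disjoint from $y$'s interval since $c(u_m)=c(y)$), where $xu_m\in E(G)$. Then $L(u_m)\ge L(y)$, so $u_m$ is after $y$ in $\sigma_j$, and hence $xy\in E(H_j)$---yet $x$ and $y$ share no bag. So $H_{c(y)}$ fails to kill this non-edge. The non-edge \emph{is} killed, but by $H_{c(x)}$. The correct normalization is: assume $L(x)\le L(y)$ and take $j=c(y)$ (the colour of the vertex whose interval ends later).

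\textbf{(2) Your intended contradiction is bogus.} You plan to derive that $x$ and $y$ share a bag and say this ``contradicts $xy\notin E(G)$''. It does not: a path decomposition only guarantees that every edge lies in some bag, not that every pair in a bag is an edge. So even if your interval-overlap claim were true, you would have no contradiction.

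The correct argument avoids both issues and is short. With $L(x)\le L(y)$ and $j=c(y)$, suppose $u_m\in A_j\cap N_G(x)$ has $L(u_m)\ge L(y)$. Since $u_m\neq y$ (as $xy\notin E(G)$) and $c(u_m)=c(y)$, the bag-intervals of $u_m$ and $y$ are disjoint; together with $L(u_m)\ge L(y)$ this forces $F(u_m)>L(y)\ge L(x)$. But $xu_m\in E(G)$ means their intervals meet, so $F(u_m)\le L(x)$---contradiction. Hence no such $u_m$ exists, $s(x)<\ell$, and $xy\notin E(H_j)$. (Within a colour class all $L$-values are distinct, so there are no tie-breaking subtleties.)

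Finally, note that the present paper does not itself prove this theorem; it only quotes it from~\cite{chacko2020representing}. So there is no ``paper's own proof'' to compare against here, but the corrected argument above is the standard one.
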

Since path decompositions are special cases of tree decompositions, it can be seen that $\tw(G) \leq \pw(G)$. Korach and Solel showed that $\pw(G) = O(\log{n} \cdot \tw(G))$, where $n = |V(G)|$ (Theorem 6 in \cite{KORACH199397}). We thus have $\dimth(G) = O(\log{n} \cdot \tw(G))$. Chacko and Francis note that for any graph $G$, $\dimth(G)\leq (\tw(G)+1)(\tw(G)+2)$ and ask if there is a linear bound on the threshold dimension of a graph in terms of its treewidth. We give an affirmative answer to this question.                                                                                      

Given an ordering $\sigma$ of the vertices of a graph $G$ and $u,v\in V(G)$, we denote by $u<_{\sigma} v$ the fact that $u$ appears before $v$ in the ordering. 

Let $T$ be a rooted tree. For any $u, v \in V(T)$, $u$ is an \emph{ancestor} of $v$, and $v$ a \emph{descendant} of $u$, if $u$ lies on the path from $v$ to the root of $T$. It follows from this definition that every vertex of $T$ is both an ancestor and descendant of itself.
For a rooted tree $T$, a \emph{preorder traversal} of $T$ is an ordering of $V(T)$ in the order in which a depth-first search algorithm starting from the root may visits the vertices of $T$. The following is not difficult to see.
\begin{proposition}\label{prop:preorder}
If $\pi$ is a preorder traversal of a rooted tree $T$, then:
\begin{enumerate}
\vspace{-0.05in}
\itemsep 0.01in
\renewcommand{\theenumi}{(\roman{enumi})}
\renewcommand{\labelenumi}{\theenumi}
\item\label{preorderprop1} for $u,v\in V(T)$ such that $v$ is a descendant of $u$, we have $u<_\pi v$, and
\item\label{preorderprop2} for $u,v,w\in V(T)$ such that $u<_\pi v<_\pi w$, if $w$ is a descendant of $u$, then $v$ is also a descendant of $u$.
\end{enumerate}
\end{proposition}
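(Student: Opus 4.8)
The plan is to reduce both parts to a single structural fact about preorder traversals, which I will call the \emph{contiguity property}: for every $u \in V(T)$, the set $D_u$ of descendants of $u$ occupies a contiguous block of consecutive positions in $\pi$, and $u$ is the first element of that block. Once this is established, part~\ref{preorderprop1} is immediate, since if $v$ is a descendant of $u$ then $v$ lies in the block of $D_u$ whose first element is $u$, so $u <_\pi v$ (the only degenerate case, $v = u$, being read as excluded, or trivially true). Part~\ref{preorderprop2} is equally immediate: if $w \in D_u$ and $u <_\pi v <_\pi w$, then $v$ lies in $\pi$ strictly between the first element of the block of $D_u$ and another element of that block, hence $v$ belongs to the block as well, i.e., $v$ is a descendant of $u$.

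To prove the contiguity property I would use the recursive description of a preorder traversal: if $r$ is the root of $T$ and $T_1, \ldots, T_k$ are the subtrees rooted at the children of $r$, listed in whatever order the depth-first search happens to visit them, then $\pi = (r,\, \pi_1,\, \pi_2,\, \ldots,\, \pi_k)$, where each $\pi_i$ is a preorder traversal of $T_i$. I would then induct on $|V(T)|$. For $u = r$ the set $D_r$ is all of $V(T)$, which is the entire sequence $\pi$ and begins with $r$, so the property holds. For $u \neq r$, the vertex $u$ lies in exactly one subtree $T_i$, and the descendants of $u$ in $T$ coincide with the descendants of $u$ in $T_i$ (any path to the root of $T$ from a vertex of $T_i$ must pass through the root of $T_i$); by the induction hypothesis applied to $T_i$, the set $D_u$ forms a contiguous block within $\pi_i$ starting with $u$, and since $\pi_i$ itself occurs as a contiguous segment of $\pi$, this block is contiguous in $\pi$ as well and still begins with $u$.

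There is no real obstacle here; the only thing to be careful about is the bookkeeping involved in identifying descendants within a subtree with descendants in the whole tree, and being explicit that $<_\pi$ is strict so that the $v = u$ reading of part~\ref{preorderprop1} is handled. The proposition is exactly as ``not difficult to see'' as claimed, with the contiguity property being the natural lemma that makes both statements transparent.
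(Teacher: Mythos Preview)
Your argument is correct. The paper itself offers no proof of this proposition, simply introducing it with ``The following is not difficult to see''; your contiguity property (that $D_u$ forms a $\pi$-interval with $u$ as its leftmost element) is exactly the standard way to make both claims precise, and your inductive proof of that property via the recursive decomposition $\pi = (r,\pi_1,\ldots,\pi_k)$ is clean and complete. Your handling of the $u=v$ edge case in part~\ref{preorderprop1} is also appropriate, since the paper's convention that every vertex is its own descendant makes the strict inequality literally false there; in practice the proposition is only ever invoked in the paper with distinct vertices.
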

Let $G$ be a graph and $\mathcal{T}=(T,\{X_i\colon i\in V(T)\})$ be a tree decomposition of $G$ having width $k$. We choose an arbitrary vertex $r$ to be the root of $T$ and henceforth consider $T$ to be a rooted tree. Then a function $b\colon V(G) \rightarrow V(T)$ is defined as follows: for a vertex $v \in V(G)$, $b(v)$ is the bag containing $v$ in the tree decomposition that is closest to $r$. Formally, $b(v)$ is the vertex of $T$ such that $v\in X_{b(v)}$ and $v\notin X_i$ for any $i\in V(T)$ that is an ancestor of $b(v)$.
\begin{lemma}[Lemma 10 in~\cite{CHANDRAN2007733}]
\label{lem:edgecomparable}
If $uv \in E(G)$, then $b(u)$ is either an ancestor or descendant of $b(v)$ in $T$.
\end{lemma}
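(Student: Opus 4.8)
Lemma 10 of \cite{CHANDRAN2007733}: if $uv\in E(G)$, then $b(u)$ is either an ancestor or descendant of $b(v)$ in the rooted tree $T$.

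The plan is to argue by contradiction, exploiting the third axiom of a tree decomposition together with the definition of the map $b$. First I would suppose, for contradiction, that $b(u)$ and $b(v)$ are incomparable in $T$, i.e. neither lies on the path from the other to the root $r$. Since $uv\in E(G)$, the second axiom of tree decompositions gives a bag $X_j$ containing both $u$ and $v$; let $j\in V(T)$ be such a node. The idea is that $j$ cannot simultaneously be ``below'' both $b(u)$ and $b(v)$ in a way compatible with the connectivity axiom, so one of $b(u),b(v)$ must in fact be an ancestor of $j$, and then the other will be forced to be comparable to it.

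The key steps, in order: (1) Observe that since $u\in X_{b(u)}$ and $u\in X_j$, the node $b(u)$ lies on the path in $T$ from $j$ to any node witnessing the ``closest to root'' property; more usefully, by the connectivity axiom the set $\{i\in V(T)\colon u\in X_i\}$ is connected in $T$, and $b(u)$ is its topmost vertex (the one closest to $r$). Hence $b(u)$ is an ancestor of $j$ OR $j$ is in the subtree hanging off $b(u)$ — in either case, because the ``$u$-bags'' form a connected subtree whose highest point is $b(u)$, and $j$ is one of them, we get that $b(u)$ is an ancestor of $j$. (2) By the identical argument applied to $v$, $b(v)$ is an ancestor of $j$. (3) But the set of ancestors of a fixed node $j$ is totally ordered by the ancestor relation (it is exactly the set of nodes on the path from $j$ to $r$). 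Therefore $b(u)$ and $b(v)$, both being ancestors of $j$, are comparable: one is an ancestor of the other. This contradicts the assumption of incomparability, completing the proof.

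The main obstacle — really the only subtle point — is step (1): establishing that $b(u)$ is an ancestor of every node whose bag contains $u$, in particular of $j$. This is where one must use the connectivity (third) axiom carefully: the nodes $i$ with $u\in X_i$ induce a connected subtree $T_u$ of $T$, and $b(u)$ is by definition the element of $T_u$ closest to the root. In a rooted tree, the closest-to-root vertex of any connected subtree is an ancestor of every vertex of that subtree (otherwise the path from that vertex to $b(u)$ would have to pass through a common ancestor strictly closer to the root, which would then lie on the $j$-to-$b(u)$ path and hence in $T_u$ by connectivity, contradicting minimality of $b(u)$). Once this ancestor property is in hand, steps (2) and (3) are immediate, and the lemma follows.
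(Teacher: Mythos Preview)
Your argument is correct. The key observation---that the bags containing a fixed vertex $u$ form a connected subtree $T_u$ whose unique highest node is $b(u)$, so that $b(u)$ is an ancestor of every node in $T_u$---is exactly the right one, and your justification via the least common ancestor (if $c=\mathrm{lca}(j,b(u))$ then $c$ lies on the $j$--$b(u)$ path, hence $c\in T_u$, hence $c=b(u)$ by minimality) is clean.

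There is nothing to compare against in the present paper: Lemma~\ref{lem:edgecomparable} is quoted from~\cite{CHANDRAN2007733} without proof. Your proof is the standard one and matches what appears in that reference.
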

\begin {lemma}[Lemma 8 in \cite{CHANDRAN2007733}] 
\label{lem:theta}
There exists a function $\theta \colon  V(G) \rightarrow \{0, 1,\ldots,k\}$, such that for any $i \in V(T)$ and for any two distinct nodes  $u, v \in X_i$, $\theta(u) \neq\theta(v)$.
\end{lemma}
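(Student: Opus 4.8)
The plan is to recast the statement as a graph-coloring problem and then settle it by a greedy argument that exploits the tree structure of $\mathcal{T}$. Introduce the auxiliary graph $H$ on vertex set $V(G)$ in which two distinct vertices are adjacent precisely when some bag $X_i$ contains both of them. A function $\theta$ with the required property is exactly a proper coloring of $H$ with the $k+1$ colors $\{0,1,\ldots,k\}$, so it suffices to show that $H$ is $(k+1)$-colorable. One slick route is to observe that $H$ is chordal --- it admits a tree decomposition, namely $\mathcal{T}$ itself, in which every bag induces a clique --- and that by the Helly property of subtrees of a tree every clique of $H$ is contained in a single bag, whence $\omega(H)\le k+1$ and therefore $\chi(H)=\omega(H)\le k+1$. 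I will instead spell out a direct greedy construction, which has the added advantage of producing $\theta$ explicitly.

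First I would fix the root $r$ of $T$ and recall the map $b\colon V(G)\to V(T)$ introduced above. I order the vertices of $G$ as $v_1,v_2,\ldots$ so that whenever $b(v_a)$ is a strict ancestor of $b(v_b)$ one has $a<b$; concretely, list the bags of $T$ in a preorder traversal (which by Proposition~\ref{prop:preorder} visits each bag before all of its descendants) and, running through the bags in this order, append to the list the vertices of the current bag not yet listed. I then color the vertices in this order, assigning to $v_b$ the smallest color in $\{0,1,\ldots,k\}$ not already used on a previously colored vertex sharing a bag with $v_b$.

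The crux is the following claim: when $v=v_b$ is being colored, every previously colored vertex $u$ that shares a bag with $v$ in fact lies in $X_{b(v)}$. Granting this, the colors forbidden for $v$ number at most $|X_{b(v)}|-1\le k$, so the greedy choice always succeeds and the resulting $\theta$ is as required. To prove the claim, suppose $u,v\in X_j$. The nodes of $T$ whose bags contain $u$ form a connected subtree of $T$, and $b(u)$ is the node of this subtree nearest to $r$, hence an ancestor of every node of the subtree, in particular of $j$; likewise $b(v)$ is an ancestor of $j$. Any two ancestors of $j$ are comparable in $T$, and the chosen ordering forbids $b(u)$ from being a strict descendant of $b(v)$ (since $u$ was colored before $v$), so $b(u)$ is an ancestor of $b(v)$; consequently $b(v)$ lies on the $T$-path from $b(u)$ to $j$. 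Since $u\in X_{b(u)}\cap X_j$, the third axiom of a tree decomposition now yields $u\in X_{b(v)}$, proving the claim.

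The main obstacle is precisely this claim --- controlling, for each vertex, which of the earlier-colored vertices can conflict with it --- and it rests on two simple structural facts: the bags containing a fixed vertex form a connected subtree of $T$ (so the topmost one, $b(\cdot)$, dominates all the others), and the processing order respects the ancestor relation on the $b$-values. The remaining ingredients --- the reformulation via $H$, the greedy step, and the bound $|X_{b(v)}|\le k+1$ coming from the width of $\mathcal{T}$ being $k$ --- are routine, so I expect the write-up to be short.
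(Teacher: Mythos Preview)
Your proposal is correct. The paper itself does not prove the lemma in full (it is quoted from~\cite{CHANDRAN2007733}) but supplies a remark that is exactly what you call the ``slick route'': form the graph $G'$ (your $H$), observe that $\mathcal{T}$ is a tree decomposition of $G'$ with clique bags so $G'$ is chordal, note that every clique of $G'$ sits inside some bag so $\omega(G')\le k+1$, and invoke perfection of chordal graphs to get $\chi(G')\le k+1$.

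Your main write-up instead gives the explicit greedy construction. This is a genuinely different (and more elementary) argument: it avoids citing perfection of chordal graphs and the Helly property, trading those for the direct structural claim that earlier-colored conflicting vertices all lie in $X_{b(v)}$. The paper's route is shorter and conceptually cleaner if one is happy to quote the perfection of chordal graphs; your route is self-contained and yields $\theta$ algorithmically, which is a modest bonus. Either is fine here, and since you already sketch the paper's argument in your first paragraph, you might simply present that and drop the greedy construction if brevity is a concern.
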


\noindent{\textit{Remark.}} The function $\theta$ is a proper vertex colouring of the chordal graph $G'$ that one obtains from $G$ by adding edges between every pair of vertices that appear together in some bag of the tree decomposition. Clearly, $\mathcal{T}$ is a tree decomposition of $G'$ as well. From the fact that every clique in $G'$ has to be contained in some bag of $\mathcal{T}$, and the fact that chordal graphs are perfect, it follows that $\theta$ needs to use only $\max\{|X_i|\colon i\in V(T)\}$ different colours.

The following lemmas from~\cite{CHANDRAN2007733} describe some properties of the functions $\theta$ and $b$ that we will use later. These are direct corollaries of the definition of $\theta$ and that of tree decompositions.
\begin{lemma}[Lemma 9 in \cite{CHANDRAN2007733}]
\label{lem:thetacoloring}
If $uv \in E(G)$ then $\theta(u) \neq \theta(v)$.
\end{lemma}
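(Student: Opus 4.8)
The plan is to derive this directly from the edge axiom of a tree decomposition together with Lemma~\ref{lem:theta}. Since $uv \in E(G)$, the second condition in the definition of a tree decomposition guarantees the existence of a node $i \in V(T)$ with $u, v \in X_i$. Because $u$ and $v$ are the two endpoints of an edge of the simple graph $G$, they are distinct vertices, so $u$ and $v$ are two distinct nodes lying in the common bag $X_i$. Applying Lemma~\ref{lem:theta} to this bag $X_i$ and to the pair $u, v$ then immediately gives $\theta(u) \neq \theta(v)$, which is exactly the assertion.

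There is essentially no obstacle here; the statement is a one-line corollary of Lemma~\ref{lem:theta} and the definition of a tree decomposition. The only point that needs to be acknowledged is that $u \neq v$ is required in order to invoke Lemma~\ref{lem:theta}, and this is automatic since $G$ is simple and hence has no loops. As an alternative phrasing, one could appeal to the Remark following Lemma~\ref{lem:theta}: $\theta$ is a proper vertex colouring of the chordal supergraph $G'$ obtained by making every bag a clique, and since $E(G) \subseteq E(G')$, the colouring $\theta$ is in particular proper on $G$, i.e.\ $\theta(u) \neq \theta(v)$ whenever $uv \in E(G)$. Either route is short; I would present the first one, as it uses only the definitions directly and does not rely on the (informally stated) Remark.
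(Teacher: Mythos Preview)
Your proof is correct and matches the paper's own treatment: the paper does not give a formal proof of this lemma but states that it is a ``direct corollar[y] of the definition of $\theta$ and that of tree decompositions,'' which is precisely the argument you spell out (edge axiom gives a common bag, then Lemma~\ref{lem:theta} finishes). Your alternative via the Remark is also exactly what the paper's Remark records.
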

\begin{lemma}[Lemma 11 in \cite{CHANDRAN2007733}]
\label{lem:middle}
Let $uv \in E(G)$ and let $b(u)$ be an ancestor of $b(v)$. For any vertex $w \in V(G) \setminus \{u\}$, $\theta(w) \neq \theta(u)$ if $b(w)$ is in the path from $b(v)$ to $b(u)$ in $T$.
\end{lemma}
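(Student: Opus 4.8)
The plan is to show that the vertex $u$ itself lies in \emph{every} bag along the path from $b(u)$ to $b(v)$ in $T$; once this is established, the conclusion is immediate from the defining property of $\theta$ (Lemma~\ref{lem:theta}).

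First I would invoke the edge hypothesis: since $uv\in E(G)$, the second axiom of a tree decomposition yields a node $j\in V(T)$ with $u,v\in X_j$. The next step is to locate $j$ relative to $b(u)$ and $b(v)$. By the connectivity axiom, the set of nodes whose bag contains $u$ induces a connected subtree of $T$; by the definition of $b$, the node $b(u)$ of this subtree has no proper ancestor lying in it, so (the subtree being connected) every node of the subtree is a descendant of $b(u)$. In particular $j$ is a descendant of $b(u)$. The identical argument applied to $v$ gives that $j$ is a descendant of $b(v)$. Together with the hypothesis that $b(u)$ is an ancestor of $b(v)$, this places $b(u)$, $b(v)$, and $j$ on a common root-to-node path, with $b(u)$ topmost, $b(v)$ in the middle, and $j$ at the bottom.

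Now $u$ belongs to both $X_{b(u)}$ and $X_j$, so by the connectivity axiom once more, $u$ belongs to $X_i$ for every node $i$ on the $b(u)$--$j$ path in $T$; since the $b(u)$--$b(v)$ path is an initial segment of the $b(u)$--$j$ path, $u$ lies in $X_i$ for every $i$ on the path from $b(v)$ to $b(u)$. Finally, given $w\in V(G)\setminus\{u\}$ with $b(w)$ on this path, the bag $X_{b(w)}$ contains both the distinct vertices $u$ and $w$, so Lemma~\ref{lem:theta} applied with $i=b(w)$ gives $\theta(w)\neq\theta(u)$, which is what we wanted. The only point that needs care is the repeated use of the connectivity axiom — first to fix the relative positions of $b(u)$, $b(v)$, $j$, and then to conclude that $u$ persists in every intermediate bag; the remainder is routine bookkeeping. (Lemma~\ref{lem:edgecomparable} could be cited to note that $b(u)$ and $b(v)$ are comparable, but here that comparability is already assumed.)
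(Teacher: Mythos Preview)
The paper does not supply its own proof of this lemma; it is quoted from~\cite{CHANDRAN2007733} and stated without argument. Your proof is correct and is essentially the standard one: the crucial point is that $u$ lies in every bag along the $b(u)$--$b(v)$ path, which you establish cleanly by locating a bag $X_j$ (with $u,v\in X_j$) at or below $b(v)$ and then invoking the connectivity axiom on the $b(u)$--$j$ path; the conclusion then drops out of Lemma~\ref{lem:theta}. Nothing is missing.
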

Let $\pi$ be a preorder traversal of $T$. Let $\sigma$ be an ordering of $V(G)$ such that for any two vertices $u,v\in V(G)$, $u<_\sigma v$ in $\sigma$ if $b(u)<_\pi b(v)$. (In $\sigma$, we let the ordering between two vertices $u,v\in V(G)$ such that $b(u)=b(v)$ to be arbitrary. Thus, if $u<_\sigma v$, then $b(u)\leq_\pi b(v)$.) Let $\sigma^{-1}$ denote the ordering of $V(G)$ obtained by reversing the ordering $\sigma$. Given a set $A\subseteq V(G)$, we denote by $\sigma|_A$ the ordering of vertices of $A$ in the order in which they appear in $\sigma$.
\subsection{Proof of the main result}
For $i\in\{0,1,\ldots,k\}$, we define $C_i=\{v\in V(G)\colon\theta(v)=i\}$. From Lemma~\ref{lem:thetacoloring}, we know that $\theta$ is a proper colouring of $G$, which implies that $C_i$ is an independent set of $G$.
For each class $C_i$, where $0 \leq i \leq k$, we define two graphs $G_i^{1}=\tau(G,C_i,\sigma|_{C_i})$ and $G_i^{2}=\tau(G,C_i,\sigma^{-1}|_{C_i})$. 

\begin{lemma}
\label{lem:main}
Let $u,v$ be distinct vertices in $G$. Then there do not exist $x_u,y_u\in N_G(u)$ and $x_v,y_v\in N_G(v)$ such that $x_u<_\sigma v<_\sigma y_u$, $x_v<_\sigma u<_\sigma y_v$, $\theta(u)=\theta(x_v)$, and $\theta(v)=\theta(x_u)$.
\end{lemma}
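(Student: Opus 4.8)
The plan is to assume for contradiction that such $x_u,y_u,x_v,y_v$ exist and then produce a colour clash that contradicts $\theta(u)=\theta(x_v)$. The first observation is that the statement is invariant under simultaneously swapping $u\leftrightarrow v$, $x_u\leftrightarrow x_v$, and $y_u\leftrightarrow y_v$, so I may assume $u<_\sigma v$; under this assumption I will in fact only need three of the four hypotheses, namely $y_u\in N_G(u)$ with $v<_\sigma y_u$, $x_v\in N_G(v)$ with $x_v<_\sigma u$, and $\theta(u)=\theta(x_v)$. The tools I will use are: $p<_\sigma q$ implies $b(p)\le_\pi b(q)$ (definition of $\sigma$); an edge of $G$ forces its two $b$-images to be comparable in $T$ (Lemma~\ref{lem:edgecomparable}); if $a<_\pi c$ with $a,c$ comparable in $T$ then $a$ is an ancestor of $c$ (Proposition~\ref{prop:preorder}\ref{preorderprop1}); and Proposition~\ref{prop:preorder}\ref{preorderprop2}, Lemma~\ref{lem:middle}, Lemma~\ref{lem:theta} for the final step.

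The argument then has two steps. \emph{Step 1: $b(u)$ is an ancestor of $b(v)$.} From $u<_\sigma v<_\sigma y_u$ we get $b(u)\le_\pi b(v)\le_\pi b(y_u)$, and since $uy_u\in E(G)$ makes $b(u)$ and $b(y_u)$ comparable, $b(u)$ is an ancestor of $b(y_u)$, i.e.\ $b(y_u)$ is a descendant of $b(u)$; as $b(v)$ sits in the preorder between $b(u)$ and its descendant $b(y_u)$, Proposition~\ref{prop:preorder}\ref{preorderprop2} gives that $b(v)$ is a descendant of $b(u)$ (the sub-cases $b(v)=b(u)$ and $b(v)=b(y_u)$ being immediate). \emph{Step 2: pinning $b(u)$ onto a tree path.} Since $vx_v\in E(G)$ and $x_v<_\sigma v$, the images $b(x_v),b(v)$ are comparable with $b(x_v)\le_\pi b(v)$, so $b(x_v)$ is an ancestor of $b(v)$; by Step 1 so is $b(u)$, hence $b(x_v)$ and $b(u)$ are both ancestors of $b(v)$ and therefore comparable, and $x_v<_\sigma u$ (giving $b(x_v)\le_\pi b(u)$) forces $b(x_v)$ to be an ancestor of $b(u)$. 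Consequently $b(u)$ lies on the path in $T$ from $b(v)$ to $b(x_v)$, so applying Lemma~\ref{lem:middle} to the edge $x_vv$ with $w=u$ (note $u\neq x_v$ since $x_v<_\sigma u$) yields $\theta(u)\neq\theta(x_v)$, contradicting the hypothesis. In the corner case $b(u)=b(x_v)$ the same conclusion comes straight from Lemma~\ref{lem:theta} since then $u$ and $x_v$ share a bag, and the cases where other bags collapse are handled the same way.

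I expect the only genuinely creative move to be the idea in Steps 1 and 2 of using the ``outer'' neighbours $y_u$ and $x_v$ to upgrade the weak preorder inequalities $b(u)\le_\pi b(v)$ and $b(x_v)\le_\pi b(v)$ into honest ancestor relations in $T$; once $b(u)$ is shown to lie on the tree path between $b(x_v)$ and $b(v)$, Lemma~\ref{lem:middle} finishes immediately. The remaining effort is purely bookkeeping: verifying the handful of degenerate situations in which two of the bags $b(u),b(v),b(x_v),b(y_u)$ coincide, each of which reduces trivially to Lemma~\ref{lem:theta}.
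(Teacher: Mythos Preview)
Your proof is correct and follows essentially the same route as the paper's: assume $u<_\sigma v$ by symmetry, use the sandwich $u<_\sigma v<_\sigma y_u$ together with the edge $uy_u$ and Proposition~\ref{prop:preorder} to place $b(v)$ below $b(u)$ in $T$, then use the edge $x_vv$ to place $b(u)$ on the $T$-path from $b(v)$ to $b(x_v)$, and finish with Lemma~\ref{lem:middle}. The only cosmetic difference is that in your Step~2 you argue ``$b(x_v)$ and $b(u)$ are both ancestors of $b(v)$, hence comparable,'' whereas the paper reruns the Step~1 argument verbatim on the sandwich $x_v<_\sigma u<_\sigma v$ via Proposition~\ref{prop:preorder}\ref{preorderprop2}; your explicit corner-case discussion is unnecessary since Lemma~\ref{lem:middle} already covers the degenerate situations.
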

\begin{proof}
	Clearly, we have either $u<_\sigma v$ or $v<_\sigma u$. Let us assume without loss of generality that $u<_\sigma v$. Then we have $u<_\sigma v<_\sigma y_u$, which implies that $b(u)\leq_\pi b(v)\leq_\pi b(y_u)$. Since $uy_u\in E(G)$, we have from Lemma~\ref{lem:edgecomparable} that $b(u)$ is either an ancestor or descendant of $b(y_u)$. As $\pi$ is a preorder traversal of $T$, Proposition~\ref{prop:preorder}\ref{preorderprop1} implies that $b(u)$ is an ancestor of $b(y_u)$ in $T$. As $b(u)\leq_\pi b(v)\leq_\pi b(y_u)$, it now follows from Proposition~\ref{prop:preorder}\ref{preorderprop2} that $b(v)$ is a descendant of $b(u)$. Similarly, $x_v<_\sigma u<_\sigma v$ implies that $b(x_v)\leq_\pi b(u)\leq_\pi b(v)$, and $vx_v\in E(G)$ then implies by Lemma~\ref{lem:edgecomparable}, Proposition~\ref{prop:preorder}\ref{preorderprop1} and~\ref{preorderprop2} that $b(u)$ is a descendant of $b(x_v)$. Now applying Lemma~\ref{lem:middle} to $x_v$, $u$ and $v$, we have that $\theta(x_v)\neq\theta(u)$, which is a contradiction.
	\hfill\qed
\end{proof}

\begin{lemma}\label{lem:final}
$G = \bigcap \limits_{0 \leq i \leq k} (G_i^1\cap G_i^2)$
\end{lemma}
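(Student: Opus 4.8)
The plan is to show the two inclusions $G \subseteq \bigcap_{0\le i\le k}(G_i^1 \cap G_i^2)$ and $\bigcap_{0\le i\le k}(G_i^1 \cap G_i^2) \subseteq G$ separately. The first inclusion is immediate: by Proposition~\ref{prop:thsupgraph}, each $G_i^1 = \tau(G, C_i, \sigma|_{C_i})$ and each $G_i^2 = \tau(G, C_i, \sigma^{-1}|_{C_i})$ is a threshold supergraph of $G$, so $E(G) \subseteq E(G_i^1)$ and $E(G) \subseteq E(G_i^2)$ for every $i$; intersecting over all $i$ preserves this.

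For the reverse inclusion, I would argue the contrapositive: suppose $u,v$ are distinct vertices with $uv \notin E(G)$, and show that there is some $i$ for which $uv \notin E(G_i^1)$ or $uv \notin E(G_i^2)$. Since $\theta$ is a proper colouring of $G$ (Lemma~\ref{lem:thetacoloring}), $\theta(u) \ne \theta(v)$; say WLOG $u <_\sigma v$, and set $i = \theta(u)$, so $u \in C_i$ while $v \notin C_i$. In the threshold supergraph $\tau(G, C_i, \sigma|_{C_i})$, the only new edges incident to $u \in C_i$ are of the form $u w$ where $w \in B = V(G)\setminus C_i$ and $u$ precedes $w$ in the relevant ordering up to the index $s(w)$ — in other words, $uv$ becomes an edge of $G_i^1$ only if $v \in N_G(w')$ for some $w'$ that is "$\sigma$-later" than $u$ among the vertices of $C_i$, i.e.\ only if $v$ has a neighbour in $C_i$ appearing after $u$ in $\sigma$. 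Similarly, $uv$ becomes an edge of $G_i^2$ only if $v$ has a neighbour in $C_i$ appearing before $u$ in $\sigma$. I would unwind the definition of $\tau$ and $s(\cdot)$ carefully to state this precisely: $uv \in E(G_i^1) \cap E(G_i^2)$ (given $uv \notin E(G)$, $u \in C_i$, $v \notin C_i$) forces the existence of $y_v \in N_G(v) \cap C_i$ with $u <_\sigma y_v$ and $x_v \in N_G(v) \cap C_i$ with $x_v <_\sigma u$.

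Now I would symmetrise: the hypothesis was only "$uv \notin E(G)$ and $\theta(u)=i$"; but the same reasoning with the colour class $C_j$ for $j = \theta(v)$ shows that if additionally $uv \in E(G_j^1) \cap E(G_j^2)$, then there are $x_u, y_u \in N_G(u) \cap C_j$ with $x_u <_\sigma v <_\sigma y_u$. Combining, if $uv$ survives in all four of $G_i^1, G_i^2, G_j^1, G_j^2$, we obtain exactly the configuration $x_u, y_u \in N_G(u)$, $x_v, y_v \in N_G(v)$ with $x_u <_\sigma v <_\sigma y_u$, $x_v <_\sigma u <_\sigma y_v$, $\theta(u) = \theta(x_v)$ (both equal $i$, since $x_v \in C_i$) and $\theta(v) = \theta(x_u)$ (both equal $j$, since $x_u \in C_j$) — which Lemma~\ref{lem:main} forbids. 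Hence $uv$ must be missing from at least one of the graphs in the intersection, completing the reverse inclusion.

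The main obstacle I anticipate is purely bookkeeping rather than conceptual: correctly translating "$uv$ is an edge of $\tau(G, C_\ell, \tau\text{-ordering})$ but not of $G$" into a clean statement about which neighbours of $v$ lie in $C_\ell$ on which side of $u$ in $\sigma$, being careful about the two sub-cases (is $u$ the $C_\ell$-vertex and $v$ the clique-vertex, or is it the degenerate situation where both endpoints could play either role — here ruled out since exactly one of $u,v$ is in $C_\ell$), and about boundary behaviour of the function $s(\cdot)$ when a vertex has no neighbour in $B$. Once that dictionary is set up, feeding the resulting inequalities and colour equalities into Lemma~\ref{lem:main} is mechanical. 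I would also double-check the edge case $x_u = x_v$ or overlaps among the four witnesses do not cause trouble — they do not, since Lemma~\ref{lem:main} only needs the stated order and colour relations, with no distinctness assumed beyond $u \ne v$.
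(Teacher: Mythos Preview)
Your overall approach mirrors the paper's proof, but there is a genuine gap. You write: ``Since $\theta$ is a proper colouring of $G$ (Lemma~\ref{lem:thetacoloring}), $\theta(u) \ne \theta(v)$.'' This inference is backwards. Lemma~\ref{lem:thetacoloring} says that \emph{adjacent} vertices receive different colours under $\theta$; it says nothing about non-adjacent vertices. Since you are assuming $uv \notin E(G)$, it is entirely possible that $\theta(u) = \theta(v)$, and your argument does not cover this case.

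The missing case is easy to handle, and the paper does it in one line: if $\theta(u) = \theta(v) = i$, then both $u$ and $v$ lie in $C_i$, which is the independent set in the construction of $G_i^1 = \tau(G, C_i, \sigma|_{C_i})$ and $G_i^2 = \tau(G, C_i, \sigma^{-1}|_{C_i})$; hence $uv$ is absent from both $G_i^1$ and $G_i^2$. Once you add this sentence, the remainder of your argument (the extraction of $x_u, y_u, x_v, y_v$ from the four threshold supergraphs and the appeal to Lemma~\ref{lem:main}) matches the paper's proof exactly. Note also that your ``WLOG $u <_\sigma v$'' is never actually used and can be dropped.
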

\begin{proof}
	Consider any two distinct vertices $u$ and $v$ of $G$. Since $G_i^1$ and $G_i^2$, for $1\leq i\leq k$, are both supergraphs of $G$ by definition, we have that if $uv\in E(G)$, then $uv$ is an edge of both $G_i^1$ and $G_i^2$. So in order to prove the lemma, we only need to prove that whenever $uv\notin E(G)$, there exists $i\in\{0,1,\ldots,k\}$ and $j\in\{1,2\}$ such that $uv\notin E(G_i^j)$.
	
	Suppose $\theta(u) = \theta(v) = i$. Since the class $C_i$ is an independent set in $G_{i}^1$ and $G_{i}^2$, $uv$ is an edge in neither $G_i^1$ nor $G_i^2$, and we are done. So let us assume that $\theta(u) \neq \theta(v)$. Let $\theta(u)=i$ and $\theta(v)=j$. We claim that $uv$ is not an edge in one of the graphs $G_i^1$, $G_i^2$, $G_j^1$, or $G_j^2$. Suppose for the sake of contradiction that $uv\in E(G_i^1)\cap E(G_i^2)\cap E(G_j^1)\cap E(G_j^2)$. Then $uv$ is an edge in each of the graphs $\tau(G,C_i,\sigma|_{C_i})$, $\tau(G,C_i,\sigma^{-1}|_{C_i})$, $\tau(G,C_j,\sigma|_{C_j})$, $\tau(G,C_j,\sigma^{-1}|_{C_j})$. Since $uv\in E(\tau(G,C_i,\sigma|_{C_i}))$, by Definition~\ref{def:thsupgraph}, we have that there exists $y_v\in C_i\cap N_G(v)$ such that $u<_\sigma y_v$. Further, since $uv\in E(\tau(G,C_i,\sigma^{-1}|_{C_i}))$, there exists $x_v\in C_i\cap N_G(v)$ such that $u<_{\sigma^{-1}} x_v$, or in other words, $x_v<_\sigma u$. As $uv\in E(\tau(G,C_j,\sigma|_{C_j}))$ and $uv\in E(\tau(G,C_j,\sigma^{-1}|_{C_j}))$, we can similarly conclude that there exist $x_u,y_u\in C_j\cap N_G(u)$ such that $x_u<_\sigma v<_\sigma y_u$. Since $\theta(x_u)=\theta(v)=j$ and $\theta(x_v)=\theta(u)=i$, we now have a contradiction to Lemma~\ref{lem:main}.
	\hfill\qed
\end{proof}
From Proposition~\ref{prop:thsupgraph} and Definition~\ref{def:thsupgraph}, it follows that $G_i^1$ and $G_i^2$ are both threshold graphs for each $i\in\{0,1,2,\ldots,k\}$. Thus by Lemma~\ref{lem:final}, we get that $\dimth(G)\leq 2(k+1)$, which leads to the following theorem.
\begin{theorem}
\label{thm_thresh_dim_treewidth}
For any graph $G$, $\dimth(G) \leq 2(\tw(G)+1)$.
\end{theorem}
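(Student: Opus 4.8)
The plan is to exhibit $2(\tw(G)+1)$ threshold graphs whose intersection is exactly $G$. Start from a tree decomposition $\mathcal{T}=(T,\{X_i\})$ of $G$ of optimal width $k=\tw(G)$, root $T$ arbitrarily, and build the auxiliary structure: the map $b\colon V(G)\to V(T)$ sending $v$ to its topmost bag, a proper colouring $\theta\colon V(G)\to\{0,\dots,k\}$ that is injective on every bag (Lemma~\ref{lem:theta}, using only $k+1$ colours since the associated chordal graph is perfect), a preorder traversal $\pi$ of $T$, and the linear order $\sigma$ of $V(G)$ that sorts vertices by the $\pi$-rank of their bags, ties broken arbitrarily. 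Because $\theta$ properly colours $G$ (Lemma~\ref{lem:thetacoloring}), each colour class $C_i=\theta^{-1}(i)$ is an independent set, so the two threshold supergraphs $G_i^1=\tau(G,C_i,\sigma|_{C_i})$ and $G_i^2=\tau(G,C_i,\sigma^{-1}|_{C_i})$ are defined for every $i\in\{0,\dots,k\}$; by Proposition~\ref{prop:thsupgraph} each is a threshold graph containing $G$, and there are $2(k+1)$ of them in total. Granting that their intersection is $G$, the bound $\dimth(G)\le 2(\tw(G)+1)$ is immediate.

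So the real content is the identity $G=\bigcap_{i}(G_i^1\cap G_i^2)$. The inclusion $\subseteq$ is automatic since each $G_i^j\supseteq G$, and for the reverse I only need that each non-edge $uv$ of $G$ is a non-edge of some $G_i^j$. If $\theta(u)=\theta(v)=i$ then $u,v$ sit in the independent part of $G_i^1$ and $G_i^2$ and we are done. If $\theta(u)=i\ne j=\theta(v)$, I would argue that $uv$ cannot be present in all four of $G_i^1,G_i^2,G_j^1,G_j^2$: unpacking Definition~\ref{def:thsupgraph}, presence of $uv$ in $G_i^1$ and $G_i^2$ produces $y_v,x_v\in C_i\cap N_G(v)$ with $x_v<_\sigma u<_\sigma y_v$, and presence in $G_j^1,G_j^2$ produces $x_u,y_u\in C_j\cap N_G(u)$ with $x_u<_\sigma v<_\sigma y_u$; moreover $\theta(x_v)=\theta(u)$ and $\theta(x_u)=\theta(v)$ by construction.

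The step I expect to be the crux is ruling out exactly this ``mutually straddling'' configuration (Lemma~\ref{lem:main}). The plan there is: assume $u<_\sigma v$ without loss of generality; from $u<_\sigma v<_\sigma y_u$ and $uy_u\in E(G)$, use Lemma~\ref{lem:edgecomparable} together with Proposition~\ref{prop:preorder} to force $b(u)$ to be an ancestor of $b(y_u)$ and hence $b(v)$ to be a descendant of $b(u)$; symmetrically, from $x_v<_\sigma u<_\sigma v$ and $vx_v\in E(G)$, force $b(u)$ to be a descendant of $b(x_v)$. Then $b(u)$ lies on the $T$-path from $b(v)$ to $b(x_v)$, so Lemma~\ref{lem:middle} yields $\theta(x_v)\ne\theta(u)$, contradicting $\theta(x_v)=\theta(u)$. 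Feeding this back closes the unequal-colour case, establishes $G=\bigcap_i(G_i^1\cap G_i^2)$, and gives the theorem. The only loose end is the arbitrary tie-breaking in $\sigma$, which is harmless because every argument uses only the relation $b(u)\le_\pi b(v)$, never a strict order between vertices sharing a bag.
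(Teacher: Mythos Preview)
Your proposal is correct and follows essentially the same approach as the paper: you build the same $2(k+1)$ threshold supergraphs $G_i^1=\tau(G,C_i,\sigma|_{C_i})$ and $G_i^2=\tau(G,C_i,\sigma^{-1}|_{C_i})$ from the colour classes of $\theta$ and the preorder-induced order $\sigma$, and your sketch of the key ``mutually straddling'' obstruction is exactly the paper's Lemma~\ref{lem:main}, proved via the same chain of Lemma~\ref{lem:edgecomparable}, Proposition~\ref{prop:preorder}, and Lemma~\ref{lem:middle}. The remark about tie-breaking in $\sigma$ is a nice bit of hygiene that the paper leaves implicit.
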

\subsubsection{Tightness of the bound}
Note that from Proposition~\ref{obv_int_dimth}, we know that the graph $2K_n$ has threshold dimension $n$ and it is easy to see that the treewidth of this graph is $n-1$. Thus the upper bound on threshold dimension given by Theorem~\ref{thm_thresh_dim_treewidth} is tight up to a multiplicative factor of 2. We give below another example that shows the same tightness result. 

\begin{example}
	\label{example:treewidth_MVC}
	Let $A = \{a_1, \ldots , a_n\}$ and $B = \{b_1, \ldots , b_n\}$. Let $G$ be a graph defined as $V(G) = A \cup B$ and $E(G) = \{a_ia_j\colon 1 \leq i<j\leq n\} \cup \{a_ib_i\colon 1 \leq i \leq n\}$. Let $H$ be the complement of the graph $G$. 
\end{example}
We claim that $\dimth(H) = n$. To show that $\dimth(H) \leq n$, it is easy to see that the edges of $G$ can be covered using $n$ threshold graphs (for each $\ell\in\{1,2,\ldots,n\}$, we can construct a threshold graph having vertex set $V(G)$ and edge set $\{a_ia_j\colon 1\leq i<j\leq n\}\cup\{a_\ell b_\ell\}$; the union of these graphs is $G$). In order to prove that $\dimth(H) \geq n$, assume there is a possibility of representing $H$ as the intersection of less than $n$ threshold graphs. Then there must exist a threshold graph where $a_i$ is non-adjacent to $b_i$ and $a_j$ is non-adjacent to $b_j$, for some $i, j \in [n]$, $i \neq j$. This implies the existence of  an induced $P_4$ (the path $a_ib_jb_ia_j$) in this threshold graph, which is a contradiction. 

Next, we show that $\tw(H) = n-1$. Since $H$ contains a clique of size $n$, $\tw(H) \geq n-1$. Let $X_0 = \{b_1, \ldots , b_n\}$, $X_i = \{a_i\} \cup (B\setminus \{b_i\})$, for all $i \in [n]$. Let $T$ be the tree having vertex set $\{0, 1, \ldots, n\}$ in which the vertex 0 has degree $n$ and all other vertices have degree 1. Observe that the pair $(T,\{X_i\}_{i\in\{0,1,\ldots,n\}})$ is a tree decomposition of $H$ having width $n-1$. Thus, $\tw(H)\leq n-1$. Hence, this example also demonstrates that the bound in Theorem~\ref{thm_thresh_dim_treewidth} is tight up to a multiplicative factor of 2.

\section{Threshold dimension and maximum degree}
\label{sec_maxdegree}
Let $\dimth(\Delta) := \max \{\dimth(G)\colon G$ is a graph having maximum degree $\Delta \}$. In this section, we show that $\dimth(\Delta) = O(\Delta \ln^{2+o(1)}\Delta)$. 

\subsection{Definitions, notations, and auxiliary results}
Given a graph $G$ and an $S \subseteq V(G)$, recall that 
we use $G[S]$ to denote the subgraph induced by the vertex set $S$ in $G$.
For any disjoint pair of sets $S,T \subseteq V(G)$, we use $G[S,T]$ to denote the bipartite subgraph of $G$ where $V(G[S,T]) = S \cup T$ and $E(G[S,T]) = \{uv\colon  u \in S,~v \in T,~ uv \in E(G)\}$. 
Let $G^{*}[S,T]$ denote the graph constructed from $G[S,T]$ by making $T$ a clique. That is, $V(G^{*}[S,T]) = S \cup T$ and $E(G^{*}[S,T]) = E(G[S,T]) \cup \{uv\colon u,v \in T\}$.

We state below the definition of a $k$-suitable family of permutations that was introduced by Dushnik in \cite{dushnik1950concerning}.
\begin{definition}[$k$-suitable family of permutations]
	\label{def:k_suitable_family}
	A family of permutations (or linear orders), $\sigma:=\{\sigma_1, \sigma_2, \ldots, \sigma_r\}$ of $[n]$, is called a $k$-suitable family of permutations of $[n]$ if for all $k$-sized subsets $A$ of $[n]$ and an element $x \in A$ there exists a permutation $\sigma_i \in \sigma$ such that $x$ leads all the elements $y \in A \setminus \{x\}$ in $\sigma_i$; i.e., $y<_{\sigma_i} x$ for all $y\in A\setminus\{x\}$.
\end{definition}
The following lemma is due to Spencer~\cite{spencer1972minimal} though the exact value of $k$ and $n$ are worked out by Scott and Wood in Lemma 5 of~\cite{scott2020better}. We shall use the same values in our calculations too.  

\begin{lemma}[Spencer~\cite{spencer1972minimal}]
	\label{lem:spencers}
	For every $k \geq 2$ and $n \geq 10^4$ there is a $k$-suitable family of permutations of size at most $k 2^k \ln{\ln{n}}$.
\end{lemma}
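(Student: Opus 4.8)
\subsubsection*{Proof plan}

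The plan is to reduce the construction of a small $k$-suitable family to the construction of a small \emph{$(k-1)$-universal family} of binary vectors, and then invoke the textbook probabilistic bound for the latter. Fix a set $W$ with $|W| = \lceil \log_2 n \rceil$ (so $2^{|W|} \ge n$) together with an injection $\iota$ of $[n]$ into the power set $2^W$, identifying each element of $[n]$ with a distinct subset of $W$; fix also an arbitrary linear order $w_1, w_2, \ldots$ of $W$, and for a subset $S \subseteq W$ write $\chi_S$ for its characteristic function $W \to \{0,1\}$. For each \emph{sign vector} $s \in \{0,1\}^W$ I would define a ``signed lexicographic'' linear order $\prec_s$ on $2^W$: for distinct $S, S' \subseteq W$, set $S \prec_s S'$ if and only if, at the first $w$ (in the fixed order of $W$) with $\chi_S(w) \neq \chi_{S'}(w)$, one has $\chi_S(w) = s_w$. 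This is a total order on $2^W$, so its restriction to the image of $\iota$ is a linear order on $n$ elements, i.e.\ a permutation of $[n]$.

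The heart of the argument is the following observation. Let $S_1, \ldots, S_k$ be distinct subsets of $W$ and single out $S_1$. For each $i \in \{2, \ldots, k\}$ let $w^{(i)}$ be the first element of the (nonempty) symmetric difference $S_1 \triangle S_i$, and set $D = \{ w^{(i)} : 2 \le i \le k\}$, so $|D| \le k-1$. Define $p \colon D \to \{0,1\}$ by $p(w) = \chi_{S_1}(w)$; this is well defined even when some $w^{(i)}$ coincide, since the demand placed on $w^{(i)}$ by index $i$ depends only on $w^{(i)}$ and on $S_1$. Now if $s \in \{0,1\}^W$ satisfies $s|_D = p$, then for each $i$ the first coordinate at which $S_1$ and $S_i$ differ is $w^{(i)} \in D$, where $\chi_{S_1}(w^{(i)}) = s_{w^{(i)}}$, hence $S_1 \prec_s S_i$; that is, $S_1$ leads $\{S_2, \ldots, S_k\}$ in $\prec_s$. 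Consequently, if $\mathcal{S} \subseteq \{0,1\}^W$ is \emph{$(k-1)$-universal} --- meaning that for every $D \subseteq W$ with $|D| \le k-1$ and every $p \in \{0,1\}^D$ there is some $s \in \mathcal{S}$ with $s|_D = p$ --- then $\{\prec_s : s \in \mathcal{S}\}$, transported to $[n]$ through $\iota$, is a $k$-suitable family of permutations of $[n]$.

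It remains to bound the size of a $(k-1)$-universal family of $\{0,1\}^W$. I would take $m$ independent uniformly random vectors in $\{0,1\}^W$: a fixed pair $(D,p)$ with $|D| = k-1$ is missed by all of them with probability $(1 - 2^{-(k-1)})^m \le e^{-m 2^{-(k-1)}}$, and there are at most $\binom{|W|}{k-1} 2^{k-1} \le (2|W|)^{k-1}$ such pairs, so a union bound shows that $m = 1 + \lceil (k-1) 2^{k-1} \ln(2|W|) \rceil$ vectors suffice. Substituting $|W| = \lceil \log_2 n \rceil$ and using $n \ge 10^4$ to absorb the ceilings and constants (the worst ratio $\ln(2|W|)/\ln\ln n$ occurs at $n=10^4$ and tends to $1$) gives $m \le k 2^k \ln \ln n$; the degenerate range $k-1 \ge |W|$ is handled by simply taking $\mathcal{S} = \{0,1\}^W$, of size $2^{|W|} \le 2n$, which in that range is well below $k 2^k \ln\ln n$. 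The only genuinely non-routine point is the decisive-coordinate observation above --- in particular verifying that a collision $w^{(i)} = w^{(j)}$ imposes \emph{consistent} demands on $s$, so that the constraint set is a single partial assignment on at most $k-1$ coordinates; the remainder is the standard probabilistic existence proof for universal sets together with elementary estimates. (This is essentially Spencer's argument as streamlined by Scott and Wood.)
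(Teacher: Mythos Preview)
The paper does not prove this lemma at all: it is quoted as a result of Spencer, with the specific constants attributed to Scott and Wood (Lemma~5 of~\cite{scott2020better}), and is used as a black box. Your proposal supplies exactly the standard Spencer argument (encode $[n]$ in $\{0,1\}^{\lceil\log_2 n\rceil}$, use signed lexicographic orders, reduce $k$-suitability to the existence of a $(k-1)$-universal family of sign vectors, and bound that probabilistically), and the numerical check against the stated constant for $n\ge 10^4$ goes through; so there is nothing to compare beyond noting that you have filled in a proof the paper omits.

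One small wrinkle worth cleaning up: with the paper's convention, ``$x$ leads $A\setminus\{x\}$ in $\sigma$'' means $y<_\sigma x$ for every other $y$, i.e.\ $x$ is the \emph{maximum}. Your computation produces $S_1\prec_s S_i$ for all $i$, making $S_1$ the \emph{minimum}. This is harmless---replace $p(w)=\chi_{S_1}(w)$ by $p(w)=1-\chi_{S_1}(w)$, or simply reverse each $\prec_s$---but as written your use of ``leads'' is opposite to the paper's definition.
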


\begin{lemma}[Lemma 12 in \cite{scott2020better}]
	\label{lem:bipartite_partitioning}
	Let $G$ be a bipartite graph with bipartition $\{A,B\}$, where vertices in $A$ have degree at most $\Delta$ and vertices in $B$ have  degree at most $d$. 
	Let $r,t,\ell$ be positive integers such that 
	\begin{equation*}
		\ell \geq e \left(\frac{ed}{r+1}\right)^{1+1/r} \text{ and } t\geq\ln( 4 d \Delta). 
	\end{equation*}
	Then there exist $t$ colorings $c_1,\dots,c_t$ of $A$, each with $\ell$ colors, such that for each vertex $v\in B$, for some coloring $c_i$, 
	each color is assigned to at most  $r$ neighbors of $v$ under $c_i$. 
\end{lemma}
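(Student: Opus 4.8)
The plan is to build the colorings at random and certify their existence with the Lov\'asz Local Lemma. For each $i\in[t]$, let $c_i$ be obtained by assigning to each vertex of $A$ independently and uniformly at random one of the $\ell$ colors, with the $t$ colorings mutually independent. For $v\in B$, say that $c_i$ is \emph{bad for $v$} if some color is assigned by $c_i$ to more than $r$ neighbors of $v$, and let $E_v$ be the event that $c_i$ is bad for $v$ for every $i\in[t]$. The conclusion of the lemma is precisely that some outcome avoids all the events $E_v$, so it suffices to show $\Pr\bigl[\bigcup_{v\in B}E_v\bigr]<1$.

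First I would bound, for a fixed $v\in B$, the probability that a single coloring $c_i$ is bad for $v$. If $c_i$ is bad for $v$, then some $(r+1)$-subset of $N(v)$ is monochromatic under $c_i$; a fixed $(r+1)$-subset is monochromatic with probability $\ell\cdot\ell^{-(r+1)}=\ell^{-r}$, and there are at most $\binom{d}{r+1}$ such subsets since $|N(v)|\le d$. Hence $\Pr[c_i\text{ bad for }v]\le\binom{d}{r+1}\ell^{-r}\le\bigl(\tfrac{ed}{r+1}\bigr)^{r+1}\ell^{-r}$. The hypothesis $\ell\ge e\bigl(\tfrac{ed}{r+1}\bigr)^{1+1/r}$ gives $\ell^{r}\ge e^{r}\bigl(\tfrac{ed}{r+1}\bigr)^{r+1}$, so this probability is at most $e^{-r}\le e^{-1}$ (using $r\ge1$). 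Since the $t$ colorings are independent, $\Pr[E_v]\le e^{-t}$, and $t\ge\ln(4d\Delta)$ yields $\Pr[E_v]\le\tfrac{1}{4d\Delta}$.

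Next I would set up the dependency structure. The event $E_v$ is a function only of the colors assigned to $N(v)\subseteq A$, so $E_v$ and $E_{v'}$ can be dependent only when $N(v)\cap N(v')\neq\emptyset$; moreover $E_v$ is mutually independent of the collection of all $E_{v'}$ with $N(v')\cap N(v)=\emptyset$. Since $v$ has at most $d$ neighbors in $A$ and each vertex of $A$ has at most $\Delta$ neighbors in $B$, there are at most $d\Delta$ vertices of $B$ (counting $v$) sharing a neighbor with $v$, so $E_v$ is independent of all but at most $d\Delta-1$ of the other events. Taking $p:=\tfrac{1}{4d\Delta}$ and $D:=d\Delta-1$, we get $e\,p\,(D+1)\le e\cdot\tfrac{1}{4d\Delta}\cdot d\Delta=\tfrac{e}{4}<1$, so the symmetric Lov\'asz Local Lemma gives $\Pr\bigl[\bigcap_{v\in B}\overline{E_v}\bigr]>0$. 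Any outcome witnessing this provides colorings $c_1,\dots,c_t$ with the required property.

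The two probability estimates are routine; the one point needing care is matching the exponent bookkeeping — the bound $\bigl(\tfrac{ed}{r+1}\bigr)^{r+1}\ell^{-r}$ has to be collapsed exactly by the hypothesis involving $\ell^{1+1/r}$ — and getting the dependency degree right, namely that it is the number of vertices of $B$ at distance two from $v$ (at most $d\Delta$) rather than anything larger. Beyond that I do not anticipate a genuine obstacle.
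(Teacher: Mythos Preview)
The paper does not contain a proof of this lemma: it is quoted verbatim as Lemma~12 of Scott and Wood~\cite{scott2020better} and used as a black box, so there is no ``paper's own proof'' to compare against. That said, your argument is correct and is essentially the standard probabilistic proof (and matches the approach in~\cite{scott2020better}): random uniform colorings, a union bound over monochromatic $(r{+}1)$-subsets of $N(v)$ to get $\Pr[c_i\text{ bad for }v]\le e^{-r}\le e^{-1}$ from the hypothesis on $\ell$, independence across the $t$ colorings to get $\Pr[E_v]\le e^{-t}\le 1/(4d\Delta)$, and the symmetric Lov\'asz Local Lemma with dependency degree at most $d\Delta-1$. The exponent bookkeeping and the dependency count are both handled correctly.
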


We use Lemma~\ref{lem:spencers} and Lemma~\ref{lem:bipartite_partitioning} to prove the following lemma which is a prerequisite to our proof of Theorem~\ref{thm:maxdegmain}.

\begin{lemma}
\label{lem:splitGraph_partitioning}
Let $G$ be a bipartite graph with bipartition $\{A,B\}$, where vertices in $A$ have degree at most $\Delta$ and vertices in $B$ have  degree at most $d$, for some $2 \leq d \leq \Delta$.
Then,
$$\dimth(G^{*}[A,B]) \leq (81 + o(1)) d \ln{(d \Delta)} \ln\ln\Delta(2e)^{\sqrt{\ln{d}}},$$
when $d\to\infty$. 
\end{lemma}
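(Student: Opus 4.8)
The plan is to build the threshold graphs realizing $G^*[A,B]$ by combining the two combinatorial tools at our disposal. The key structural observation is that $G^*[A,B]$ is almost a split graph: $B$ is a clique and $A$ is an independent set, and the only obstruction to $G^*[A,B]$ itself being a threshold graph is that the neighborhoods of the $B$-vertices into $A$ need not be linearly ordered by inclusion (Proposition~\ref{defn_neighborhood}). So the strategy is to decompose the bipartite ``defect'' $G[A,B]$ into pieces, each of which we can represent with few threshold graphs, and then intersect. Concretely, given a threshold supergraph of $G^*[A,B]$ produced via $\tau(G^*[A,B], A, \sigma)$ for various orderings $\sigma$ of $A$, a non-edge $ub$ with $u \in A$, $b \in B$ survives (i.e.\ $ub$ becomes an edge of the supergraph) only if some neighbor $u'$ of $b$ in $A$ comes after $u$ in $\sigma$. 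Thus to kill all non-edges we want: for every $b \in B$ and every $u \in A \setminus N_G(b)$, some ordering in our collection places $u$ after all of $N_G(b)$. This is exactly what a suitable family of permutations would give if $\deg(b)$ were small.

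The first step is to reduce the degree of the $B$-side. Apply Lemma~\ref{lem:bipartite_partitioning} to $G[A,B]$ with a suitable choice of parameters $r, \ell, t$: this produces $t \approx \ln(4d\Delta)$ colorings of $A$, each with $\ell \approx e(ed/(r+1))^{1+1/r}$ colors, such that for each $b \in B$ some coloring $c_i$ assigns at most $r$ neighbors of $b$ to each color class. Fix $r = \lceil \sqrt{\ln d}\,\rceil$ so that $\ell = d^{o(1)}(2e)^{\Theta(\sqrt{\ln d})}$ and $(ed/(r+1))^{1/r} = e^{O(\sqrt{\ln d})}$. Now for each coloring $c_i$ ($i \in [t]$) and each color class $A_{i,j}$ ($j \in [\ell]$), consider the bipartite graph $G[A_{i,j}, B]$: every $b \in B$ for which $c_i$ is the ``good'' coloring has at most $r$ neighbors inside $A_{i,j}$. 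We want, for each such pair $(i,j)$, a small family of orderings of $A_{i,j}$ that is $(r+1)$-suitable — then for every $b$ and every non-neighbor $u \in A_{i,j}$, the set $N_G(b) \cap A_{i,j} \cup \{u\}$ has size at most $r+1$, so some ordering places $u$ last among it, meaning $u$ follows all of $b$'s neighbors in $A_{i,j}$.

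The second step converts each $(r+1)$-suitable family into threshold graphs. By Lemma~\ref{lem:spencers} there is an $(r+1)$-suitable family of orderings of $A_{i,j}$ of size at most $(r+1)2^{r+1}\ln\ln|A_{i,j}| \le (r+1)2^{r+1}\ln\ln\Delta$ (using $|A_{i,j}| \le n$ and bounding crudely, handling tiny $A_{i,j}$ separately). For each ordering $\pi$ in this family, extend it to an ordering $\sigma$ of all of $A$ by appending the vertices of $A \setminus A_{i,j}$ arbitrarily at the end, and take the threshold supergraph $\tau(G^*[A,B], A, \sigma)$ — this is a threshold graph containing $G^*[A,B]$ by Proposition~\ref{prop:thsupgraph}. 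The total number of threshold graphs is $t \cdot \ell \cdot (r+1)2^{r+1}\ln\ln\Delta$. Plugging in $t \le \ln(4d\Delta)(1+o(1))$, $\ell \le e \cdot e^{O(\sqrt{\ln d})}(2e)^{\sqrt{\ln d}}$ roughly, and $(r+1)2^{r+1} = 2^{(1+o(1))\sqrt{\ln d}} \cdot d^{o(1)}$, and absorbing all the $e^{O(\sqrt{\ln d})} = d^{o(1)}$ and $2^{O(\sqrt{\ln d})}$ slack into the $(2e)^{\sqrt{\ln d}}$ factor and the constant, yields the claimed bound $(81+o(1)) d\ln(d\Delta)\ln\ln\Delta(2e)^{\sqrt{\ln d}}$. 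The correctness step is to verify that the intersection of all these threshold graphs is exactly $G^*[A,B]$: every edge of $G^*[A,B]$ is in each (they are supergraphs), and for every non-edge $ub$ (necessarily $u \in A$, $b \in B$, $ub \notin E(G)$), pick the good coloring $c_i$ for $b$, let $j = c_i(u)$, and pick the ordering $\pi$ of $A_{i,j}$ witnessing $(r+1)$-suitability for the set $(N_G(b)\cap A_{i,j}) \cup \{u\}$; in the corresponding threshold supergraph, $s(b)$ restricted to $A_{i,j}$ stops before $u$, and since $A\setminus A_{i,j}$ is appended after but contributes no neighbors issue for $ub$ specifically — here one must be slightly careful — one checks directly from Definition~\ref{def:thsupgraph} that $ub$ is a non-edge.

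The main obstacle, and the part requiring the most care, is the bookkeeping of the asymptotics: one must choose $r$ as a function of $d$ so that simultaneously $\ell$, $(r+1)2^{r+1}$, and the error terms all collapse to $(2e)^{(1+o(1))\sqrt{\ln d}}$ with the leading constant working out to $81$, which forces a specific balancing (the $e \cdot e^{1/r}$-type factors in Lemma~\ref{lem:bipartite_partitioning} and the factor $2^{r+1}$ in Lemma~\ref{lem:spencers} pull in opposite directions, so $r \sim \sqrt{\ln d}$ is the sweet spot). A secondary subtlety is handling color classes $A_{i,j}$ that are too small for Lemma~\ref{lem:spencers} (fewer than $10^4$ elements, or with $r+1 > |A_{i,j}|$): for these one can use all $|A_{i,j}|!$ orderings or, better, observe that a bounded-size independent set contributes only a bounded number of threshold graphs, negligible in the final count. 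Everything else is a routine assembly of the two lemmas plus the threshold-supergraph construction.
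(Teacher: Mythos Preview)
Your overall plan---combining Lemmas~\ref{lem:bipartite_partitioning} and~\ref{lem:spencers} with $r=\lceil\sqrt{\ln d}\,\rceil$---matches the paper's, but two steps do not go through as written.

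First, extending $\pi$ to an ordering $\sigma$ of all of $A$ by appending $A\setminus A_{i,j}$ at the end does \emph{not} kill the non-edge $ub$. Even when $c_i$ is the good coloring for $b$ and $u$ succeeds every vertex of $N_G(b)\cap A_{i,j}$ in $\pi$, the vertex $b$ will in general still have neighbors in $A\setminus A_{i,j}$ (its $\leq d$ neighbors are spread over several color classes of $c_i$, not concentrated in the single class $A_{i,j}=c_i^{-1}(j)$). Those neighbors sit after $u$ in $\sigma$, so $s(b)$ in Definition~\ref{def:thsupgraph} exceeds the position of $u$ and $ub\in E(\tau(G^*[A,B],A,\sigma))$. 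The paper avoids this by first partitioning $B=B_1\uplus\cdots\uplus B_t$ according to which coloring is good for each vertex of $B$, and then working, for each $(j,k)$, with the supergraph $G_{j,k}$ of $G^*[A,B]$ obtained from $G^*[A_{j,k},B_j]$ by making every vertex outside $A_{j,k}\cup B_j$ universal; inside $G_{j,k}$ the only non-edges to destroy are between $A_{j,k}$ and $B_j$, and there the obstruction disappears. (One extra threshold graph $H$ is needed to kill the non-edges inside $A$.)

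Second, your inequality $(r{+}1)2^{r+1}\ln\ln|A_{i,j}|\le (r{+}1)2^{r+1}\ln\ln\Delta$ is unjustified: the lemma imposes no bound on $|A|$ in terms of $\Delta$, so $|A_{i,j}|$ can be arbitrarily large, and applying $(r{+}1)$-suitability directly to $A_{i,j}$ only yields a factor $\ln\ln|A|$, not $\ln\ln\Delta$. The paper inserts an extra layer to force the ground set of the suitable family to have size $O(\Delta)$: on $A_{j,k}$ it forms the auxiliary graph $G'$ in which two vertices are adjacent iff they share a common neighbor in $B_j$; since each $u\in A_{j,k}$ has at most $\Delta$ neighbors in $B_j$ and each of those has at most $r$ neighbors back in $A_{j,k}$, the graph $G'$ has maximum degree at most $r\Delta$ and can be properly colored with classes $C_1,\ldots,C_{r\Delta+1}$, so that each $b\in B_j$ has at most one neighbor in every $C_m$. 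The $(r{+}1)$-suitable family is then taken over these $r\Delta+1$ \emph{classes} (giving $\ln\ln(r\Delta+1)=(1+o(1))\ln\ln\Delta$), and from each permutation of the classes one builds \emph{two} linear orders of $A_{j,k}$---the class blocks in the permuted order, once with each block listed forward and once reversed---to handle the case where $u$ lies in the same class as the unique neighbor of $b$ there. This extra factor $2$, together with $e^{2+1/r}\le e^3$, is what produces the constant $4e^3<81$.
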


\begin{proof}
	We follow the proof idea of Lemma 13 in~\cite{scott2020better}. 
	Let $r = \left\lceil \sqrt{\ln{d}} \right\rceil$, $\ell = \left\lceil e \left(\frac{ed}{r+1}\right)^{1+1/r} \right\rceil$, and $t = \lceil \ln( 4 d \Delta) \rceil.$ 
	Hence, we know from Lemma~\ref{lem:bipartite_partitioning} that there exist $t$ colorings $c_1, c_2, \dots,c_t$ of $A$, each with $\ell$ colors, such that for each vertex $v \in B$, for some coloring $c_j$, each color is assigned to at most $r$ neighbors of $v$ under $c_j$. 
	To obtain the threshold dimension of $G^{*}[A,B]$ 
	we further partition $B$ sequentially into $t$ parts, namely $B_1,B_2, \ldots, B_t$, based on $t$ colorings of $A$. A vertex $v \in B$ is in $B_j$ if and only if $j$ is the smallest integer such that each color appears on at most $r$ neighbors of $v$ under $c_j$. 
	For a particular coloring $c_j$ and $1 \leq k \leq \ell$, we define $A_{j,k}$ as the set containing all the vertices $v \in A$ such that $c_j(v) = k$. 
	Let $G_{j,k}$ be the  supergraph of $G^{*}[A,B]$ obtained from $G^*[A_{j,k},B_j]$ by adding all the vertices that are not present in $A_{j,k} \cup B_j$ as universal vertices. Let $H$ be the threshold supergraph of $G^{*}[A,B]$  defined as: $V(H) = A \cup B$, $E(H) = \{uv\colon u \in B,~v\in V(H)\setminus\{u\}\}$. Then we have the following:
	\begin{equation}
		G^{*}[A,B] = H \cap \left(\bigcap \limits_{1 \leq j \leq t} \bigcap \limits_{1 \leq k \leq \ell} G_{j,k}\right).
	\end{equation}
	Now we are going to calculate $\dimth(G_{j,k})$. In order to use the kind of threshold supergraphs defined in Definition~\ref{def:thsupgraph}, we need an ordering of the vertices in $A_{j,k}$, which is an independent set in $G_{j,k}$. Let $G'$ denote the graph with $V(G') = A_{j,k}$ and two vertices $x,y \in A_{j,k}$ are adjacent in $G'$ if and only if they have a common neighbor in $B_j$. We properly color $G'$ using $r\Delta + 1$ colors as the maximum degree of a vertex in $G'$ is at most $r\Delta$. Let the color classes be $C_1, C_2, \ldots , C_{r\Delta + 1}$.
	Then $A_{j,k} = C_1 \uplus C_2 \uplus \cdots \uplus C_{r\Delta + 1}$ and in $G_{j,k}$, every vertex in $B_j$ has at most one neighbor in each color class $C_i$.  We determine the ordering of the vertices in $A_{j,k}$ based on an $(r+1)$-suitable family of permutations, $\sigma_1, \sigma_2, \ldots, \sigma_p$, of $C_1, C_2, \ldots , C_{r\Delta + 1}$. From Lemma~\ref{lem:spencers}, we can assume that $p \leq (r+1) 2^{(r+1)} \ln{\ln{(r\Delta+1)}}$ . 
	From each $\sigma_a$, where $1 \leq a \leq p$, we construct two linear orderings $\sigma_a^{1}$ and $\sigma_a^{2}$ of $A_{j,k}$ as described below:
	$$\sigma_a^{1}:= \psi_{\sigma_a(1)}, \psi_{\sigma_a(2)}, \ldots , \psi_{\sigma_a(r \Delta +1)}~,$$
	$$\sigma_a^{2}:= \psi_{\sigma_a(1)}^{-1}, \psi_{\sigma_a(2)}^{-1}, \ldots , \psi_{\sigma_a(r \Delta +1)}^{-1}~.$$
	In the above, for $1\leq i\leq r\Delta+1$, $\psi_i$ denotes an arbitrary ordering of the vertices of $C_i$
	and $\psi_i^{-1}$ denotes the reverse of $\psi_i$. Now that we have total orderings $\sigma_a^{1}$ and $\sigma_a^{2}$ of $A_{j,k}$, we consider the two threshold supergraphs $\tau(G_{j,k},A_{j,k},\sigma_a^{1})$ and $\tau(G_{j,k},A_{j,k},\sigma_a^{2})$. 
	\begin{claim}
		$G_{j,k} = \bigcap \limits_{1 \leq a \leq p} (\tau(G_{j,k},A_{j,k},\sigma_a^{1}) \cap \tau(G_{j,k},A_{j,k},\sigma_a^{2})).$
	\end{claim}
	\begin{proof}
		It is clear from Definition \ref{def:thsupgraph}, we know that if $uv \in E(G_{j,k})$, then $uv$ is present in both $\tau(G_{j,k},A_{j,k},\sigma_a^{1})$ and $\tau(G_{j,k},A_{j,k},\sigma_a^{2})$, $\forall a \in [p]$. Hence we only need to show that if $uv \notin E(G_{j,k})$ then there exists at least one threshold supergraph in the collection where $u$ and $v$ are non-adjacent. If $u,v \in A_{j,k}$ then $uv \notin E(\tau(G_{j,k},A_{j,k},\sigma_a^{1}))$ and $uv \notin E(\tau(G_{j,k},A_{j,k},\sigma_a^{2}))$, $\forall a \in [p]$. Without loss of generality, assume $u \in A_{j,k}$ and $v \in B_j$. Also assume that $u$ belongs to the color class $C \in \{C_1, C_2, \ldots , C_{r\Delta + 1}\}$. We know from the property of the color classes $C_i$ that $v$ has at most one neighbor in every $C_i$ (in particular, in $C$). Suppose $|N_{G_{j,k}}(v) \cap C| = 0$. 
		We know that a vertex $v \in B_j$ has at most $r$ neighbors in $A_{j,k}$. Since we have performed $(r+1)$-suitability on the color classes $C_1, C_2, \ldots , C_{r\Delta + 1}$, there exists a permutation $\sigma \in \{\sigma_1, \sigma_2, \ldots, \sigma_p\}$ where $C$ succeeds all the color classes that contain a neighbor of $v$. Thus, $u$ succeeds all the neighbors of $v$ in $A_{j,k}$ in both $\sigma^{1}$ and $\sigma^{2}$. Hence, $u$ and $v$ are non-adjacent in both $\tau(G_{j,k},A_{j,k},\sigma^{1})$ and $\tau(G_{j,k},A_{j,k},\sigma^{2})$. Suppose $|N_{G_{j,k}}(v) \cap C| = 1$. Let $\{w\} = N_{G_{j,k}}(v) \cap C$. There exists a permutation $\sigma \in \{\sigma_1, \sigma_2, \ldots, \sigma_p\}$ such that $C$ succeeds all the other color classes that contain a neighbor of $v$ in $\sigma$. Then, $w$ succeeds all the neighbors of $v$ in $A_{j,k}$ in both $\sigma^{1}$ and $\sigma^{2}$. Since $u$ succeeds $w$ in one of $\sigma^1$ or $\sigma^2$, it follows that $u$ and $v$ are non-adjacent in either $\tau(G_{j,k},A_{j,k},\sigma^{1})$ or $\tau(G_{j,k},A_{j,k},\sigma^{2})$. 
		\hfill\qed
	\end{proof}
	Therefore, $\dimth(G_{j,k}) \leq 2p$. Now from (1) we can write:
	$$
	\dimth(G^{*}[A,B]) \leq 1 + 2p t \ell 
	$$
	Before substituting  the values of $p, \ell,$ and $t$ in the above inequality, we simplify them below.
	\begin{eqnarray*}
	p&\leq&(r+1) 2^{r+1} \ln\ln(r\Delta+1) \leq (r+1) 2^{r+1} \ln\ln\left(r\Delta\left(1+\frac{1}{r\Delta}\right)\right)\\
&\leq&(r+1) 2^{r+1} \ln\ln\left(r\Delta \cdot e^{\frac{1}{r\Delta}}\right)
 =(r+1) 2^{r+1} \ln\left(\ln{r\Delta} + \frac{1}{r\Delta}\right)\\
 &\leq&(r+1) 2^{r+1} \ln\left(\ln\Delta\left(1 + \frac{r\Delta\ln r + 1 }{r\Delta\ln\Delta}\right)\right) = (r+1) 2^{r+1} \ln{(\ln\Delta(1 + o(1)))}\\
  &=&(1+o(1))(r+1) 2^{r+1} \ln\ln\Delta\\
	t&=& \lceil \ln{(4d\Delta)} \rceil \leq \ln{4} + \ln{(d\Delta)} + 1 = \left(1+\frac{1+ \ln{4}}{\ln{(d\Delta)}}\right)\ln{(d\Delta)} \leq (1+o(1))\ln{(d\Delta)}\\
\ell&=&\lceil e\left(\frac{ed}{r+1}\right)^{1+\frac{1}{r}} \rceil \leq  e^{2 + \frac{1}{r}} \cdot \big ( \frac{d}{r+1}\big)^{1+\frac{1}{r}} + 1 \leq  e^3 \cdot \left( \frac{d}{r+1}\right)^{1+\frac{1}{r}} + 1\\
&=&(1+o(1))e^{3} \left( \frac{d}{r+1}\right)^{1+\frac{1}{r}}
\end{eqnarray*}

	\begin{align*}
		\dimth(G^{*}[A,B]) &\leq 1 + \bigg(2 \cdot (1+o(1))(r+1) 2^{r+1} \ln\ln\Delta\\
		&\hspace{.75in}\cdot (1+o(1))\ln{(d\Delta)}\cdot (1+o(1))e^{3} \left( \frac{d}{r+1}\right)^{1+\frac{1}{r}}\bigg)\\
		&\leq 1 + \left(4 e^3(1+o(1)) d \ln{(d \Delta)} \ln\ln\Delta(2^r d^{\frac{1}{r}}) \frac{1}{(r+1)^{\frac{1}{r}}}\right)\\
		&= 1 + \left((4 e^3 + o(1)) d \ln{(d \Delta)} \ln\ln\Delta(2e)^{\sqrt{\ln{d}}}\right)\\
		& \leq (81 + o(1)) d \ln{(d \Delta)} \ln\ln\Delta(2e)^{\sqrt{\ln{d}}}.
	\end{align*}
\hfill\qed
\end{proof}

\subsection{Proof of the main theorem}
We need the following partitioning lemma by Scott and Wood.
\begin{corollary}[Corollary 11 in \cite{scott2020better}]
	\label{cor:partitioning_lemma}
	For every graph $G$ with maximum degree $\Delta\geq 2$ and for all integers $d\geq 100 \ln\Delta$ and  $k\geq \frac{3\Delta}{d}$,  there is a partition $V_1,\dots,V_k$ of $V(G)$, such that $|N_G(v)\cap V_i| \leq d$ for each $v\in V(G)$ and $i\in[k]$. 
\end{corollary}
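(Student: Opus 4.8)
The plan is to prove Corollary~\ref{cor:partitioning_lemma} by the standard ``random partition plus Lov\'asz Local Lemma'' argument.

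First I would dispose of the trivial case $d\geq\Delta$: here any partition works, since $|N_G(v)\cap V_i|\leq\deg_G(v)\leq\Delta\leq d$ for every $v$ and $i$; so one may assume $d<\Delta$, which together with $d\geq 100\ln\Delta$ forces $\Delta$ to be large. Next I would reduce to the case $k=k_0:=\lceil 3\Delta/d\rceil$: from any partition $V_1,\dots,V_{k_0}$ with the required property, appending $k-k_0$ empty parts yields a valid partition into $k$ parts with the same property, so it suffices to treat the smallest admissible value of $k$. (Fixing $k=k_0$ is what keeps the dependency count below polynomial in $\Delta$.)

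Now assign each vertex of $G$ to one of the $k_0$ parts independently and uniformly at random, and for each $v\in V(G)$ and $i\in[k_0]$ let $A_{v,i}$ be the ``bad'' event that $|N_G(v)\cap V_i|>d$. The quantity $|N_G(v)\cap V_i|$ is a sum of $\deg_G(v)$ independent indicators with mean $\mu=\deg_G(v)/k_0\leq\Delta/k_0\leq d/3$, where the last step uses $k_0\geq 3\Delta/d$; a Chernoff bound of the form $\Pr[X\geq t]\leq(e\mu/t)^t$ then gives $\Pr[A_{v,i}]\leq(e/3)^{d}$, which is at most $\Delta^{-9}$ since $d\geq 100\ln\Delta$ and $\ln 3-1>0.098$. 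The event $A_{v,i}$ depends only on the part assignments of the vertices of $N_G(v)$, hence is mutually independent of all $A_{u,j}$ with $N_G(u)\cap N_G(v)=\emptyset$; the vertices $u$ sharing a neighbor with $v$ all lie in $\bigcup_{w\in N_G(v)}N_G(w)$, so there are at most $\Delta^2$ of them, and each spawns $k_0\leq 3\Delta$ events, giving a dependency degree of at most $3\Delta^{3}$.

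It then remains to verify the symmetric Lov\'asz Local Lemma condition $e\cdot\Delta^{-9}\cdot(3\Delta^{3}+1)\leq 1$, which holds comfortably because $\Delta$ is large in the non-trivial regime; the lemma then guarantees that with positive probability no $A_{v,i}$ occurs, and the corresponding partition is as required. I do not foresee a real obstacle: the argument is routine, and the only points needing care are the bookkeeping that makes the mean at most $d/3$ (which is precisely where the hypothesis $k\geq 3\Delta/d$ enters) and the verification that the constant $100$ in $d\geq 100\ln\Delta$ is large enough for $\Delta^{-9}$ to beat the polynomial dependency degree in the local-lemma inequality.
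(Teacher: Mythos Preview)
The paper does not give its own proof of this statement: it is quoted verbatim as Corollary~11 of Scott and Wood~\cite{scott2020better} and used as a black box in the proof of Theorem~\ref{thm:maxdegmain}. Your argument --- random assignment of vertices to parts, Chernoff to bound each bad event by $(e/3)^d\leq\Delta^{-9}$, and the symmetric Lov\'asz Local Lemma with dependency degree $O(\Delta^3)$ --- is correct and is exactly the standard proof of such equitable-neighbourhood partitioning lemmas (and is, in essence, how Scott and Wood prove it). The reductions you make at the start (disposing of $d\geq\Delta$ and padding with empty parts to reduce to $k=\lceil 3\Delta/d\rceil$) are the right way to set things up, and the numerical checks ($\ln 3-1>0.098$, hence $100(\ln 3-1)>9$; and $e\cdot\Delta^{-9}\cdot(3\Delta^3+1)\leq 1$ in the regime $\Delta>100\ln\Delta$) all go through.
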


\begin{theorem}
\label{thm:maxdegmain}
For a graph $G$ with maximum degree $\Delta$,
\begin{equation*}
\dimth(G) \leq (24300 + o(1)) \Delta \ln^{2}{\Delta} \ln\ln\Delta(2e)^{\sqrt{(1+o(1))\ln{\ln{\Delta}}}},
\end{equation*}
when $\Delta \rightarrow \infty$.
\end{theorem}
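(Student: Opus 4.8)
The plan is to combine Corollary~\ref{cor:partitioning_lemma} with Lemma~\ref{lem:splitGraph_partitioning} in the same spirit as Scott and Wood's boxicity argument. First I would fix $\Delta$ large and choose a parameter $d$ roughly of the order $\Theta(\ln^2\Delta)$ (say $d = \lceil c\ln^2\Delta\rceil$ for a suitable constant $c$; any $d\geq 100\ln\Delta$ works for the corollary, and this choice will make the final exponent come out as $(2e)^{\sqrt{(1+o(1))\ln\ln\Delta}}$ since $\sqrt{\ln d}=\sqrt{(2+o(1))\ln\ln\Delta}$, hmm, I'd need to re-examine the constant in the exponent — taking $d=\Theta(\ln^2\Delta)$ gives $\sqrt{\ln d}=\sqrt{(1+o(1))\cdot 2\ln\ln\Delta}$, so perhaps the intended choice is $d=\Theta(\ln\Delta)$ after all, giving $\sqrt{\ln d}=\sqrt{(1+o(1))\ln\ln\Delta}$; I would pick $d=\lceil 100\ln\Delta\rceil$). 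Then set $k=\lceil 3\Delta/d\rceil = \Theta(\Delta/\ln\Delta)$ and apply Corollary~\ref{cor:partitioning_lemma} to get a partition $V_1,\dots,V_k$ of $V(G)$ with $|N_G(v)\cap V_i|\leq d$ for all $v$ and all $i$.

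Next I would express $G$ as an intersection of threshold graphs built from the bipartite pieces between parts and from the parts themselves. Concretely, for each ordered pair $(i,j)$ with $i\neq j$, consider the bipartite graph $G[V_i,V_j]$: every vertex of $V_j$ has at most $d$ neighbours in $V_i$, and trivially every vertex has degree at most $\Delta$, so Lemma~\ref{lem:splitGraph_partitioning} applies (with $A=V_i$, $B=V_j$, the roles chosen so the $B$-side degree bound is $d$) to give $\dimth(G^*[V_i,V_j])\leq (81+o(1))d\ln(d\Delta)\ln\ln\Delta\,(2e)^{\sqrt{\ln d}}$. For the within-part edges, each induced subgraph $G[V_i]$ has maximum degree at most $d$, so one can handle $G[V_i]$ cheaply — e.g. via Theorem~\ref{thm_thresh_dim_treewidth} or a direct threshold-supergraph argument — contributing a lower-order term. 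The key structural claim is that $G$ equals the intersection, over all these $O(k^2)$ pieces, of the graphs $G^*[V_i,V_j]$ (each padded with universal vertices outside $V_i\cup V_j$) together with the pieces covering the within-part non-edges: a non-edge $uv$ of $G$ with $u\in V_i$, $v\in V_j$ is killed in $G^*[V_i,V_j]$, and a non-edge inside some $V_i$ is killed in the gadget for $G[V_i]$. Summing, $\dimth(G)\leq k^2\cdot(81+o(1))d\ln(d\Delta)\ln\ln\Delta(2e)^{\sqrt{\ln d}} + (\text{lower order})$.

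Then it remains to substitute and simplify. With $k=\Theta(\Delta/d)$ we get $k^2 d = \Theta(\Delta^2/d)$, which is $\Theta(\Delta^2/\ln\Delta)$ — too large. So the partition step must be applied more carefully: rather than taking all $k^2$ ordered pairs, I would follow Scott--Wood and note that it suffices to take, for each part $V_j$, the bipartite graph between $V_j$ and the union $\bigcup_{i\neq j}V_i$ restricted so that each vertex of $V_j$ still sees at most $d$ neighbours per part — or more directly, to iterate the partition into $\Theta(\ln\Delta)$ rounds so that the relevant bipartite degree on the $B$-side stays $O(d)$ while only $O(\Delta/d\cdot\ln\Delta)$ pieces are used in total, or use that boxicity/threshold dimension of a graph is at most the sum over parts. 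The cleanest route: apply the partition so that $G=\bigcap_j G_j$ where $G_j$ is $G$ with all edges inside $V(G)\setminus V_j$ completed, each $G_j$ reduces to a $G^*[V_j,\,\cdot\,]$-type graph with $B$-side degree $\leq\Delta$ and $A$-side degree $\leq d$ (swap the roles), giving $k$ pieces each of threshold dimension $\leq(81+o(1))\Delta\ln(\Delta d)\ln\ln\Delta(2e)^{\sqrt{\ln d}}$... still multiplied by $k\approx 3\Delta/d$. The arithmetic that makes it work is $k\cdot d = O(\Delta)$ in the bound $\dimth(G^*[A,B])=O(d\ln(d\Delta)\ln\ln\Delta(2e)^{\sqrt{\ln d}})$ applied with $A=V_i$ so the $d$ is the $B$-degree — then $\sum_i \dimth(\text{piece}_i)\leq k\cdot O(d\cdots)=O(\Delta\ln(\Delta d)\ln\ln\Delta(2e)^{\sqrt{\ln d}})$, and with $d=\Theta(\ln\Delta)$ this is $O(\Delta\ln^2\Delta\cdot\ln\ln\Delta\cdot(2e)^{\sqrt{(1+o(1))\ln\ln\Delta}})$, matching the statement; tracking the constant $3\cdot 81\cdot$ (plus factors of $2$ and $e^3$ absorbed into the $81$) should land on $24300$.

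The main obstacle I expect is bookkeeping the pieces so that their \emph{number} times the per-piece bound gives only one extra factor of $\Delta/d$ (not $(\Delta/d)^2$), and simultaneously arranging the degree roles in Lemma~\ref{lem:splitGraph_partitioning} so that the cheap parameter $d$ sits on the $B$-side (where it multiplies the whole bound) while $\Delta$ sits only inside logarithms. Getting the constant exactly $24300 = 3\cdot 2\cdot 4050$, or rather tracing $3$ (from $k\leq 3\Delta/d$), the $81$, and the ceiling/lower-order slack, is the routine-but-delicate part; I would carry the $(1+o(1))$ factors symbolically and only at the end replace $d$ by its chosen value and collect constants.
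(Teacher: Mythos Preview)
Your overall strategy --- apply Corollary~\ref{cor:partitioning_lemma} with $d=\lceil 100\ln\Delta\rceil$, $k=\lceil 3\Delta/d\rceil$, then feed the resulting pieces to Lemma~\ref{lem:splitGraph_partitioning} --- is exactly what the paper does, and your parameter choices are correct. But there is a genuine gap in the decomposition step, and it is precisely the step that accounts for the second $\ln\Delta$ factor and the constant $24300$.

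In your ``cleanest route'' you take the $k$ pieces $G^{*}[V_i,\,V(G)\setminus V_i]$. These are \emph{not} supergraphs of $G$: by definition $G^{*}[A,B]$ keeps $A$ independent, so every edge of $G$ with both endpoints in $V_i$ is deleted in the $i$-th piece, and hence is missing from the intersection. So $\bigcap_i G^{*}[V_i,V(G)\setminus V_i]\neq G$, and Lemma~\ref{lem:splitGraph_partitioning} (which is stated for a \emph{bipartite} $G$ with bipartition $\{A,B\}$, i.e.\ $A$ independent) does not apply to any modified piece in which $V_i$ carries its internal edges. Your earlier remark that ``$G[V_i]$ has maximum degree at most $d$, so one can handle $G[V_i]$ cheaply'' does not rescue this, because you need threshold \emph{supergraphs} of the whole graph $G$, not a threshold cover of the subgraph $G[V_i]$. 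The paper's fix is the step you are missing: properly colour each $G[V_i]$ with $d+1$ colours to split $V_i$ into independent sets $V_i^1,\dots,V_i^{d+1}$, and use the $k(d+1)$ pieces $G^{*}[V_i^j,\,V(G)\setminus V_i^j]$. Now each $V_i^j$ really is independent, each piece is a supergraph of $G$, the intersection is exactly $G$, and in each piece the $B$-side degree is still at most $|N_G(v)\cap V_i|\leq d$, so Lemma~\ref{lem:splitGraph_partitioning} applies verbatim.

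This extra factor $(d+1)$ is not lower order: it contributes one full $\ln\Delta$ and the constant $100$. Indeed your own final arithmetic, $k\cdot O(d\ln(d\Delta)\ln\ln\Delta\,(2e)^{\sqrt{\ln d}})=O(\Delta\ln\Delta\ln\ln\Delta\,(2e)^{\sqrt{\ln d}})$, has only \emph{one} $\ln\Delta$, not the $\ln^{2}\Delta$ in the statement --- that discrepancy is the signature of the missing colouring step. With it, the count becomes $k(d+1)\cdot(81+o(1))d\ln(d\Delta)\ln\ln\Delta(2e)^{\sqrt{\ln d}}\leq (243+o(1))\Delta d\ln\Delta\ln\ln\Delta(2e)^{\sqrt{\ln d}}$, and substituting $d=\lceil 100\ln\Delta\rceil$ yields the claimed $(24300+o(1))\Delta\ln^{2}\Delta\ln\ln\Delta(2e)^{\sqrt{(1+o(1))\ln\ln\Delta}}$.
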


\begin{proof}
	Let $d = \lceil 100 \ln{\Delta} \rceil$ and $k = \lceil \frac{3\Delta}{d} \rceil$. Using Corollary \ref{cor:partitioning_lemma}, we get a partition of $V(G)$ into $k$ parts, $V_1,V_2,\ldots,V_k$, such that for any vertex $v \in V(G)$, $|N_G(v) \cap V_i| \leq d$, where $1 \leq i \leq k$. Since the maximum degree of $G[V_i]$ is $d$, we can do a proper coloring of $G[V_i]$ using $d+1$ colors. Therefore, $\forall i \in [k]$ each part $V_i$ can further be partitioned into $d+1$ parts, namely $V_i^1, V_i^2, \ldots, V_i^{d+1}$, where each part is an independent set in $G$. 
	\begin{claim}
		\begin{equation*}
			G= \bigcap \limits_{1 \leq i \leq k} \bigcap \limits_{1 \leq j \leq d+1} G^{*}[V_i^j, V(G) \setminus V_i^j].
		\end{equation*}
	\end{claim}
	\begin{proof}
		From the fact that $V_i^j$ is an independent set in $G$ and from the construction of $G^{*}[V_i^j, V(G) \setminus V_i^j]$, it is clear that $G^{*}[V_i^j, V(G) \setminus V_i^j]$, for $i \in [k], j \in [d+1]$, is a supergraph of $G$. Suppose that $uv \notin E(G)$.
		If $u, v \in V_i^j$ for some $i \in [k]$ and $j\in [d+1]$, then $u$ and $v$ are non-adjacent in $G^{*}[V_i^j, V(G) \setminus V_i^j]$. Otherwise, $u\in V_i^j$ for some $i\in [k]$ and $j\in [d+1]$, and $v\in V(G)\setminus V_i^j$, in which case $u$ and $v$ are non-adjacent in $G^{*}[V_i^j, V(G) \setminus V_i^j]$.
		\hfill\qed
	\end{proof}
	Applying Lemma~\ref{lem:splitGraph_partitioning} we can write, 
	\begin{align*}
		\dimth(G) 
		& \leq k \cdot (1+o(1))d \cdot (81 + o(1)) d \ln{(d \Delta)} \ln\ln\Delta(2e)^{\sqrt{\ln{d}}} \\
		& \leq (243 + o(1)) \Delta d \ln{\Delta} \ln\ln\Delta(2e)^{\sqrt{\ln{d}}}\qquad\left(\mbox{since }k = \left\lceil \frac{3\Delta}{d}\right\rceil \right)\\
		& \leq (24300 + o(1)) \Delta \ln^{2}{\Delta} \ln\ln\Delta(2e)^{\sqrt{(1+o(1))\ln{\ln{\Delta}}}}\\
		&\hspace{2.5in}(\mbox{since }d = \left\lceil 100 \ln \Delta \right\rceil )  
	\end{align*}
\hfill\qed
\end{proof}

Since $(2e)^{\sqrt{(1+o(1))\ln{\ln{\Delta}}}} \ln\ln\Delta = (\ln{\Delta})^{\frac{\ln(2e)\sqrt{(1+o(1))\ln\ln\Delta} }{\ln\ln\Delta}+ \frac{\ln{\ln\ln\Delta}}{\ln\ln\Delta}} = \ln^{o(1)}{\Delta}$ we get the following corollary.
\begin{corollary}
$$\dimth(\Delta) \in O(\Delta \ln^{2+o(1)}{\Delta}).$$
\end{corollary}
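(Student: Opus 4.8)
The plan is to derive the corollary directly from Theorem~\ref{thm:maxdegmain} by an asymptotic simplification of the trailing factors in that bound. By definition, $\dimth(\Delta)$ is the maximum of $\dimth(G)$ over all graphs $G$ of maximum degree $\Delta$, so it suffices to show that the upper bound supplied by Theorem~\ref{thm:maxdegmain} is itself of the form $O(\Delta \ln^{2+o(1)}\Delta)$.

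First I would isolate the part of the bound that is not already a fixed power of $\ln\Delta$, namely the factor $(2e)^{\sqrt{(1+o(1))\ln\ln\Delta}}\cdot \ln\ln\Delta$, and rewrite it as a power of $\ln\Delta$. Taking natural logarithms, this quantity equals $\exp\!\left(\ln(2e)\sqrt{(1+o(1))\ln\ln\Delta} + \ln\ln\ln\Delta\right)$, which is $(\ln\Delta)^{\epsilon(\Delta)}$ with $\epsilon(\Delta) = \dfrac{\ln(2e)\sqrt{(1+o(1))\ln\ln\Delta}}{\ln\ln\Delta} + \dfrac{\ln\ln\ln\Delta}{\ln\ln\Delta}$, exactly the exponent that appears in the displayed computation just before the corollary statement.

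Then I would observe that both summands in $\epsilon(\Delta)$ tend to $0$ as $\Delta\to\infty$: the first because $\sqrt{\ln\ln\Delta}/\ln\ln\Delta = 1/\sqrt{\ln\ln\Delta}\to 0$ (the $(1+o(1))$ under the square root merely rescales this vanishing quantity, since $\sqrt{(1+o(1))\ln\ln\Delta} = (1+o(1))\sqrt{\ln\ln\Delta}$), and the second because $\ln x/x\to 0$ with $x=\ln\ln\Delta$. Hence $\epsilon(\Delta)=o(1)$ and the trailing factor is $\ln^{o(1)}\Delta$. Substituting this back into Theorem~\ref{thm:maxdegmain} and absorbing the constant $24300+o(1)$ into the big-$O$, we obtain $\dimth(G) \le O\!\left(\Delta\ln^2\Delta\cdot\ln^{o(1)}\Delta\right) = O\!\left(\Delta\ln^{2+o(1)}\Delta\right)$ for every graph $G$ of maximum degree $\Delta$, and therefore $\dimth(\Delta)\in O(\Delta\ln^{2+o(1)}\Delta)$.

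I do not expect any real obstacle here: all the substance lies in Theorem~\ref{thm:maxdegmain}, and the corollary is a routine piece of asymptotic bookkeeping. The only point deserving a line of care is confirming that the inner $(1+o(1))$ inside the square root does not spoil the estimate, which is handled by the observation above that it only scales the already-vanishing exponent $\epsilon(\Delta)$.
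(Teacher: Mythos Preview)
Your proposal is correct and follows exactly the paper's approach: the paper derives the corollary from Theorem~\ref{thm:maxdegmain} by the very computation you describe, writing $(2e)^{\sqrt{(1+o(1))\ln\ln\Delta}}\ln\ln\Delta=(\ln\Delta)^{\epsilon(\Delta)}$ with the same exponent and noting $\epsilon(\Delta)=o(1)$. There is nothing to add.
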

\section{Threshold dimension and degeneracy}
\label{sec_degeneracy}
Given a graph $G$ and a positive integer $k$, an ordering of the vertices of $G$ such that no vertex has more than $k$ neighbors after it is called a \emph{$k$-degenerate ordering} of $G$.  We say a graph is \emph{$k$-degenerate} if it has a $k$-degenerate ordering. The minimum $k$ such that $G$ is $k$-degenerate is called the \emph{degeneracy} of $G$. From its definition, it is clear that the degeneracy of a graph is at most its  maximum degree. 
In this section, we derive upper bounds on the threshold dimension of a graph in terms of its degeneracy. The techniques we adopt are mostly inspired by those in~\cite{AdiSunRog}.

Throughout this section, we shall assume that $G$ is a $k$-degenerate graph on $n$ vertices with vertex set $\{v_1, v_2,\ldots , v_n\}$ and that $v_1, v_2, \ldots , v_n$ is a $k$-degenerate ordering of $G$. Thus, for each $i\in\{1,2,\ldots,n\}$,  $|N_G(v_i)\cap\{v_{i+1},v_{i+2},\ldots,v_n\}| \leq k$. The vertices in $N_G(v_i)\cap\{v_{i+1},v_{i+2},\ldots,v_n\}$ are called the \emph{forward neighbors} of $v_i$. Let  $i < j$ and $v_iv_j \notin E(G)$. A coloring $f$ of the vertices of $G$ is \emph{desirable} for  the non-adjacent pair $(v_i,v_j)$ if (i) $f$ is a proper coloring, and (ii) $f(v_j) \neq f(v_t)$, for all neighbors $v_t$ of $v_i$ such that $t>j$.  

\begin{lemma}
\label{lem_desirable_coloring}
Let $G$ be a $k$-degenerate graph on $n$ vertices and let $v_1, v_2, \ldots , v_n$ be a $k$-degenerate ordering of $G$. Let $r = \lceil \ln n\rceil$. Then there is a collection $\{f_1, \ldots , f_r\}$, where each $f_i\colon V(G)\rightarrow [10k]$ is a proper coloring of the vertices of $G$, such that for every non-adjacent pair $(v_i,v_j)$, where $i<j$, there exists an $\ell \in [r]$ such that $f_{\ell}$ is a desirable coloring for the pair $(v_i,v_j)$.  
\end{lemma}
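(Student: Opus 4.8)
The plan is to use a randomized argument. Fix a single random coloring $f\colon V(G) \to [10k]$ obtained by assigning to each vertex $v_i$ a color chosen independently and uniformly at random from $[10k]$. I will first estimate the probability that $f$ fails to be a desirable coloring for a fixed non-adjacent pair $(v_i, v_j)$ with $i < j$, and then take $r = \lceil \ln n \rceil$ independent copies $f_1, \dots, f_r$ and apply a union bound over all (at most $\binom{n}{2}$) non-adjacent pairs to show that with positive probability, every non-adjacent pair is served by at least one $f_\ell$.

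The key computation is the following. For a single random coloring $f$ and a fixed non-adjacent pair $(v_i, v_j)$ with $i < j$, the coloring $f$ is desirable for $(v_i, v_j)$ provided two things hold: (a) $f$ is a proper coloring of $G$, and (b) $f(v_j) \neq f(v_t)$ for every neighbor $v_t$ of $v_i$ with $t > j$. Rather than demanding that $f$ be proper globally in one shot (which is too much to ask of a random coloring), the standard trick from~\cite{AdiSunRog} is to make the coloring proper by a local recoloring fix while preserving the relevant colors; alternatively, one restricts attention to the bad events that actually matter. Concretely: the forward neighbors of $v_i$ number at most $k$, so condition (b) fails with probability at most $k \cdot \frac{1}{10k} = \frac{1}{10}$. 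For properness, one can exploit the $k$-degenerate ordering — process vertices from $v_n$ backwards, and the probability that $v_s$ receives a color clashing with one of its at most $k$ forward neighbors is at most $\frac{k}{10k} = \frac{1}{10}$; summing the bad events along the ordering and combining with (b), one argues (after the appropriate conditioning/recoloring argument as in~\cite{AdiSunRog}) that a single random coloring is desirable for a fixed pair $(v_i,v_j)$ with probability at least some constant bounded away from $0$, say at least $1/2$ after optimizing the constant $10$. Taking $r$ independent colorings, the probability that none of $f_1,\dots,f_r$ is desirable for $(v_i,v_j)$ is at most $(1/2)^r \le (1/e)^{\ln n} = 1/n < \binom{n}{2}^{-1}$ — wait, here one needs the per-coloring failure probability $\rho$ to satisfy $\rho^{r} \cdot \binom{n}{2} < 1$, i.e. $r \ln(1/\rho) > \ln\binom{n}{2} \approx 2\ln n$; with $r = \lceil \ln n\rceil$ this forces $\rho < e^{-2}$, so the constant $10$ is chosen precisely so that the single-coloring failure probability for a fixed pair is below $e^{-2}$. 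A union bound over all non-adjacent pairs then gives that with positive probability the collection $\{f_1,\dots,f_r\}$ works simultaneously for every non-adjacent pair, which proves existence.

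The main obstacle I anticipate is the properness requirement interacting with the desirability condition: a uniformly random coloring is almost never globally proper, so one cannot simply bound $\Pr[f \text{ not proper}]$ by a small constant. The resolution — and the technical heart borrowed from~\cite{AdiSunRog} — is to observe that we do not need $f$ itself to be proper; we need, for the pair $(v_i,v_j)$, a proper coloring that agrees with $f$ on the colors relevant to condition (b). One handles this by a careful sequential/conditional analysis along the degenerate ordering: expose colors from $v_n$ down to $v_1$, and show that conditioned on the colors of $v_j$ and the forward neighbors of $v_i$ being "good" for (b), one can still complete to a proper coloring with the right conditional probability, so that the events (a) and (b) can be combined with only a constant-factor loss. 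Getting the constants to land at $10k$ colors and $\lceil \ln n\rceil$ colorings is then a matter of bookkeeping. Once Lemma~\ref{lem_desirable_coloring} is in hand, each color class of each $f_\ell$ yields an independent set that can be used to build a threshold supergraph via Definition~\ref{def:thsupgraph}, and the desirability property guarantees that every non-edge of $G$ is killed in at least one such threshold graph — but that deduction belongs to the proof of the $\dimth(G) \le 10k\ln n$ bound, not to this lemma.
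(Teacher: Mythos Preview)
Your overall framework---independent random colorings followed by a union bound over non-adjacent pairs---is correct and matches the paper's strategy. However, there is a genuine gap in how you handle the properness requirement, and the workarounds you sketch do not close it.

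Starting from a uniform random coloring $f\colon V(G)\to[10k]$ is a dead end: such a coloring is almost surely \emph{not} proper, so the probability that condition~(a) holds is essentially zero, and no amount of conditioning or ``local recoloring'' will salvage a per-pair success probability bounded away from~$0$ without changing the distribution. Your suggestion that ``we do not need $f$ itself to be proper'' contradicts the lemma statement, which explicitly requires each $f_\ell$ to be a proper coloring.

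The fix---which is exactly what the paper does, and which you brush past when you mention ``process vertices from $v_n$ backwards''---is to generate each $f_\ell$ as a random \emph{proper} coloring from the outset. Color the vertices in the order $v_n,v_{n-1},\ldots,v_1$; when coloring $v_s$, delete from $[10k]$ the colors already assigned to the (at most $k$) forward neighbors of $v_s$, and choose uniformly at random from the remaining set of at least $9k$ colors. This procedure always outputs a proper coloring, so condition~(a) is automatic and costs nothing in the probability analysis. For condition~(b), observe that when $v_j$ is colored, each of the at most $k$ forward neighbors of $v_i$ lying after $v_j$ has already received its color; the probability that $v_j$ lands on one of these colors is at most $k/(9k)=1/9$. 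Since $1/9<e^{-2}$, with $r=\lceil\ln n\rceil$ independent colorings the failure probability for a fixed pair is at most $9^{-r}<n^{-2}$, and the union bound over at most $\binom{n}{2}$ pairs finishes the proof.

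In short: do not sample uniformly and then try to repair properness; sample directly from proper colorings via the greedy backward procedure. Once you make this change, the ``technical heart'' you anticipate disappears and the argument becomes a two-line computation.
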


\begin{proof}
	We explain the randomized procedure for constructing the coloring $f_1$ below. Start coloring the vertices from $v_n$ and color them all the way down to $v_1$ in the following way. Assume we have colored the vertices $v_{n}$ to $v_{i+1}$ and are about to color $v_i$. From the set of $10k$ colors, remove the colors that have been assigned to the forward neighbors of $v_i$. This leaves us with a set of at least $9k$ colors. Uniformly at random, choose one color from this set and assign it to $v_i$. This completes our description of the construction of the coloring $f_1$. The procedure ensures that $f_1$ is a proper coloring. Independently, repeat the above procedure to construct the colorings $f_2,f_3, \ldots , f_r$. 
	
	Consider a non-adjacent pair $(v_i, v_j)$, where $i<j$. The probability that $f_1$ is not a desirable coloring for this pair is equal to the probability that  a forward neighbor of $v_i$ that is after $v_j$ in the $k$-degenerate ordering gets the same color as that of $v_j$. This probability is at
	most $k/9k = 1/9$.  Let $A_{i,j}$ denote the bad event that none of the colorings $f_1, f_2,\ldots , f_r$ is a desirable
	coloring for the pair $(v_i,v_j)$. Then, $Pr[A_{i,j}] \leq 1/9^r < 1/n^2$. Applying the union bound, $Pr[\bigcup
	_{v_iv_j \notin E(G),~i<j} A_{i,j}] \leq \sum\limits_{v_iv_j \notin E(G),~i<j}Pr[A_{i,j}] < {n \choose 2}\frac{1}{n^2} < 1$. Thus, the statement of the lemma holds with non-zero probability.
	\hfill\qed
\end{proof}

\begin{theorem}
\label{thm_thresh_dim_degeneracy}
Let $G$ be a $k$-degenerate graph on $n$ vertices. Then, $\dimth(G) \leq 10k\ln n$. 
\end{theorem}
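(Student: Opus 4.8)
The plan is to combine Lemma~\ref{lem_desirable_coloring} with the threshold supergraph construction from Definition~\ref{def:thsupgraph}, exactly in the spirit of the treewidth proof of Section~\ref{sec_treewidth}. First I would invoke Lemma~\ref{lem_desirable_coloring} to obtain the collection of $r = \lceil \ln n \rceil$ proper colorings $f_1, \ldots, f_r$, each using the color palette $[10k]$, with the desirability guarantee for every non-adjacent pair $(v_i, v_j)$ with $i < j$. For each coloring $f_\ell$ and each color class $C_{\ell,c} = f_\ell^{-1}(c)$ (for $c \in [10k]$), I want to build a threshold supergraph of $G$ that "catches" the non-adjacencies that $f_\ell$ is desirable for and where $v_j$ lies in this particular class. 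Since each $C_{\ell,c}$ is an independent set of $G$ (because $f_\ell$ is proper), I can form $\tau(G, C_{\ell,c}, \sigma_{\ell,c})$ where $\sigma_{\ell,c}$ is the restriction to $C_{\ell,c}$ of the $k$-degenerate ordering $v_1, v_2, \ldots, v_n$ (or, to be safe about direction, its reverse — I will check which orientation makes the argument work). By Proposition~\ref{prop:thsupgraph} each such $\tau$ is a threshold graph containing $G$ as a subgraph, and there are $r \cdot 10k = 10k \lceil \ln n \rceil$ of them.

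The core of the argument is to show $G = \bigcap_{\ell \in [r]} \bigcap_{c \in [10k]} \tau(G, C_{\ell,c}, \sigma_{\ell,c})$. One inclusion is immediate since every $\tau$ is a supergraph of $G$. For the other, take a non-adjacent pair; without loss of generality write it as $(v_i, v_j)$ with $i < j$, and pick $\ell$ such that $f_\ell$ is desirable for this pair. Let $c = f_\ell(v_j)$, so $v_j \in C_{\ell,c}$. I claim $v_i v_j \notin E(\tau(G, C_{\ell,c}, \sigma_{\ell,c}))$. In the threshold supergraph, $v_i$ (which is outside $C_{\ell,c}$ if $f_\ell(v_i) \neq c$ — and indeed $f_\ell(v_i) \neq f_\ell(v_j)$ since $f_\ell$ would otherwise not be proper on a... wait, $v_i v_j \notin E(G)$, so properness does not force $f_\ell(v_i)\neq c$; I need to handle the case $v_i \in C_{\ell,c}$ separately, where both endpoints lie in the independent part and hence are non-adjacent in $\tau$ trivially) gets joined only to the prefix $u_1, \ldots, u_{s(v_i)}$ of $C_{\ell,c}$ under $\sigma_{\ell,c}$, where $s(v_i)$ is determined by the latest neighbor of $v_i$ in $C_{\ell,c}$. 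The desirability condition says $f_\ell(v_j) \neq f_\ell(v_t)$ for every neighbor $v_t$ of $v_i$ with $t > j$; hence no neighbor of $v_i$ lying in $C_{\ell,c}$ can come after $v_j$ in the degenerate ordering, so $v_j$ is strictly after the last neighbor of $v_i$ within $C_{\ell,c}$. With the right choice of orientation for $\sigma_{\ell,c}$ (putting later vertices of the degenerate order last, so that $v_i$ connects only to neighbors that precede $v_j$), this forces $v_i v_j \notin E(\tau)$, as desired.

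I expect the main obstacle to be getting the orientation of $\sigma_{\ell,c}$ and the precise meaning of $s(v)$ in Definition~\ref{def:thsupgraph} to line up correctly — in particular making sure that the desirability property, which controls only forward neighbors $v_t$ of $v_i$ with $t > j$, is exactly what certifies that $v_j$ lies beyond $v_i$'s "reach" into the independent class, while also correctly handling the case $f_\ell(v_i) = c$ (both endpoints in the independent set) and the bookkeeping when $v_i$ has no neighbor in $C_{\ell,c}$ at all (then $s(v_i) = 0$ and $v_i$ is isolated from the whole class in $\tau$, so we are trivially fine). Once the intersection identity is established, the theorem follows immediately: $\dimth(G) \leq r \cdot 10k = 10k \lceil \ln n \rceil$, and absorbing the ceiling (or noting $\lceil \ln n \rceil \leq \ln n$ fails, so instead one phrases the bound as $10k \ln n$ rounded appropriately, or simply states $\dimth(G) \le 10k\ln n$ for $n$ large enough and checks small cases) gives the stated bound $\dimth(G) \leq 10k \ln n$.
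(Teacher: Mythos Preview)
Your proposal is correct and essentially identical to the paper's proof: the paper uses the natural (unreversed) restriction $\sigma|_{C_b^a}$ of the $k$-degenerate ordering, which is exactly the orientation that makes the desirability condition force $v_j$ to lie beyond $s(v_i)$ in Definition~\ref{def:thsupgraph}, and it handles the case $f_\ell(v_i)=c$ just as you outline. The ceiling concern you raise ($10k\lceil\ln n\rceil$ versus $10k\ln n$) is glossed over in the paper as well.
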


\begin{proof}
	Let $V(G) = \{v_1, \ldots , v_n\}$ and let $\sigma\colon  v_1, v_2, \ldots, v_n$ be a $k$-degenerate ordering of $G$. Let $\{f_1, \ldots , f_{\lceil \ln n \rceil}\}$ be the collection of proper colorings of $V(G)$, where each coloring uses at most $10k$ colors, given by Lemma~\ref{lem_desirable_coloring}. For each coloring $f_a$, $a \in [\lceil \ln n \rceil]$, and each color $b \in [10k]$, we construct a threshold supergraph $T_{a,b}$ of $G$ as follows. Let $C_b^a = \{v \in V(G)\colon f_a(v) = b\}$. Since $f_a$ is a proper coloring, $C_b^a$ is an independent set. We define $T_{a,b} := \tau(G,C_b^a, \sigma|_{C_b^a})$ (see Definition \ref{def:thsupgraph} and Proposition \ref{prop:thsupgraph}). 
	
	We claim that $G = \bigcap_{a \in [\lceil \ln n \rceil],~b\in [10k]} T_{a,b}$. Since each $T_{a,b}$ is a supergraph, all we need to do is to show that for every non-adjacent pair $(v_i, v_j)$ in $G$, where $i<j$, there is a threshold supergraph in our collection that does not contain the edge $v_iv_j$. Assume $f_a$ is a desirable coloring for $(v_i, v_j)$ and $f_a(v_j) = b$ (Lemma~\ref{lem_desirable_coloring} guarantees that such a coloring exists). Then, we claim that $v_iv_j  \notin E(T_{a,b})$. If $f_a(v_i) = b$, then $v_iv_j  \notin E(T_{a,b})$ as $C_b^a$ is an independent set in $T_{a,b}$. Suppose $f_a(v_i) \neq b$. Since no neighbor $u$ of $v_i$ that is after $v_j$ in the $k$-degenerate ordering has $f_a(u) = b$, all the neighbors of $v_i$ in $C_b^a$ appear before $v_j$ in the ordering $\sigma|_{C_b^a}$. Thus, $v_iv_j  \notin E(T_{a,b})$  . This completes the proof of the theorem. 
	\hfill\qed
\end{proof}

\subsection{Random graphs}
The following lemma was proved in \cite{AdiSunRog}. 
\begin{lemma}[Lemma 12 in \cite{AdiSunRog}]
\label{lem_random_graph_GNP}
For a random graph $G \in \mathcal{G}(n,p)$, where $p = \frac{c}{n-1}$ and $1 \leq c \leq n-1$, $Pr[G \mbox{ is } 4ec
 \mbox{-degenerate}] \geq 1 - \frac{1}{\Omega(n^2)}$. 
\end{lemma}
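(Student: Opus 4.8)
The plan is to combine the standard characterization of degeneracy with a first--moment (union bound) computation. Recall that a graph is $k$-degenerate if and only if every one of its induced subgraphs has a vertex of degree at most $k$; indeed, from such a graph one obtains a $k$-degenerate ordering by repeatedly deleting a minimum--degree vertex and then reversing the order of deletion. Put $d:=\lfloor 4ec\rfloor$. If $4ec\ge n-1$, then $d\ge n-1$ and every $n$-vertex graph is $(n-1)$-degenerate, hence $d$-degenerate; so from now on assume $4ec<n-1$. If $G$ is not $4ec$-degenerate, then some induced subgraph $G[S]$ has minimum degree at least $d+1>4ec$, so (summing degrees) $G[S]$ has more than $2ec\,|S|$ edges, and $|S|\ge d+2=:s_0$. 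Since $c\ge 1$ we have $s_0\ge\lfloor 4e\rfloor+2=12$, and $4ec<n-1$ gives $s_0\le n$.

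First I would bound the probability of such a ``bad'' set by a union bound. In $G\in\mathcal{G}(n,p)$ the number of edges spanned by a fixed $s$-set $S$ is distributed as $\mathrm{Bin}\!\big(\tbinom{s}{2},p\big)$, so with $t:=\lceil 2ecs\rceil\ge 2ecs$,
\[
Pr\big[G\text{ is not }4ec\text{-degenerate}\big]\ \le\ \sum_{s=s_0}^{n}\binom{n}{s}\,Pr\!\left[\mathrm{Bin}\!\big(\tbinom{s}{2},p\big)\ge t\right].
\]
Using $Pr[\mathrm{Bin}(m,p)\ge t]\le\binom{m}{t}p^{t}\le(emp/t)^{t}$ with $m=\tbinom{s}{2}\le s^{2}/2$, $p=c/(n-1)$, and $t\ge 2ecs$, one gets $emp/t\le s/(4(n-1))<1$ (the last inequality since $s\le n$), hence $Pr[\mathrm{Bin}(\tbinom{s}{2},p)\ge t]\le\big(s/(4(n-1))\big)^{t}\le\big(s/(4(n-1))\big)^{2ecs}$. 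Together with $\binom{n}{s}\le(en/s)^{s}$ this yields
\[
Pr\big[G\text{ is not }4ec\text{-degenerate}\big]\ \le\ \sum_{s=s_0}^{n}\beta_s^{\,s},\qquad\text{where}\ \ \beta_s:=\frac{en}{s}\left(\frac{s}{4(n-1)}\right)^{2ec}.
\]

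Then I would show this sum is $O(1/n^{2})$. For $n\ge 4$ we have $n-1\ge 3n/4$, so $\beta_s\le\frac{e}{3^{2ec}}(s/n)^{2ec-1}=:A\,(s/n)^{q}$ with $A=e/3^{2ec}$ and $q=2ec-1\ge 2e-1>4$. Since $e/3<1$ and $s\le n$, the function $s\mapsto s\ln\!\big(A(s/n)^{q}\big)$ has negative derivative $\ln\!\big((e/3)^{2ec}(s/n)^{q}\big)$ on $[s_0,n]$, so $\beta_s^{\,s}\le\big(A(s/n)^{q}\big)^{s}$ is decreasing there, and the sum is at most $n\,\beta_{s_0}^{\,s_0}$. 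For any fixed $c$, $s_0$ is a constant and $\beta_{s_0}=\Theta(n^{1-2ec})$, whence $n\,\beta_{s_0}^{\,s_0}=\Theta\!\big(n^{1+(1-2ec)s_0}\big)=n^{-\Theta(c^{2})}$, which is $o(1/n^{2})$ because $s_0\ge 12$ and $2ec-1>4$; for $c$ growing with $n$ (still with $4ec<n-1$) the same conclusion follows from a routine estimate of $n\,\beta_{s_0}^{\,s_0}$, the very large values of $c$ being already covered by the trivial case. This gives $Pr[G\text{ is }4ec\text{-degenerate}]\ge 1-1/\Omega(n^{2})$.

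The hard part will be this last estimate: one must extract a genuine factor $1/n^{2}$ from the union bound rather than merely a small constant, which is why it is essential to keep $(s/n)$ raised to an exponent proportional to $s$ (and, relatedly, not to be wasteful when replacing $n-1$ by $n$), and to verify the bound uniformly over the whole range $1\le c\le n-1$, with the large-$c$ end absorbed into the trivial observation that $4ec\ge n-1$ already forces $4ec$-degeneracy.
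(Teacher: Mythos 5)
This lemma is imported verbatim from \cite{AdiSunRog} (Lemma 12 there); the present paper gives no proof of it, so there is nothing internal to compare against. Your argument is the standard first-moment proof of such degeneracy statements and is essentially the one used in the cited source: characterize non-$k$-degeneracy by the existence of an induced subgraph of minimum degree exceeding $k$, union-bound over vertex subsets, and control the binomial edge count with $\Pr[\mathrm{Bin}(m,p)\ge t]\le\binom{m}{t}p^t\le(emp/t)^t$. The computations check out: the reduction to $\sum_s \beta_s^s$ is correct, the monotonicity of $s\mapsto \bigl(A(s/n)^q\bigr)^s$ follows from the derivative you compute, and for fixed $c$ the exponent $1+(1-2ec)s_0\le 1-(2e-1)\cdot 12<-50$ indeed gives $o(n^{-2})$.

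Two small points to tighten. First, after passing to the decreasing majorant you should bound the sum by $n\bigl(A(s_0/n)^q\bigr)^{s_0}$ rather than by $n\beta_{s_0}^{s_0}$ (the latter is the \emph{smaller} quantity, so as written the inequality points the wrong way); since both are $\Theta(n^{1-2ec})$ raised to the power $s_0$, the conclusion is unaffected, but the statement should be corrected. Second, the clause ``for $c$ growing with $n$ the same conclusion follows from a routine estimate'' does conceal a genuine case split: when $c$ is large the factor $(s_0/n)^{qs_0}$ may be close to $1$ (as $s_0$ approaches $n$), and one must instead extract the decay from $A^{s_0}\le\exp\bigl(-\Omega(c^2)\bigr)$, which beats $n^{-3}$ only once $c=\Omega(\sqrt{\ln n})$; for $c=O(\sqrt{\ln n})$ one falls back on $(s_0/n)^{qs_0}\le n^{-(1-o(1))qs_0/2}$. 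Both regimes work, so the proof is correct, but the uniformity over $1\le c\le n-1$ deserves the explicit two-case treatment rather than a wave of the hand.
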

Applying Lemma  \ref{lem_random_graph_GNP} and Theorem \ref{thm_thresh_dim_degeneracy}, we get the following lemma.
\begin{lemma}
\label{lem_thdim_random_graph_GNP}
For a random graph $G \in \mathcal{G}(n,p)$, where $p = \frac{c}{n-1}$ and $1 \leq c \leq n-1$, $Pr[\dimth(G) \in O(c\ln n)] \geq 1 - \frac{1}{\Omega(n^2)}$. 
\end{lemma}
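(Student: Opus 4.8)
The plan is to combine the two results quoted immediately above in the most direct way. First I would fix a random graph $G\in\mathcal{G}(n,p)$ with $p=\frac{c}{n-1}$ and $1\leq c\leq n-1$. By Lemma~\ref{lem_random_graph_GNP}, with probability at least $1-\frac{1}{\Omega(n^2)}$ the graph $G$ is $4ec$-degenerate; call this good event $\mathcal{E}$. Conditioned on $\mathcal{E}$, Theorem~\ref{thm_thresh_dim_degeneracy} applies with $k=4ec$, giving $\dimth(G)\leq 10\cdot 4ec\cdot\ln n = 40ec\ln n \in O(c\ln n)$. Hence whenever $\mathcal{E}$ holds we have $\dimth(G)\in O(c\ln n)$, and since $\Pr[\mathcal{E}]\geq 1-\frac{1}{\Omega(n^2)}$, the same bound on the probability transfers verbatim to the event $\{\dimth(G)\in O(c\ln n)\}$, which contains $\mathcal{E}$. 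This is the entire argument.

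The only mild subtlety worth a sentence is the meaning of the asymptotic statement: Theorem~\ref{thm_thresh_dim_degeneracy} is a clean deterministic inequality (no hidden $n\to\infty$ assumptions beyond $\ln n$ making sense), so the constant absorbed into the $O(\cdot)$ is an absolute constant ($40e$ suffices), not something depending on $c$ or $n$. Thus there is no interplay between the randomness and the asymptotics to worry about — the probabilistic part lives entirely in Lemma~\ref{lem_random_graph_GNP}, and the bound on $\dimth$ is purely a consequence of degeneracy. I would also note that the hypothesis $1\leq c\leq n-1$ is exactly the range in which Lemma~\ref{lem_random_graph_GNP} is stated, so it is inherited without extra work.

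There is essentially no obstacle here: the lemma is a formal corollary obtained by chaining Lemma~\ref{lem_random_graph_GNP} and Theorem~\ref{thm_thresh_dim_degeneracy}, with the failure probability carried through unchanged because the bad event for $\dimth$ is a subset of the bad event for non-$4ec$-degeneracy. If anything merits care, it is simply making explicit that $\{\dimth(G)\notin O(c\ln n)\}\subseteq\{G\text{ is not }4ec\text{-degenerate}\}$, so $\Pr[\dimth(G)\in O(c\ln n)]\geq\Pr[G\text{ is }4ec\text{-degenerate}]\geq 1-\frac{1}{\Omega(n^2)}$, which is precisely the claimed bound.
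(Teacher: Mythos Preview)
Your proposal is correct and is exactly the approach taken in the paper: the lemma is stated there as an immediate consequence of Lemma~\ref{lem_random_graph_GNP} and Theorem~\ref{thm_thresh_dim_degeneracy}, with no further argument given. Your write-up simply makes explicit the one-line chaining (and the containment of bad events) that the paper leaves implicit.
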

It is known that (see page 35 of \cite{bollobas})
\begin{eqnarray}
P_m(Q) & \leq & 3\sqrt{m}P_p(Q) \label{Eqn_GNP_GNM} 
\end{eqnarray}
where (i) $Q$ is a property of graphs of order $n$, (ii) $P_m(Q)$ is the probability that Property Q is satisfied by a graph $G \in \mathcal{G}(n,m)$, and (iii) $P_p(Q)$ is the probability that Property Q is satisfied by a graph $G \in \mathcal{G}(n,p)$ with $p = \frac{m}{{n \choose 2}} = \frac{2m/n}{n-1}$. Assume $m \geq n/2$. Then, $p = \frac{2m/n}{n-1} \geq \frac{1}{n-1}$ and by Lemma \ref{lem_thdim_random_graph_GNP}, $Pr[\dimth(G) \notin O(\frac{2m}{n}\ln n)] \leq \frac{1}{\Omega(n^2)}$. Applying Equation \ref{Eqn_GNP_GNM}, for a random graph $G \in \mathcal{G}(n,m)$,  $m \geq n/2$, $Pr[\dimth(G) \notin O(\frac{2m}{n}\ln n)] \leq \frac{3\sqrt{m}}{\Omega(n^2)} \leq \frac{1}{\Omega(n)}$. We thus have the following theorem.  
\begin{theorem}
\label{thm_random_graph_GNM}
For a random graph $G \in \mathcal{G}(n,m)$, $m \geq n/2$, $Pr[\dimth(G) \in O(\frac{2m}{n}\ln n)] \geq 1 - \frac{1}{\Omega(n)}$. In other words, $Pr[\dimth(G) \in O(d_{av}\ln n)] \geq 1 - \frac{1}{\Omega(n)}$, where $d_{av}$ denotes the average degree of $G$. 
\end{theorem}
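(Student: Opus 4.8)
The plan is to transfer the bound from the $\mathcal{G}(n,p)$ model, where Lemma~\ref{lem_thdim_random_graph_GNP} already gives exactly what we want, to the $\mathcal{G}(n,m)$ model via the standard comparison inequality~\eqref{Eqn_GNP_GNM} (from~\cite{bollobas}). First I would fix an integer $m$ with $n/2 \le m \le \binom{n}{2}$ and set $c = \frac{2m}{n}$, so that $p := \frac{c}{n-1} = \frac{2m/n}{n-1} = \frac{m}{\binom{n}{2}}$ is precisely the edge probability matching the expected edge count. The condition $m \ge n/2$ gives $c \ge 1$, and $m \le \binom{n}{2}$ gives $c = \frac{2m}{n} \le n-1$, so the hypothesis $1 \le c \le n-1$ of Lemma~\ref{lem_thdim_random_graph_GNP} is satisfied. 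Hence, for $G \in \mathcal{G}(n,p)$, $Pr[\dimth(G) \notin O(\tfrac{2m}{n}\ln n)] \le \frac{1}{\Omega(n^2)}$; equivalently, the graph property $Q$, namely ``$\dimth(G) \notin O(\tfrac{2m}{n}\ln n)$'', has $P_p(Q) \le \frac{1}{\Omega(n^2)}$.

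Next I would apply~\eqref{Eqn_GNP_GNM} to this property $Q$ to obtain $P_m(Q) \le 3\sqrt{m}\, P_p(Q) \le \frac{3\sqrt{m}}{\Omega(n^2)}$. Since $m \le \binom{n}{2} < n^2$, we have $\sqrt{m} < n$, so the right-hand side is at most $\frac{3n}{\Omega(n^2)} = \frac{1}{\Omega(n)}$. Therefore, for $G \in \mathcal{G}(n,m)$ with $m \ge n/2$, $Pr[\dimth(G) \in O(\tfrac{2m}{n}\ln n)] = 1 - P_m(Q) \ge 1 - \frac{1}{\Omega(n)}$, which is the claimed bound. The restatement in terms of $d_{av}$ is then immediate: in any graph with $m$ edges the degree sum is $2m$, so the average degree is $d_{av} = \frac{2m}{n}$.

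The one subtlety worth flagging is that the $O(\cdot)$ appearing inside the property $Q$ must be read as a concrete statement about a graph of order $n$ --- namely ``$\dimth(G) \le C\cdot \frac{2m}{n}\ln n$'' for the absolute constant $C$ hidden in Lemma~\ref{lem_thdim_random_graph_GNP} (which itself descends from Lemma~\ref{lem_random_graph_GNP} and Theorem~\ref{thm_thresh_dim_degeneracy}) --- so that $Q$ is a genuine graph property of order-$n$ graphs to which~\eqref{Eqn_GNP_GNM} applies; crucially $C$ does not depend on $n$ or $m$. Beyond this bookkeeping I expect no real obstacle: all the substantive content lies in Lemma~\ref{lem_random_graph_GNP}, Theorem~\ref{thm_thresh_dim_degeneracy}, and the Bollob\'as comparison bound, and the present theorem is simply their assembly.
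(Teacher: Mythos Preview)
Your proposal is correct and follows essentially the same route as the paper: set $p=\frac{m}{\binom{n}{2}}=\frac{2m/n}{n-1}$, check that $m\ge n/2$ puts $c=2m/n$ in the range required by Lemma~\ref{lem_thdim_random_graph_GNP}, and then transfer the $\mathcal{G}(n,p)$ bound to $\mathcal{G}(n,m)$ via the Bollob\'as inequality~\eqref{Eqn_GNP_GNM}, using $\sqrt{m}<n$ to turn $3\sqrt{m}/\Omega(n^2)$ into $1/\Omega(n)$. Your extra care in verifying $c\le n-1$ and in reading the $O(\cdot)$ as a concrete threshold (so that $Q$ is a genuine graph property) is well placed and only makes the argument cleaner.
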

\subsection{Graphs of high girth}
The \emph{girth} of a graph is the length of a smallest cycle in it. We assume that if the graph is acyclic, then its girth is $\infty$. We apply Theorem \ref{thm_thresh_dim_degeneracy} to prove an upper bound for the threshold dimension of a graph in terms of its girth and the number of vertices. The following lemma was proved in \cite{majumder2021local}.
\begin{lemma}[Lemma 23 in \cite{majumder2021local}]
\label{lem_girth_degeneracy}
Let $G$ be a graph on $n$ vertices having girth greater than $g+1$. Then, $G$ is $k$-degenerate, where $k= \lceil n^{\frac{1}{\lfloor g/2 \rfloor }}\rceil$.
\end{lemma}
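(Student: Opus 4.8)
The plan is to use the classical Moore-bound counting argument. First I would invoke the standard characterization of degeneracy: a graph is $k$-degenerate if and only if every (non-empty) subgraph of it contains a vertex of degree at most $k$. Since deleting vertices or edges cannot shorten a cycle, every subgraph $H$ of $G$ again has girth greater than $g+1$. Hence it suffices to prove a Moore-type lower bound: any graph $H$ with girth greater than $g+1$ and minimum degree $\delta$ has so many vertices that the constraint $|V(H)|\le n$ forces $\delta\le\lceil n^{1/\lfloor g/2\rfloor}\rceil$.

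Next I would carry out the BFS layering. Write $t=\lfloor g/2\rfloor$ (we may assume $g\ge 2$, so $t\ge 1$); note that girth $>g+1$ forces girth $\ge 2t+2$ since $g\ge 2t$. Fix $v\in V(H)$ and let $N_i$ be the set of vertices at distance exactly $i$ from $v$. For $i\le t$ the girth condition forbids any edge inside a layer $N_i$ (such an edge would close a cycle of length at most $2i+1\le 2t+1<\mathrm{girth}$) and forbids any vertex of $N_i$ from having two neighbours in $N_{i-1}$ (cycle of length at most $2i\le 2t<\mathrm{girth}$). Consequently the layers $N_0,\dots,N_t$ induce a tree in $H$: $|N_1|\ge\delta$, and for $1\le i\le t-1$ each vertex of $N_i$ sends its remaining $\ge\delta-1$ neighbours into $N_{i+1}$, these neighbour sets being pairwise disjoint, so $|N_{i+1}|\ge(\delta-1)|N_i|$ and hence $|N_i|\ge\delta(\delta-1)^{i-1}$ for $1\le i\le t$. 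Summing the layers gives $n\ge|V(H)|\ge 1+\sum_{i=1}^{t}\delta(\delta-1)^{i-1}$.

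Finally I would extract the stated bound by elementary arithmetic. If $\delta\le 2$ there is nothing to do, since $\lceil n^{1/t}\rceil\ge 2$ whenever $n\ge 2$ (and $n=1$ is trivial), so $H$ already contains a vertex of degree at most $\lceil n^{1/t}\rceil$. If $\delta\ge 3$, then $\sum_{i=1}^{t}\delta(\delta-1)^{i-1}=\delta\cdot\frac{(\delta-1)^t-1}{\delta-2}=(\delta-1)^t-1+\frac{2\bigl((\delta-1)^t-1\bigr)}{\delta-2}$, so $n\ge(\delta-1)^t+\frac{2\bigl((\delta-1)^t-1\bigr)}{\delta-2}>(\delta-1)^t$. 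Thus if $\delta\ge\lceil n^{1/t}\rceil+1$, i.e.\ $\delta-1\ge\lceil n^{1/t}\rceil\ge n^{1/t}$, then $(\delta-1)^t\ge n$, contradicting $n>(\delta-1)^t$. Hence $\delta\le\lceil n^{1/t}\rceil$; as this holds for the minimum degree of every subgraph of $G$, the graph $G$ is $\lceil n^{1/\lfloor g/2\rfloor}\rceil$-degenerate.

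The main obstacle is the bookkeeping needed to obtain exactly $\lceil n^{1/\lfloor g/2\rfloor}\rceil$ rather than a bound weaker by an additive constant or a constant factor. One must be careful, for each parity of $g$, about precisely which short cycles the girth hypothesis excludes — the point being that girth $>g+1$ leaves exactly enough room for a depth-$t$ BFS tree with $t=\lfloor g/2\rfloor$ — and one must squeeze the geometric sum $1+\delta+\delta(\delta-1)+\cdots$ so that it strictly exceeds $(\delta-1)^t$, so that the ceiling comes out right, while separately disposing of the degenerate cases $\delta\le 2$ and small $n$.
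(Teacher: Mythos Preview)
Your proof is correct: the Moore-bound BFS counting from a root to depth $t=\lfloor g/2\rfloor$ is exactly the right tool, and your arithmetic cleanly yields the strict inequality $n>(\delta-1)^t$ needed for the ceiling to land on $\lceil n^{1/t}\rceil$. Note, however, that the present paper does not supply its own proof of this lemma; it merely quotes the statement from~\cite{majumder2021local} and uses it as a black box. The argument you give is the standard one and is essentially what the cited source does as well, so there is nothing substantive to compare.
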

Applying the above lemma, we get the following corollary to Theorem \ref{thm_thresh_dim_degeneracy}.
\begin{corollary}
\label{cor_degeneracy_girth}
Let $G$ be a graph on $n$ vertices with girth greater than $g+1$. Then, $\dimth(G) \leq 10\lceil n^{\frac{1}{\lfloor g/2 \rfloor }}\rceil \ln n$. 
\end{corollary}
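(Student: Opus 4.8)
The final statement is Corollary~\ref{cor_degeneracy_girth}, which follows almost immediately by combining two results that appear earlier in the excerpt, so my plan is essentially to chain them together. The plan is to first invoke Lemma~\ref{lem_girth_degeneracy}: since $G$ has $n$ vertices and girth greater than $g+1$, that lemma tells us $G$ is $k$-degenerate with $k = \lceil n^{1/\lfloor g/2 \rfloor} \rceil$. Then I would feed this value of $k$ directly into Theorem~\ref{thm_thresh_dim_degeneracy}, which states that a $k$-degenerate graph on $n$ vertices has $\dimth(G) \leq 10k \ln n$. Substituting, we get $\dimth(G) \leq 10 \lceil n^{1/\lfloor g/2 \rfloor} \rceil \ln n$, which is exactly the claimed bound.

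The only point that requires the tiniest bit of care is that Theorem~\ref{thm_thresh_dim_degeneracy} is phrased for a graph that is $k$-degenerate, where $k$ is a fixed parameter, and I am plugging in a specific $k$; there is nothing to check here beyond noting that if $G$ is $k$-degenerate it is also $k'$-degenerate for any $k' \geq k$, and in any case Lemma~\ref{lem_girth_degeneracy} hands us an explicit $k$ for which $G$ is $k$-degenerate, so the hypothesis of Theorem~\ref{thm_thresh_dim_degeneracy} is met verbatim with that $k$. So there is really no obstacle at all: this is a two-line corollary, and I would simply write out the substitution.

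If I wanted to say slightly more about where the content lies, it is of course hidden inside the two cited results — Lemma~\ref{lem_girth_degeneracy} is the girth-to-degeneracy tradeoff (a Moore-bound-type counting argument), and Theorem~\ref{thm_thresh_dim_degeneracy} is the randomized desirable-coloring construction of Lemma~\ref{lem_desirable_coloring} followed by taking threshold supergraphs $\tau(G, C_b^a, \sigma|_{C_b^a})$ over all colorings $f_a$ and all colors $b$. But for the purposes of proving Corollary~\ref{cor_degeneracy_girth} itself, both of these are black boxes, and the proof is the one-sentence composition described above.

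\begin{proof}
By Lemma~\ref{lem_girth_degeneracy}, since $G$ is a graph on $n$ vertices with girth greater than $g+1$, $G$ is $k$-degenerate for $k = \lceil n^{\frac{1}{\lfloor g/2 \rfloor}} \rceil$. Applying Theorem~\ref{thm_thresh_dim_degeneracy} to this $k$-degenerate graph $G$ on $n$ vertices, we obtain $\dimth(G) \leq 10k \ln n = 10 \lceil n^{\frac{1}{\lfloor g/2 \rfloor}} \rceil \ln n$.\hfill\qed
\end{proof}
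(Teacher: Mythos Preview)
Your proof is correct and is essentially identical to the paper's own argument: the paper simply states that the corollary follows by applying Lemma~\ref{lem_girth_degeneracy} and Theorem~\ref{thm_thresh_dim_degeneracy}, which is exactly the two-step composition you wrote out.
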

The bipartite graph $G$ obtained by removing a perfect matching from the complete bipartite graph $K_{n,n}$ is known to have a boxicity of $\frac{n}{2}$. From Observation~\ref{obv_thdim_box} and by applying Corollary~\ref{cor_degeneracy_girth} with $g=2$, we have $\frac{n}{2} \leq \dimth(G) = O(n\ln n)$. Thus, we cannot expect to get an upper bound of $O(n^{\alpha/g})$,  with $\alpha < 2$, for the threshold dimension of a graph with girth greater than $g+1$. 

\section{Threshold dimension and minimum vertex cover}
\label{sec:thdim_MVC}
A \emph{vertex cover} of $G$ is a set of vertices $S \subseteq V(G)$ such  that $\forall e \in E(G)$, at least one endpoint of $e$ is in $S$. A \emph{minimum vertex cover} of $G$ is a vertex cover of $G$ of the smallest cardinality. We use $\beta(G)$ to denote the cardinality of a minimum vertex cover. In this section, we prove a tight upper bound for the threshold dimension of a graph in terms of the size of its minimum vertex cover. 

\begin{proposition}
\label{prop:th_dim_MVC}
For a graph $G$, $\dimth(G) \leq \beta(G)$.
\end{proposition}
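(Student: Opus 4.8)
The plan is to use a minimum vertex cover $S = \{w_1, w_2, \ldots, w_t\}$ of $G$, where $t = \beta(G)$, and produce one threshold graph per vertex of $S$. Note that $I := V(G) \setminus S$ is an independent set of $G$, since $S$ is a vertex cover. For each $\ell \in [t]$, I would build a threshold supergraph of $G$ that "handles" all non-edges incident to $w_\ell$; since every edge of $G$ has at least one endpoint in $S$, every non-edge whose both endpoints lie in $I$ also needs handling, but in fact every such non-edge is automatically handled because $I$ will be an independent set in each of the threshold graphs we construct.

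Concretely, for each $\ell \in [t]$, consider the independent set $A_\ell := (I \cup \{w_\ell\}) \setminus N_G[w_\ell]$. Wait — this need not be independent. Instead, the cleaner route is: for each $\ell$, let $A_\ell = \{w_\ell\} \cup (I \setminus N_G(w_\ell))$; this is independent because $I$ is independent and $w_\ell$'s only neighbours outside $I$ are in $S$. Now fix any ordering $\sigma_\ell$ of $A_\ell$ in which $w_\ell$ comes last, and set $G_\ell := \tau(G, A_\ell, \sigma_\ell)$, which is a threshold graph containing $G$ as a subgraph by Proposition~\ref{prop:thsupgraph}. The key claim is $G = \bigcap_{\ell=1}^t G_\ell$. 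Since each $G_\ell \supseteq G$, it suffices to show every non-edge $uv \notin E(G)$ is a non-edge in some $G_\ell$. If $u,v \in I$, then $u,v$ both lie in $A_1$ (say) and $A_1$ is independent in $G_1$, so $uv \notin E(G_1)$. Otherwise, at least one of $u,v$, say $u$, lies in $S$, say $u = w_\ell$; then $v \notin N_G(w_\ell)$, so $v \in A_\ell$ if $v \in I$, or $v \in S$. In the threshold supergraph $\tau(G, A_\ell, \sigma_\ell)$, the vertex $w_\ell$ is the last element of the independent set $A_\ell$, so by Definition~\ref{def:thsupgraph} the only vertices adjacent to $w_\ell$ are its $G$-neighbours together with vertices $z \in V(G) \setminus A_\ell$ having $s(z) = |A_\ell|$, i.e. $z$ adjacent (in $G$) to $w_\ell$. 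Either way a non-neighbour of $w_\ell$ stays a non-neighbour. So $uv \notin E(G_\ell)$, as required. The one case to double-check is $u = w_\ell$, $v = w_{\ell'}$ with both in $S$ and $w_\ell w_{\ell'} \notin E(G)$: then $w_{\ell'} \notin A_\ell$ and $w_{\ell'} \notin N_G(w_\ell)$, so in $\tau(G,A_\ell,\sigma_\ell)$ the vertex $w_{\ell'}$ is a clique-side vertex with $s(w_{\ell'}) < |A_\ell|$, hence not adjacent to the last vertex $w_\ell$ of $A_\ell$; good.

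The main obstacle is purely bookkeeping: making sure the chosen ordering places $w_\ell$ last so that $w_\ell$'s neighbourhood in the threshold graph is exactly $N_G(w_\ell)$ (intersected appropriately), and verifying that clique-side vertices of $A_\ell$ — i.e. the vertices of $S \setminus \{w_\ell\}$ — do not accidentally become adjacent to the non-neighbours of $w_\ell$ we are trying to separate. This is handled by the "$w_\ell$ last" choice together with the precise definition of $s(\cdot)$. Since there are $t = \beta(G)$ graphs $G_\ell$, we conclude $\dimth(G) \le \beta(G)$. \hfill\qed
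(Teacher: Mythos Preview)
Your argument has a genuine gap in the case $u,v\in I$. You assert that ``$u,v$ both lie in $A_1$'', but $A_\ell=\{w_\ell\}\cup(I\setminus N_G(w_\ell))$ omits every vertex of $I$ that is adjacent to $w_\ell$. So if $u\in N_G(w_\ell)$ then $u\notin A_\ell$; worse, such a $u$ lands on the clique side $B_\ell$ of $G_\ell$, and since $u$ is adjacent to the last vertex $w_\ell$ of $\sigma_\ell$, we get $s(u)=|A_\ell|$, making $u$ universal in $G_\ell$. Hence your construction only kills the non-edge $uv$ if there is some $\ell$ with \emph{both} $u,v\notin N_G(w_\ell)$, and no such $\ell$ need exist.

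A concrete counterexample: take $G=K_4-e$ on $\{u,v,w_1,w_2\}$ with the single missing edge $uv$. Then $S=\{w_1,w_2\}$ is a minimum vertex cover, $I=\{u,v\}$, and $N_G(w_1)=N_G(w_2)\supseteq\{u,v\}$, so $A_1=\{w_1\}$ and $A_2=\{w_2\}$. In both $G_1$ and $G_2$ the pair $u,v$ sits on the clique side, giving $uv\in E(G_1)\cap E(G_2)$ while $uv\notin E(G)$.

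The paper fixes this by treating one cover vertex specially: for $\ell=1,\dots,b-1$ it takes the singleton independent set $\{w_\ell\}$ (so $G_\ell$ kills exactly the non-edges incident to $w_\ell$), and for $\ell=b$ it takes the \emph{entire} independent set $I$ with an ordering that puts $N_G(w_b)\cap I$ first. That last graph simultaneously kills all $I$--$I$ non-edges (since $I$ is its independent side) and all non-edges between $w_b$ and $I$. Your other two cases (one endpoint $w_\ell$, the other in $I$ or in $S$) are argued correctly; only the $I$--$I$ case needs this repair.
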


\begin{proof}
	Let $B$ denote a minimum vertex cover of $G$, and $b := |B| = \beta(G)$. Then, $A := V(G) \setminus B$ is a maximum independent set in $G$. Let $B = \{v_1, v_2, \ldots , v_b\}$. For each $i \in [b-1]$, we construct threshold supergraph $G_i:= \tau(G,\{v_i\},\sigma_i)$, where $\sigma_i$ denotes the trivial ordering of the vertex inside the singleton set $\{v_i\}$. To construct the last threshold supergraph $G_b$, let $\pi_b$ be an ordering of the  vertices of $A$ where every vertex in $N_G(v_b) \cap A$ appear before every vertex in $A \setminus N_G(v_{b})$. We define $G_{b}:=\tau(G,A,\pi_b)$. We claim that $G = \bigcap_{i=1}^b G_i$. We know from our construction that every $G_i$ is a supergraph of $G$. Suppose $xy \notin E(G)$, for some $x,y \in V(G)$. If $x,y \in A$, then $xy \notin E(G_b)$. Assume at least one of $x$ or $y$ belongs to $B$. If $x = v_i$ or $y = v_i$, for some $i<b$, then $xy \notin E(G_i)$. We are left with the case when $x = v_b$ and $y \in A$ (or vice versa). In this case, it can be verified that $xy \notin E(G_b)$. 
	\hfill\qed
\end{proof}

Since $\alpha(G) = |V(G)| - \beta(G)$, by combining Corollary \ref{thm_follows_from_thr_cov}(a) with Proposition \ref{prop:th_dim_MVC}, we get the following theorem. 
\begin{theorem}
\label{thm:thrdim_indep_clique}
For a graph $G$ on $n$ vertices, $\dimth(G) \leq  n-\max\{\omega(G),\alpha(G)\}$. 
\end{theorem}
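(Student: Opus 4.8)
The plan is to derive Theorem~\ref{thm:thrdim_indep_clique} by combining the two bounds already in hand: Proposition~\ref{prop:th_dim_MVC}, which says $\dimth(G) \leq \beta(G)$, and Corollary~\ref{thm_follows_from_thr_cov}(a), which says $\dimth(G) \leq n - \omega(G)$. Since both are upper bounds on the same quantity $\dimth(G)$, we are free to take the minimum of the two, so $\dimth(G) \leq \min\{\beta(G),\, n - \omega(G)\}$, and the whole task reduces to rewriting this minimum in the claimed form $n - \max\{\omega(G),\alpha(G)\}$.

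The key identity to invoke is the Gallai complementation relation $\alpha(G) + \beta(G) = n$, i.e.\ the complement of a maximum independent set is a minimum vertex cover and vice versa; this is exactly the fact already used at the start of the proof of Proposition~\ref{prop:th_dim_MVC} (``$A := V(G)\setminus B$ is a maximum independent set''). Using $\beta(G) = n - \alpha(G)$, the first bound becomes $\dimth(G) \leq n - \alpha(G)$. Now I would simply observe that
\[
\min\{n - \alpha(G),\, n - \omega(G)\} = n - \max\{\alpha(G),\, \omega(G)\},
\]
which is immediate since $x \mapsto n - x$ is order-reversing. This yields $\dimth(G) \leq n - \max\{\omega(G),\alpha(G)\}$, as desired.

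There is essentially no obstacle here — the statement is a one-line corollary of results proved just above it, and the only ``content'' is the bookkeeping identity $\alpha(G) = n - \beta(G)$. If anything, the mild point to be careful about is making sure that the graph in question genuinely has the same $n$ in both bounds (it does, both refer to $|V(G)| = n$) and that Corollary~\ref{thm_follows_from_thr_cov}(a) is stated for an arbitrary graph on $n$ vertices (it is). So the proof I would write is just: ``By Proposition~\ref{prop:th_dim_MVC} and the fact that $\beta(G) = n - \alpha(G)$, we have $\dimth(G) \leq n - \alpha(G)$; combining with Corollary~\ref{thm_follows_from_thr_cov}(a), $\dimth(G) \leq \min\{n-\alpha(G), n-\omega(G)\} = n - \max\{\alpha(G),\omega(G)\}$.''
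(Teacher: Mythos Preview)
Your proposal is correct and matches the paper's own argument essentially verbatim: the paper simply notes that $\alpha(G)=|V(G)|-\beta(G)$ and combines Proposition~\ref{prop:th_dim_MVC} with Corollary~\ref{thm_follows_from_thr_cov}(a). There is nothing to add or change.
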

In Ramsey theory, $R(k,k)$ denotes the smallest positive integer $n$ such that every graph on $n$ vertices has either an independent set of size $k$ or a clique of size $k$. It is known due to \cite{conlon2009new} that $R(k,k) \leq k^{\frac{-c\ln k}{\ln\ln k}} 4^k$, where $c$ is a constant. This implies that for sufficiently large $n$, every graph on $n$ vertices has either an independent set or a clique (or both) of size $0.72\ln n$. This gives us the following corollary. 
\begin{corollary}    
\label{cor:uppboundn}
When $n$ is sufficiently large, a graph $G$ on $n$ vertices satisfies $\dimth(G) \leq n - 0.72\ln n$.
\end{corollary}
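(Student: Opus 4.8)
The plan is to combine the just-proved inequality $\dimth(G) \leq \beta(G) = n - \alpha(G)$ from Proposition~\ref{prop:th_dim_MVC}, its complementary counterpart $\dimth(G) \leq n - \omega(G)$ from Corollary~\ref{thm_follows_from_thr_cov}(a), and the quantitative Ramsey bound $R(k,k) \leq k^{-c\ln k/\ln\ln k} 4^k$ of Conlon~\cite{conlon2009new}. Taking the best of the two upper bounds gives $\dimth(G) \leq n - \max\{\alpha(G), \omega(G)\}$, which is exactly Theorem~\ref{thm:thrdim_indep_clique}; so the only remaining work is to show that every sufficiently large graph $G$ on $n$ vertices has $\max\{\alpha(G), \omega(G)\} \geq 0.72\ln n$.

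First I would set $k = \lceil 0.72 \ln n\rceil$ and observe that it suffices to prove $n \geq R(k,k)$ for all sufficiently large $n$, since then $G$ is forced to contain either an independent set or a clique of size $k \geq 0.72\ln n$. Using Conlon's bound, $R(k,k) \leq 4^k \cdot k^{-c\ln k/\ln\ln k}$; since the factor $k^{-c\ln k/\ln\ln k}$ is at most $1$ for $k$ large, it is enough to check $n \geq 4^k$, i.e. $n \geq 4^{0.72\ln n + 1}$. Taking logarithms, $4^{0.72\ln n} = e^{0.72\ln 4 \cdot \ln n} = n^{0.72\ln 4}$, and since $0.72 \ln 4 \approx 0.9982 < 1$, we have $n^{0.72\ln 4} = o(n)$, so the inequality $n \geq 4 \cdot n^{0.72\ln 4}$ holds for all sufficiently large $n$. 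Hence $\max\{\alpha(G),\omega(G)\} \geq 0.72\ln n$ eventually, and plugging into Theorem~\ref{thm:thrdim_indep_clique} yields $\dimth(G) \leq n - 0.72\ln n$.

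The main (and really only) obstacle is the numerical verification that the constant $0.72$ is admissible, i.e. that $0.72 \cdot \ln 4 < 1$ strictly, so that the polynomial term $n^{0.72\ln 4}$ is genuinely sublinear and the extra multiplicative constant $4$ (from the ceiling in the definition of $k$) and the polynomially small correction factor from Conlon's bound can be absorbed for large $n$; since $\ln 4 = 1.386\ldots$ and $0.72 \times 1.386\ldots = 0.998\ldots < 1$, this goes through with a small margin to spare. Everything else is a direct citation of results already established in the excerpt.
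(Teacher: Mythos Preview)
Your proposal is correct and follows essentially the same route as the paper: combine Theorem~\ref{thm:thrdim_indep_clique} with the Ramsey bound cited from Conlon~\cite{conlon2009new} to force $\max\{\alpha(G),\omega(G)\}\geq 0.72\ln n$ for large $n$. The paper states the Ramsey consequence without working out the numerics, whereas you spell out the verification that $0.72\ln 4<1$; note that since you immediately discard the factor $k^{-c\ln k/\ln\ln k}$, your argument in fact only uses the classical Erd\H{o}s--Szekeres bound $R(k,k)\leq 4^k$, so Conlon's refinement is not actually needed for the constant $0.72$.
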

\subsubsection{Tightness of the bound in Theorem \ref{thm:thrdim_indep_clique}}
It can be verified that the graph $H$ on $2n$ vertices having threshold dimension $n$ constructed in Example \ref{example:treewidth_MVC} satisfies $\alpha(H) = \omega(H) = \beta(H)  = n$. Hence, the bounds in Theorem \ref{thm:thrdim_indep_clique} and Proposition~\ref{prop:th_dim_MVC} are tight.  
\section*{Acknowledgment} We thank Karteek Sreenivasaiah for helpful discussions and the anonymous reviewers for their valuable suggestions.
\bibliographystyle{plain}

\end{document}